\documentclass[10pt]{article}

\usepackage{amssymb,amsthm,bbm}
\usepackage[centertags]{amsmath}
\usepackage{mathabx}
\usepackage{MnSymbol}
\usepackage[mathscr]{euscript}
\usepackage{tikz-cd}
\usepackage{hyperref}

%
%

\newtheorem{thm}{Theorem}[subsection]
\newtheorem{theorem}[thm]{Theorem}

\newtheorem{lemma}[thm]{Lemma}

\newtheorem{proposition}[thm]{Proposition}
\newtheorem{cor}[thm]{Corollary}

\theoremstyle{definition}

\newtheorem{definition}[thm]{Definition}

\newtheorem{example}[thm]{Example}

\newtheorem{remark}[thm]{Remark}

\newtheorem{construction}[thm]{Construction}

\newcommand{\Fin}{\mathscr{F}in}
\newcommand{\Snexc}{[\mathscr{C}, \mathscr{S}]^{(n)}}

\newcommand{\exc}[1]{^{(#1)}}		

\newcommand{\slice}[1]{_{/#1}}	

\newcommand{\gap}[2]{(#1, #2)}                 
\newcommand{\cogap}[2]{\lfloor #1, #2 \rfloor} 

\usepackage{rotating}
\newcommand{\sfGamma}{\mathsf{\Gamma}}

\newcommand{\cogapcube}[1]{\sfGamma(#1)}   

\newcommand{\pp}{\square}             

\newcommand{\ph}[2]{\langle #1, #2 \rangle}                
\newcommand{\intph}[2]{\llangle #1, #2 \rrangle}           

\newcommand{\fwd}[2]{\{#1,#2\}}                            

\renewcommand{\lim}{\operatorname*{lim}}%

\newcommand{\lto}{\leftarrow}				
\newcommand{\ini}{0}				
\newcommand{\term}{1}				

\newcommand{\intperp}{{\begin{sideways}$\!\Vdash$\end{sideways}}}		
\newcommand{\iperp}{\ \intperp\ }								
\newcommand{\relintperp}[1]{\intperp_{#1}}						
\newcommand{\relperp}[1]{\perp_{#1}}							
\newcommand{\reliperp}[1]{\ \intperp_{#1}\ }						
\newcommand{\sperp}{\upModels}								

\newcommand{\map}[2]{[#1, #2]}

\newcommand{\intmap}[2]{\lsem #1, #2 \rsem}
\newcommand{\relintmap}[3]{\lsem #2, #3 \rsem_{#1}}

\newcommand{\scr}[1]{\ensuremath{\mathscr{#1}}}
\newcommand{\Cat}[1]{\ensuremath{\mathscr{#1}}}

\newcommand{\pbmark}{\ar[dr, phantom, "\ulcorner" very near start, shift right=1ex]}
\newcommand{\pomark}{\ar[ul, phantom, "\lrcorner" very near start, shift right=1ex]}

\newcommand{\ul}{\underline}
\newcommand{\op}{^\mathrm{op}}
\newcommand{\join}{\star}

\newcommand{\surj}{\twoheadrightarrow}

\newcommand{\tto}{\longrightarrow}

\DeclareMathOperator*{\holim}{lim}
\DeclareMathOperator*{\colim}{colim}
\DeclareMathOperator{\fib}{fib}
\DeclareMathOperator{\id}{id}


\newcommand{\cC}{\mathscr{C}}

\newcommand{\cE}{\mathscr{E}}
\newcommand{\cF}{\mathscr{F}}

\newcommand{\cK}{\mathscr{K}}
\newcommand{\cL}{\mathscr{L}}
\newcommand{\cM}{\mathscr{M}}

\newcommand{\cR}{\mathscr{R}}
\newcommand{\cS}{\mathscr{S}}
\newcommand{\cT}{\mathscr{T}}

\newcommand{\cX}{\mathscr{X}}

%
%
%

\makeatletter
\pgfqkeys{/tikz/commutative diagrams}{
  row sep/.code={\tikzcd@sep{row}{#1}{}},
  column sep/.code={\tikzcd@sep{column}{#1}{}},
  bo row sep/.code={\tikzcd@sep{row}{#1}{between origins}},
  bo column sep/.code={\tikzcd@sep{column}{#1}{between origins}},
  bo column sep/.default=normal,
  bo row sep/.default=normal,
}
\def\tikzcd@sep#1#2#3{
  \pgfkeysifdefined{/tikz/commutative diagrams/#1 sep/#2}%
    {\pgfkeysalso{/tikz/#1 sep={\ifx\\#3\\1*\else1.7*\fi\pgfkeysvalueof{/tikz/commutative diagrams/#1 sep/#2},#3}}}%
    {\pgfkeysalso{/tikz/#1 sep={#2,#3}}}}
\makeatother

\title{Goodwillie's Calculus of Functors \\ and Higher Topos Theory}
\author{Mathieu Anel\footnote{SPHERE, UMR 7219, Univ. Paris Diderot. mathieu.anel@gmail.com} , \ \ 
Georg Biedermann\footnote{LAGA (UMR7539), Institut Galil\'ee, Univ. Paris 13; new address: Universidad del Norte, Departamento de Matem\'{a}ticas y Estad\'{i}stica, Barranquilla, Colombia, gbm@posteo.de} , \ \ 
Eric Finster\footnote{IRIF, Univ. Paris Diderot. ericfinster@gmail.com} , \\
and Andr\'{e} Joyal\footnote{CIRGET, UQ\`AM. joyal.andre@uqam.ca} 
}
\date{}

\begin{document}

\maketitle

\begin{abstract}
  We develop an approach to Goodwillie's Calculus of Functors using
  the techniques of higher topos theory.  Central to our method is the
  introduction of the notion of fiberwise orthogonality, a
  strengthening of ordinary orthogonality which allows us to give a
  number of useful characterizations of the class of $n$-excisive
  maps.  We use these results to show that the pushout product of a
  $P_n$-equivalence with a $P_m$-equivalence is a
  $P_{m+n+1}$-equivalence.  Then, building on our previous
  work~\cite{ABFJ:gbm}, we prove a Blakers-Massey type theorem for the
  Goodwillie tower of functors. We show how to use the resulting
  techniques to rederive some foundational theorems in the subject,
  such as delooping of homogeneous functors.
\end{abstract}

\setcounter{tocdepth}{2}
\tableofcontents

\section{Introduction}
\label{sec:introduction}

Goodwillie's calculus of homotopy functors \cite{G90, G92, G03} is a
powerful technique in homotopy theory for approximating possibly
complicated functors by simpler ones using a generalized notion of
excision.  In particular, applied to the identity functor on the
category of spaces, it produces a filtration interpolating between
stable and unstable homotopy which has proved extremely useful in
calculations.

In this article, we revisit some of the foundations of
the subject from the point of view of higher topos theory.  In
particular, we will show that many of the fundamental results can be
deduced from the following Blakers-Massey type theorem, which we feel
is of independent interest.
\vspace*{2mm}

\noindent
{\bf Theorem~\ref{thm:Goodwillie-Blakers-Massey}}
  Let
  \[
  \begin{tikzcd}
    F \ar[r, "g"] \ar[d, "f"'] & H \ar[d] \\
    G \ar[r] & K \pomark
  \end{tikzcd}
  \]
  be a homotopy pushout square of functors. If $f$ is a $P_m$-equivalence and $g$ is a $P_n$-equivalence, then the induced cartesian gap map
  \[ \gap{f}{g} : F \to G \times_K H \]
  \noindent is a $P_{m + n + 1}$-equivalence.
\vspace*{2mm}

The second result is a``dual'' version.
\vspace*{2mm}

\noindent
{\bf Theorem~\ref{thm:dual-Goodwillie-Blakers-Massey}}
Let
  \[
  \begin{tikzcd}
    F \pbmark \ar[r] \ar[d] & H \ar[d, "f"] \\
    G \ar[r, "g"] & K
  \end{tikzcd}
  \]
be a homotopy pullback square of functors. If $f$ is a $P_m$-equivalence and $g$ is a $P_n$-equivalence, then the cocartesian gap map
  \[ \cogap{f}{g}: G\sqcup_F H \to K\]
is a $P_{m + n + 1}$-equivalence.
\vspace*{2mm}

We show how to rederive known delooping results in homotopy
functor calculus in an easy and conceptual way as consequences. In
particular, we obtain a new proof of Goodwillie's Lemma 2.2 \cite{G03}
 that homogeneous functors deloop, independent of the material of
\cite[Section 2]{G03}

Both of these results rest on the material of the companion
article~\cite{ABFJ:gbm} where a very general version of the
Bla\-kers-Massey theorem was proved. There, the language of higher
topoi was adopted, and we find it equally well suited for the calculus
of homotopy functors, particularly because $n$-excisive functors to
spaces form a higher topos.  Indeed many of the results of this
article arise from working systematically fiberwise, a method very
much encouraged by the topos-theoretic point of view.  Given this
general framework, it will thus be convenient to drop any reference to
higher derived structures and take them for granted. When we talk
about a ``category'', we mean ``$\infty$-category'' and all
(co-)limits are to be interpreted as $\infty$-categorical
\mbox{(co-)}limits. In particular, we will not use the terms
``homotopy \mbox{(co-)}limit'', as was done above for the sake of
introduction. The $\infty$-categorical machinery already describes a
derived, homotopy invariant setting with all higher coherences.  We
will fequently say ``isomorphism'' for what is perhaps more commonly
called ``weak equivalence''. Similarly, mapping spaces or internal hom
objects are always to be taken derived.  The reader who finds this
article easier to read by using model structures should not have any
difficulties in doing so.

The main tool of our paper~\cite{ABFJ:gbm} was the notion of
\emph{modality}, and it will be equally important here. A modality is
a unique factorization system whose left and right class are closed
under base change.  An example is the factorization of a map of spaces
into an $n$-connected map followed by an $n$-truncated
map. Application of our generalized Blakers-Massey theorem to this
example leads to the classical version of the theorem. Here we observe
that the factorization of a natural transformation into a
$P_n$-equivalence followed by an $n$-excisive map is a modality in
a presheaf topos. That this is the case is a consequence of the fact
that Goodwillie's $n$-excisive approximation $P_n$ is, in fact, a left
exact localization of the topos of functors.  The left classes of
these $n$-excisive modalities for various $n\ge 0$ are compatible with
the pushout product in the following sense: \vspace*{2mm}

\noindent
{\bf Theorem~\ref{thm:adjunction-tricks}(4)}
The pushout product of a $P_m$-equivalence with a $P_n$-equi\-va\-lence is a $P_{m+n+1}$-equivalence.
\vspace*{2mm}

This fact immediately implies the main theorems by means of the
generalized Blakers-Massey theorems from~\cite{ABFJ:gbm}. It also
yields that the smash or join of an $m$-reduced functor with an
$n$-reduced functor is $(m+n-1)$-reduced, see
Example~\ref{ex:pointed-functor}.  In order to prove this theorem we
need to take a step back and develop systematically a fiberwise
approach, which is to say, concentrate our attention on constructions
which are compatible with base change. Indeed, one characterization of
a modality is a factorization system whose left and right classes
determine each other via \emph{fiberwise orthogonality}, a notion
which we introduce in Definition~\ref{defiberwiseorth0}.  Briefly, two
maps are fiberwise orthogonal if all of their base changes are
externally orthogonal to each other.  As is the case in the theory of
ordinary factorization systems, this condition can be reformulated as
say that a certain map, which we term the \emph{fiberwise diagonal}
(Definition~\ref{def:fiberwise-diagonal}) is an isomorphism.  The
resulting adjunction tricks exploited in
Proposition~\ref{prop:the-strange-adjunction} lead us to
Theorem~\ref{thm:n-excisive-equals-Pn-local} where we prove that the
$n$-excisive modalities are generated by pushout product powers of
certain explicit generating maps.

From here, Theorems~\ref{thm:Goodwillie-Blakers-Massey}
and~\ref{thm:dual-Goodwillie-Blakers-Massey} which provide analogues
of the Blakers-Massey theorems for the Goodwillie tower are easily
deduced. This allows us in Theorem~\ref{thm:delooping} to find a
classifying map for the map $P_nF\to P_{n-1}F$ in the Goodwillie tower
and reprove delooping results (Corollaries~\ref{cor:delooping}
and~\ref{thm:ADL}) for functors whose derivatives live only in a
certain range; in particular homogeneous functors are infinitely
deloopable.

To justify a portion of the result in~\ref{thm:delooping}, and because
we feel it is of independent interest,
Appendix~\ref{sec:appendix-epis-monos} is included. In
Theorem~\ref{thm:epi-mono-nexc} we give a characterization of
monomorphisms and covers (effective epimorphisms) in the topos
$\Snexc$ of $n$-excisive functors. As far as we know this is the first
time $n$-excisive functors are studied in detail as a topos and we
wish to advertise this as a fruitful line of thought.

Finally, a few remarks are in order about the overall placement of our
results in the larger landscape of studies of the Goodwillie Calculus.
Indeed, by now there are many versions of Goodwillie-style filtrations
which appear in a number of different contexts.  What is traditionally
known as the \emph{homotopy calculus} and concerns excisive properties
of functors (defined by their behavior on certain cubical diagrams)
can be developed in a very general setting as is done, for example,
model category theoretically in \cite{kuhn2007goodwillie} or
$\infty$-categorically in \cite{lurie2016higher}.  From the point of
view of this theory, the results of this article are somewhat restricted:
our arguments require that the functors under study have as codomain 
an $\infty$-topos.  In particular, this means we can not immediately
apply our results to functors taking values in stable categories, such
as, for example, spectra.  It remains for future work to understand to
what extent our techniques might be applied to the stable case.

From another point of view, however, our results can be seen as
providing a \emph{generalization} of the homotopy calculus.  For
example, when we restrict to functors with values in a topos, our
Theorem~\ref{thm:n-excisive-equals-Pn-local} leads to a completely
internal characterization of the construction of the Goodwillie tower
which makes no mention of cubical diagrams.  In upcoming work we will
show how these techniques can be applied to give a uniform treatment
of other varieties of Goodwillie calculus such as the \emph{orthogonal
  calculus} \cite{weiss1995orthogonal} where the approximation scheme
is not necessarily defined in terms of the behavior of a functor with
respect to limits and colimits.

This last point perhaps best illustrates the philosophy of our
approach.  Indeed, while the theory of higher topoi has applications
to Goodwillie calculus, providing streamlined and conceptual proofs of
its main results, the reverse is also true: we can use the Goodwillie
calculus as tool in the study of higher topoi themselves.  We feel
that much remains to be done in developing this point of view.

\medskip

{\bf Acknowledgment:} 
The first author has received funding from the European Research Council under the European Community's Seventh Framework Programme (FP7/2007-2013 Grant Agreement n$^{\circ}$263523). 
The second author and this project have received funding from the European Union’s Horizon 2020 research and innovation programme under Marie Sk\l odowska-Curie grant agreement No 661067.  
The second author also acknowledges support from the project ANR-16-CE40-0003 ChroK.
The third author has been supported by the CoqHoTT ERC Grant 64399.
The fourth author has been supported by the NSERCC grant 371436.

\section{Prerequisites}

In this section we recall material from the our companion
paper~\cite{ABFJ:gbm}.  In particular, we give the definition of a
modality~\ref{def:modality} and state our generalized Blakers-Massey
theorems~\ref{thm:gen-bm} and~\ref{thm:dual-gbm}.

\subsection{Topoi}
This article is written using the language of higher topoi.  For an
outline of the theory we refer the reader to~\cite{RezkTopos,
  JoyalNQC, LurieHT}.  A very brief overview of the essential
properties tailored to our needs is given in~\cite[Section
2]{ABFJ:gbm}.  We will now drop $\infty$ from the notation and refer
to them simply as topoi.  We write $\cS$ for the category of spaces.
We will denote the category of space-valued functors $\cC\to \cS$ on a
small category $\cC$ by $[\cC,\cS]$. A functor $\cC\to \cS$ is a
presheaf on $\cC\op$.

\begin{definition}
  A topos is an accessible left exact localization of a presheaf
  category $[\cC,\cS]$ for some small category $\cC$.
\end{definition}

The reader should be aware that ``left exact localization'' is to be
taken in the derived sense. Spelled out in the language of model
categories it means ``left Bousfield localization commuting with
finite homotopy limits up to weak equivalence''.  This is in line with
the general approach in this article that everything should be
interpreted in $\infty$-categorical terms. We recall that when we
speak of (co-)limits, the corresponding notions in the language of
model structures are homotopy (co-)limits.

\begin{example}
The primary examples of topoi of interest to us here are:
\begin{enumerate}
\item The category $\cS$ of spaces (as modelled by topological spaces
  or simplicial sets with weak homotopy equivalences) is the prime
  example of a topos.
\item The category $[\cC,\cS]$ of functors to spaces is a topos.
\item The full subcategory $[\cC, \cS]^{(n)} \subset [\cC, \cS]$ of
  \emph{$n$-excisive} functors, which, as explained in
  Example~\ref{exmp:Pn-yields-modality}, is itself a topos.
\end{enumerate}
\end{example}

We recall that within a topos colimits are preserved by base change.

\subsection{Cubes, gaps and cogaps}

Let $\ul{n}=\{1,\hdots,n\}$ and write $P(\ul{n})$ for the poset of its
subsets.  Define $P_0(\ul{n})$ to be the poset of non-empty subsets;
let $P_1(\ul{n})$ be the poset of proper subsets.

Now consider a finitely complete category $\cE$.  An \emph{$n$-cube}
in a category $\cE$ is a functor $\scr{X}: P(\ul{n}) \to \cE$. We will
refer to the canonical map
\[ 
\scr{X}(\emptyset)\to\lim_{U\in P_0(\ul{n})}\scr{X}(U)
\]
as the {\it cartesian gap map} or simply the {\it gap map} for brevity.
An $n$-cube is said to be \emph{cartesian} if its gap map is an isomorphism.
For example, a $2$-cube is cartesian if and only if it is a pullback square.

For an $n$-cube $\scr{X}$ in a finitely cocomplete category $\cE$
there also exists the canonical map
\[
\colim_{U \in P_1(\ul{n})} \scr{X}(U) \to \scr{X}(\ul{n}). 
\]
which we will call it the {\it cocartesian gap map} or briefly, {\it
  cogap map}.  An $n$-cube is \emph{cocartesian} if its cogap map is
an isomorphism.  A square is cocartesian if and only if it is a
pushout square.  

An $n$-cube is
called \emph{strongly cartesian} (resp. \emph{strongly cocartesian})
if all its $2$-dimensional subcubes are cartesian (resp. cocartesian). 

\begin{definition} \label{def:exterior-product-coproduct}
  \label{rem:iter-pp} The \emph{external cartesian product} 
  of two cubical diagrams $\scr{X}: P(\ul{m}) \to \cE$ and 
  $\scr{Y}: P(\ul{n}) \to \cE$ is a cubical diagram
  $\scr{X}\boxtimes \scr{Y}:P(\ul{m+n})=P(\ul{m})\times P(\ul{n}) \to \cE$
  defined by putting 
\[ 
(\scr{X}\boxtimes \scr{Y})(A,B)=\scr{X}(A)\times \scr{Y}(B)
\]
  for every $A\in P(\ul{m})$ and $B\in P(\ul{n})$.
  The \emph{external coproduct} $\scr{X}\boxplus \scr{Y}:P(\ul{m+n})=P(\ul{m})\times P(\ul{n}) \to \cE$
  is defined by putting 
\[
(\scr{X}\boxplus \scr{Y})(A,B)=\scr{X}(A)\sqcup \scr{Y}(B) .
\]
  \end{definition}

  The external cartesian (co)product of two strongly
  (co)cartesian cubes is strongly (co)cartesian.
  Every map $f:A\to B$ defines a 1-cube $f:P(\ul{1})\to \cE$.
  The $n$-cube 
\[
f_1\boxtimes \cdots \boxtimes f_n: P(\ul{n})\to \cE
\]
  is strongly cartesian and the 
  $n$-cube 
\[
f_1\boxplus \cdots \boxplus f_n: P(\ul{n})\to \cE
\]
  is strongly cocartesian for any
  sequence of maps $\{f_i : K_i \to L_i\}_{i=1}^{n}$. 
  In particular, the square 
\[
\begin{tikzcd}
       A\vee B \ar[r]\ar[d] & B \ar[d] \\ 
       A \ar[r] & \term
  \end{tikzcd}
\]
  is cocartesian for any pair of objects in a pointed 
  category.

\bigskip
Now let $\cE$ be finitely complete and finitely cocomplete.
Given a commutative square in $\cE$:
\[
  \begin{tikzcd}
       A \ar[r, "g"]\ar[d, "f"'] & C \ar[d, "k"] \\ B \ar[r, "h"] & D
  \end{tikzcd}
\]
We will denote the gap map by
\[
   \gap{f}{g}: A \to B\times_D C. 
\]
The cogap map of the square will be denoted by
\[   
   \cogap{h}{k}: B\cup_A C\to D .
\]
Strictly speaking these maps depend on the whole square. In practice
the remaining maps will always be clear from the context.

\subsection{Pushout product and pullback hom}
\label{subsec:pp-defn}

Let $\cE$ be a topos.
For any two maps $u: A\to B$ and $v: S\to T$ in $\cE$ the square
\[
  \begin{tikzcd}
    A\times S \ar[d, "u \times 1_S"'] \ar[r, "1_A \times v"] & A\times T \ar[d, "u \times 1_T"] \\
    B\times S \ar[r, "1_B \times v"'] & B\times T.
  \end{tikzcd}
\]
is cartesian, and we define the {\em pushout product of $u$ and $v$}, denoted $u\pp v$, to be the cocartesian gap map of the
previous square:
\[
  u\pp v = A\times T\sqcup_{A\times S} B\times S \to B\times T.
\]
Let $\ini$ and $\term$ be respectively an initial and a terminal
object for $\cE$. A topos $\cE$ has a {\it strict} initial object
which means that any arrow $C\to \ini$ is an isomorphism.  In
particular, since $\cE$ has finite products, this implies
$\ini\times X = \ini$ for all objects $X\in \cE$.  The pushout
product defines a symmetric monoidal structure on the category
$\cE^{\to}$ of arrows, with unit $\ini\to \term$. In particular, we
have $u\pp v= v\pp u$ and $(u\pp v)\pp w=u\pp(v\pp w)$.

\begin{example}\label{example-po}
  We give some examples of pushout products that will be useful in the sequel.
\begin{enumerate} 
  \item 
  The iterated pushout product $f_1\pp \cdots \pp f_n$
     of a sequence of maps  $\{f_i : K_i \to L_i\}_{i=1}^{n}$ 
     is the cogap map of $n$-cube $f_1\boxtimes \cdots \boxtimes f_n$.
  \item 
    For any map $A\to B$ in $\cE$ and any object $C$, the map
    $(\ini \to C)\pp (A\to B)$ is simply
\[
    (\ini \to C)\pp (A\to B) = C\times A\to C\times B
\]
  \item 
    For two pointed objects $\term\to A$ and $\term \to B$ in $\cE$, we
    have
\[
    (\term \to A)\pp (\term \to B) = A \vee B \to A \times B
\]
    the canonical inclusion of the wedge into the product.
  \item 

    Recall that the {\em join} of two objects $A$ and $B$ in $\cE$,
    denoted $A\join B$, is the pushout of the diagram
    $A\lto A\times B\to B$.  One sees immediately that
\[
    (A\to \term)\pp (B\to \term) = A\join B\to \term.
\]
\item The \emph{fiberwise join} $X\join_B Y$ of two maps $f:X\to B$ and
$g:Y\to B$ is the pushout  of the diagram $X\lto X\times_B Y\to Y$.
It is the pullback of the map $f\pp g$ along the diagonal $B\to B\times B$
\[
  \begin{tikzcd}
    X \sqcup_{X \times_B Y} Y \ar[r] \ar[d, "f \join_B g"'] \pbmark
    & (X \times B) \sqcup_{X \times Y} (B \times Y) \ar[d, "f \pp g"] \\
    B \ar[r] & B \times B.
  \end{tikzcd}
\]
The name fiberwise join is justified by the fact that for $b \in B$,
we have the identification
\[ \fib_b (X \join_B Y) = (\fib_b f) \join (\fib_b g) \]
since colimits are stable by base change in the topos.
More generally, the iterated fiberwise join $X_1\join_B \cdots \join_B X_n$
of a sequence of maps $f_i:X_i\to B$ ($1\leq i\leq n$) with
codomain $B$ is the the pullback of the map $f_1\pp \cdots \pp f_n$ 
along the diagonal $B\to B^n$.
  \item \label{fib-join}
    Since colimits in $\cE$ commute with base change, the pushout
    product $f\pp g$ can be thought as the ``external" join product of
    the fibers of $f$ and $g$. An easy computation shows that the
    fiber of the map $(f:A\to B)\pp (g:C\to D)$ at a point
    $(b,d)\in B\times D$ is the join of the fibers of $f$ and $g$.
    Details can be found in~\cite[Rem. 2.4]{ABFJ:gbm}.
  \item
    For an object $Z$ the slice category $\cE/Z$ has its own pushout
    product denoted $\pp_Z$. Given $f:A\to B$ and $g:X\to Y$ in $\cE/Z$,
    the corresponding formula reads
    \[
      f\pp_Z g = (A\times_Z Y)\cup_{(A\times_Z X)} (B\times_Z X) \to
      (B\times_Z Y).
    \]  
  \end{enumerate}
\end{example}

  We will make use of these observations in Section \ref{sec:goodwillie-loc}
  in order to relate the calculus of strongly cocartesian diagrams in
  a category $\cC$ with that of orthogonality in the presheaf
  category $[\cC, \cS]$.

\bigskip

For two objects $A$, $B$ of $\cE$, we let $\map{A}{B}$ be the space of
maps from $A$ to $B$ in $\cE$.  For two maps $u:A\to B$ and $f:X\to Y$ in
$\cE$ we consider the following commutative square in $\cS$
\[
  \begin{tikzcd}
    \map{B}{X} \ar[d] \ar[r] & \map{B}{Y} \ar[d] \\
    \map{A}{X} \ar[r]  & \map{A}{Y}.
  \end{tikzcd}
\]
We define the {\em external pullback hom} $\ph{u}{f}$ to be the cartesian gap map of the previous square:
\[
  \ph{u}{f} : \map{B}{X} \to \map{A}{X}\times_{\map{A}{Y}}\map{B}{Y}.
\]
Let $\intmap{A}{B}$ denote the internal hom object in $\cE$. Then we can define similarly an {\em internal pullback hom}
\[
  \intph{u}{f} : \intmap{B}{X} \to
  \intmap{A}{X}\times_{\intmap{A}{Y}}\intmap{B}{Y},
\]
which is a map in $\cE$.

\begin{example} \label{extrongorth}
The internal pullback hom has a number of useful special cases:
\begin{enumerate} 
  \item 
The category of spaces $\cS$ is cartesian closed
and we have $\intph{u}{f}=\ph{u}{f}$ for any pair of maps $u,f\in \cS$.
\item If $S^0 = \term\sqcup \term $ is the 0-sphere in $\cE$, then 
\[
\intph{S^0\to \term}{X\to \term} = X\to X\times X
\]
is the diagonal map of $X$.
Similarly, for any map $f:X\to Y$ the map
\[
\intph{S^0\to \term}{f} = X\to X\times_Y X
\]
is the diagonal map $\Delta f$ of $f$.

\item \label{ex:A-diagonal}
More generally, for any object $A$ in $\cE$, the map
\[
  \Delta_A(X)=\intph{A\to\term}{X\to\term}:X\to \intmap{A}{X}
\]
is the $A$-diagonal of $X$.
Similarly, for any map $f:X\to Y$ the map $\intph{A\to \term}{f}$ defines the $A$-diagonal of $f$.
\end{enumerate} 
\end{example}
 
It is useful to keep in mind that the \emph{global section functor} $\Gamma:=\map{\term}{-}:\cE \to \cS$ takes the object $\intmap{A}{B}$ to the space $\map{A}{B}$:
\[
\Gamma(\intmap{A}{B}) = \map{\term}{\intmap{A}{B}}=\map{A}{B}.
\]
Since $\Gamma$ commutes with all limits, one has
\[
\Gamma(\intph{f}{g})=\map{\term}{\intph{f}{g}} = \ph{f}{g}.
\]

For any $Z\in \cE$, the slice topos $\cE\slice{Z}$ has 
an internal hom and an internal pullback hom that we will denote respectively by $\intmap{-}{-}_Z$ and $\intph{-}{-}_Z$.
The base change $u^*:\cE\slice{Z}\to \cE\slice{T}$ along a map $u:T\to Z$, preserves cartesian products and internal homs.
For two objects $A$, $B$ in $\cE\slice{Z}$, we have a canonical isomorphism
\[
u^*\intmap{A}{B}_Z = \intmap{u^*A}{u^*B}_T.
\]
We leave to the reader the proof of the following lemma asserting that the same formula is true for the internal pullback hom.

\begin{lemma}\label{lem:base-change-intph}
For two maps $f:A\to B$ and $g:X\to Y$ in $\cE\slice{Z}$, and any map $u:T\to Z$ we have a canonical isomorphism in $\cE\slice{T}$:
\[
u^*\intph{f}{g}_Z = \intph{u^*f}{u^*g}_T.
\]
\end{lemma}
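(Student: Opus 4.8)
The plan is to unwind the definition of the internal pullback hom and observe that each ingredient of its construction---internal homs together with a single pullback---is preserved by the base-change functor $u^*$. Recall that for $f : A \to B$ and $g : X \to Y$ in $\cE\slice{Z}$ the map $\intph{f}{g}_Z$ is by definition the cartesian gap map of the square of internal homs
\[
\begin{tikzcd}
\intmap{B}{X}_Z \ar[d] \ar[r] & \intmap{B}{Y}_Z \ar[d] \\
\intmap{A}{X}_Z \ar[r] & \intmap{A}{Y}_Z,
\end{tikzcd}
\]
that is, the canonical map $\intmap{B}{X}_Z \tto \intmap{A}{X}_Z \times_{\intmap{A}{Y}_Z} \intmap{B}{Y}_Z$. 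Thus the entire datum of $\intph{f}{g}_Z$ is assembled from the four internal homs, the four restriction maps between them, and the pullback of the lower-right cospan.

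First I would apply $u^*$ to this square. By the isomorphism $u^*\intmap{A}{B}_Z = \intmap{u^*A}{u^*B}_T$ recalled immediately before the lemma, applied to each of the four corners, the image square is canonically identified with the square of internal homs in $\cE\slice{T}$ determined by $u^*f : u^*A \to u^*B$ and $u^*g : u^*X \to u^*Y$, namely
\[
\begin{tikzcd}
\intmap{u^*B}{u^*X}_T \ar[d] \ar[r] & \intmap{u^*B}{u^*Y}_T \ar[d] \\
\intmap{u^*A}{u^*X}_T \ar[r] & \intmap{u^*A}{u^*Y}_T.
\end{tikzcd}
\]

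Next I would use that $u^*$ is left exact. Since the base change $u^*$ is a right adjoint (to post-composition with $u$), it preserves all limits, and in particular the pullback $\intmap{A}{X}_Z \times_{\intmap{A}{Y}_Z} \intmap{B}{Y}_Z$ together with the canonical map into it. Hence $u^*$ carries the gap map $\intph{f}{g}_Z$ to the gap map of the image square; combined with the identification of the preceding paragraph, this gap map is precisely $\intph{u^*f}{u^*g}_T$. This yields the claimed canonical isomorphism in $\cE\slice{T}$.

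The only point requiring genuine care is coherence: to conclude that the image square is \emph{the same} commutative square as the one built from $u^*f$ and $u^*g$, one must check that the canonical isomorphisms $u^*\intmap{-}{-}_Z \cong \intmap{u^*-}{u^*-}_T$ commute with the horizontal and vertical restriction maps, and not merely identify the vertices. This is exactly the naturality of the internal-hom base-change formula in both variables, which follows from the adjunction properties of $u^*$ already invoked. Granting that naturality, the preservation of the pullback and of the gap map is formal, and the lemma follows.
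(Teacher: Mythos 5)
Your proof is correct and is exactly the argument the paper intends: the lemma is explicitly left to the reader immediately after the base-change formula $u^*\intmap{A}{B}_Z = \intmap{u^*A}{u^*B}_T$ is recalled, and the expected completion consists precisely of your two steps---identify the $u^*$-image of the defining square of internal homs via that formula (naturally in both variables), then use that $u^*$, being a right adjoint, preserves the pullback and hence carries the cartesian gap map defining $\intph{f}{g}_Z$ to the gap map $\intph{u^*f}{u^*g}_T$. Your closing remark correctly isolates the one point of care (naturality of the corner identifications), so nothing further is needed.
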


\begin{remark}
The external and internal pullback hom define functors
\[
  \ph{-}{-}:\left(\cE^{\to}\right)\op \times \cE^{\to} \to \cS^\to
\qquad\textrm{and}\qquad 
\intph{-}{-}:\left(\cE^{\to}\right)\op \times \cE^{\to} \to \cE^{\to}.
\]
Together with the pushout product the internal pullback hom yields a closed symmetric monoidal structure on $\cE^{\to}$. In particular, we have
\[ \intph{f\pp g}{h}=\intph{f}{\intph{g}{h}}. \]
We have also the relation $\ph{f\pp g}{h}=\ph{f}{\intph{g}{h}}$.
\end{remark}

\subsection{Orthogonality conditions}

In this section, we define and compare three notions of orthogonality
for maps: the {\em external orthogonality} $\perp$ and {\em internal
  orthogonality} $\iperp$, which will be related to the internal and
external pullback hom, and the new {\em fiberwise
  orthogonality~$\sperp$} (Definition \ref{defiberwiseorth0}), which
will be related to a variation of the pullback hom in Section
\ref{fwd}.

Although our focus is mainly on $\sperp$, it is convenient to
formulate its properties as properties of $\perp$. So we provide some
recollection on the matter. The relation $\iperp$ is introduced only
for comparison purposes and to avoid any confusion between $\sperp$
and $\intperp$.

Let us point out that our motivation for introducing fiberwise
orthogonality and the fiberwise diagonal is to prove
Proposition~\ref{prop:the-strange-adjunction}. This eventually leads
to Theorem~\ref{thm:adjunction-tricks} that is our new ingredient to
Goodwillie calculus that lets us prove the Blakers-Massey Theorem for
the Goodwillie tower.

\begin{definition}\label{def:ext-orth}
  Two maps $f:A\to B$ and $g:X\to Y$ in $\cE$ are {\em externally
    orthogonal} or simply {\em orthogonal} if the map $\ph{f}{g}$ is
  an isomorphism in $\cS$.  We write $f\perp g$ for this relation and
  we say that {\it $f$ is externally left orthogonal to $g$} and that
  {\it $g$ is externally right orthogonal to $f$}.
\end{definition}

Unfolding the definitions, one immediately verifies that if
$f \perp g$ then for every commutative square
\begin{equation*}
\begin{tikzcd}
    A \ar[d,"f"'] \ar[r, "h"] & X \ar[d, "g"] \\
    B \ar[r, "k"'] \ar[ru, dashed, "d"] & Y .
\end{tikzcd}
\end{equation*}
the space of diagonal fillers is contractible, that
is to say, a digonal filler exists and is unique up to homotopy.

Recall that for a topos $\cE$, all slice categories $\cE\slice{Z}$ are
also topoi.  Therefore, each $\cE\slice{Z}$ has an external orthogonality
relation which we will denote by $\relperp{Z}$.

\medskip

\begin{definition}\label{def:int-orth}
We will say that two maps $f:A\to B$ and $g:X\to Y$ are {\em internally orthogonal}, 
and write $ f \iperp g$, if the map $\intph{f}{g}$ is an isomorphism in $\cE$. 
Similarly we say that {\it $f$ is internally left orthogonal to $g$} and that {\it $g$ is internally right orthogonal to $f$}.
\end{definition}

For an object $Z$ and a map $f:A\to B$ we write $Z\times f: Z\times A\xrightarrow{\id_Z\times f} Z\times B$.
We leave to the reader the proof of the following lemma.

\begin{lemma}\label{lem:orthoint}
The following conditions are equivalent:
\begin{enumerate}
\item[\emph{(1)}] 
	$f\iperp g$
\item[\emph{(2)}] 
	For any $Z\in \cE$ we have $(Z\times f)\perp g$.
\end{enumerate}
In particular, $f\iperp g$ implies $f\perp g$.
\end{lemma}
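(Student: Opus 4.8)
The plan is to reduce the equivalence to the Yoneda lemma by means of a single computation of mapping spaces. The crucial observation, which generalizes the identity $\Gamma(\intph{f}{g}) = \ph{f}{g}$ recorded just above the lemma, is that for every object $Z \in \cE$ there is a natural identification of maps in $\cS$
\[ \map{Z}{\intph{f}{g}} \;=\; \ph{Z \times f}{g}. \]
Granting this, both implications follow at once, so the whole content of the lemma lies in establishing and correctly interpreting this formula.

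First I would establish the identity. Each representable functor $\map{Z}{-} : \cE \to \cS$ preserves all limits, so applying it to the defining cartesian square of $\intph{f}{g}$ produces the gap map
\[ \map{Z}{\intmap{B}{X}} \to \map{Z}{\intmap{A}{X}} \times_{\map{Z}{\intmap{A}{Y}}} \map{Z}{\intmap{B}{Y}}. \]
Next I would invoke the tensor--hom adjunction $\map{Z}{\intmap{A}{B}} = \map{Z \times A}{B}$, valid because the topos $\cE$ is cartesian closed. Rewriting each corner of the cospan under this adjunction turns the displayed map into
\[ \map{Z \times B}{X} \to \map{Z \times A}{X} \times_{\map{Z \times A}{Y}} \map{Z \times B}{Y}, \]
which is exactly the cartesian gap map defining $\ph{Z \times f}{g}$. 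The one point requiring care, and the step I expect to be the main (if routine) obstacle, is \emph{naturality}: I must check that the adjunction isomorphisms intertwine the legs of the two cospans, i.e.\ that they are induced by precomposition with $Z \times f$ and postcomposition with $g$. This amounts to unwinding that the exponential transpose is natural in both variables, so that the identification above is a genuine identification of gap maps rather than merely of their sources and targets.

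With the identity in hand the proof concludes formally. By the Yoneda lemma a map in $\cE$ is an isomorphism if and only if it induces an isomorphism on $\map{Z}{-}$ for every $Z \in \cE$. Applied to $\intph{f}{g}$, this says that condition (1), namely $\intph{f}{g}$ is an isomorphism in $\cE$, holds if and only if $\map{Z}{\intph{f}{g}}$ is an isomorphism in $\cS$ for all $Z$; by the identity this is precisely the statement that $\ph{Z \times f}{g}$ is an isomorphism for all $Z$, which is condition (2). Finally, for the ``in particular'' clause I would specialize (2) to $Z = \term$: since $\term \times f = f$, it yields $\ph{f}{g}$ is an isomorphism, that is $f \perp g$, so $f \iperp g$ implies $f \perp g$.
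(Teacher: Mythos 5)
Your proof is correct, and since the paper explicitly leaves this lemma to the reader, your argument --- applying the limit-preserving functor $\map{Z}{-}$ to the defining cartesian square, rewriting via the adjunction $\map{Z}{\intmap{A}{B}} = \map{Z\times A}{B}$ to obtain $\map{Z}{\intph{f}{g}} = \ph{Z\times f}{g}$, and concluding by Yoneda --- is exactly the intended standard argument. Your attention to naturality of the transposition and the specialization $Z=\term$ for the final clause are both handled correctly.
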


Since each slice topos $\cE\slice{Z}$ has its own internal hom objects it has an internal orthogonality relation which we will denote by $\relintperp{Z}$.
The following lemma proves that these internal orthogonality relations are compatible with base change.

\begin{lemma}\label{lem:stabintortho}
For any two maps $f:A\to B$ and $g:X\to Y$ in $\cE$, and for any object $Z\in \cE$ we have
\[
f \iperp g \quad \Longrightarrow \quad Z^*f \reliperp{Z} Z^*g
\]
where $Z^*$ is the base change functor along the map $Z\to \term$.
Moreover, the converse is true if the map $Z\to \term$ is a cover.
\end{lemma}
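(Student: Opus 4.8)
The plan is to deduce both implications directly from the base-change formula for the internal pullback hom (Lemma~\ref{lem:base-change-intph}), together with elementary facts about base-change functors; the only substantive input will be the conservativity of base change along a cover, which I save for last. First I would specialize Lemma~\ref{lem:base-change-intph} to the structure map $p:Z\to\term$. Taking the base of the slice in that lemma to be $\term$, so that $f$ and $g$ are regarded as maps in $\cE=\cE\slice{\term}$, and taking the map $u$ there to be $p:Z\to\term$ (so that $u^*=Z^*$), the lemma produces a canonical isomorphism in $\cE\slice{Z}$:
\[
Z^*\intph{f}{g} = \intph{Z^*f}{Z^*g}_Z .
\]
This single identity is the engine of the whole argument: it says that applying $Z^*$ to the internal pullback hom computed in $\cE$ yields the internal pullback hom computed in the slice topos $\cE\slice{Z}$.

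For the forward implication, suppose $f\iperp g$, so that $\intph{f}{g}$ is an isomorphism in $\cE$ by Definition~\ref{def:int-orth}. Since $Z^*$ is a functor it preserves isomorphisms, hence $Z^*\intph{f}{g}$ is an isomorphism in $\cE\slice{Z}$; by the displayed identity $\intph{Z^*f}{Z^*g}_Z$ is then an isomorphism, which is exactly the assertion $Z^*f \reliperp{Z} Z^*g$. This direction requires no hypothesis on $Z$.

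For the converse, assume that $p:Z\to\term$ is a cover and that $Z^*f \reliperp{Z} Z^*g$, i.e. that $\intph{Z^*f}{Z^*g}_Z$ is an isomorphism in $\cE\slice{Z}$. By the displayed identity again, $Z^*\intph{f}{g}$ is an isomorphism. To conclude that $\intph{f}{g}$ itself is an isomorphism in $\cE$, and hence that $f\iperp g$, I would invoke the fact that base change along a cover is \emph{conservative}: if $p:Z\to\term$ is an effective epimorphism, then $Z^*=p^*:\cE\to\cE\slice{Z}$ reflects isomorphisms. This conservativity is the one genuinely nonformal ingredient, and I expect it to be the main point to pin down; it is a standard consequence of descent in a topos—covers are of effective descent, so isomorphisms may be detected after pullback along them—and can be cited from the recollections on topoi in~\cite{ABFJ:gbm} or from~\cite{LurieHT}. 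Applying conservativity to the isomorphism $Z^*\intph{f}{g}$ gives that $\intph{f}{g}$ is an isomorphism, completing the converse.
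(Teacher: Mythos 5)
Your proposal is correct and follows exactly the paper's own argument: both directions reduce to the base-change isomorphism $Z^*\intph{f}{g} = \intph{Z^*f}{Z^*g}_Z$ from Lemma~\ref{lem:base-change-intph}, with the forward implication being formal and the converse resting on conservativity of $Z^*$ when $Z\to\term$ is a cover. You have merely spelled out the same two steps in more detail than the paper does, including correctly identifying descent as the source of the conservativity fact.
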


\begin{proof}
If $u:Z\to \term$, then by Lemma \ref{lem:base-change-intph}, we have a canonical isomorphism
\[
u^*\intph{f}{g} = \intph{u^*f}{u^*g}_Z.
\]
This proves that $f\iperp g \Rightarrow u^*f\reliperp{Z} u^*g$.
The converse is true since the functor $u^*$ is conservative
when $u$ is a cover.
\end{proof}

The following proposition lists several equivalent properties that
will be used to define fiberwise orthogonality.  In order to
facilitate the reading, we employ the following convention in the
proofs which follow: given a map $f:A\to B$ and a map $u:Z\to B$, we
denote by $f_Z$ the map $u^*f:A\times_BZ\to Z$.  The point of this
notation is to make $u$ implicit, remembering only the new base.  The
context will make clear along which map the base is changed.

\begin{proposition}\label{prop:equivfwortho}
Given two maps $f:A\to B$ and $g:X\to Y$ in $\cE$, the following conditions are equivalent:
\begin{enumerate}
\item[\emph{(1)}] 
For any $Z\in \cE$ and any maps $b:Z\to B$ and $y:Z\to Y$, it is true in $\cE\slice{Z}$ that
\[
f_Z \reliperp{Z} g_Z.
\]
\item[\emph{(2)}] The base changes of $f$ and $g$ onto $B\times Y$ along the projections to $B$ and $Y$ satisfy 
\[
f_{B\times Y}  \reliperp{B\times Y} g_{B\times Y}.
\]
\item[\emph{(3)}] The diagonal map in $\cE\slice{B\times Y}$
\[
\Delta_{f_{B\times Y}}(g_{B\times Y}) : g_{B\times Y} \to \relintmap{B\times Y}{f_{B\times Y}}{g_{B\times Y}}
\] 
is an isomorphism (see \emph{Example \ref{extrongorth}.\ref{ex:A-diagonal}}).
\item[\emph{(4)}] 
For any $Z\to B\times Y$ and any $T\to Z$ we have 
\[ 
f_T\relperp{Z} g_Z. 
\]
\item[\emph{(5)}] For any $Z\in \cE$ and any maps $b:Z\to B$ and $y:Z\to Y$, it is true in $\cE\slice{Z}$ that
\[
f_Z  \relperp{Z}  g_Z.
\]
\item[\emph{(6)}] For any two maps $Z\to B$ and $Z'\to Y$ we have $f_Z\perp g_{Z'}$.
\item[\emph{(7)}] For any map $Z\to B$ we have $f_Z\perp g$.
\end{enumerate}
\end{proposition}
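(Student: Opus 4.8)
The plan is to establish a cycle of implications linking the seven conditions, exploiting the base-change compatibility lemmas already proved. The governing principle is that fiberwise orthogonality is, by design, a condition stable under all base changes, and each of the seven formulations is a different way of encoding ``for all base changes $Z \to B$ (and $Z \to Y$), the relevant orthogonality holds.'' My strategy is to prove a chain such as $(1) \Leftrightarrow (2) \Leftrightarrow (3)$, then $(1) \Rightarrow (5) \Rightarrow (6) \Rightarrow (7) \Rightarrow (4) \Rightarrow (1)$, adjusting the order so that each single step is either a direct application of a prior lemma or an elementary unwinding of definitions.

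First I would treat the equivalences $(1) \Leftrightarrow (2) \Leftrightarrow (3)$, which sit entirely at the level of internal orthogonality over the fixed base $B \times Y$. The step $(2) \Leftrightarrow (3)$ is immediate from the definition of $\iperp$ via the internal pullback hom together with Example~\ref{extrongorth}.\ref{ex:A-diagonal}: internal orthogonality $f_{B\times Y} \reliperp{B\times Y} g_{B\times Y}$ says exactly that the internal pullback hom is an isomorphism in $\cE\slice{B\times Y}$, and the $A$-diagonal formulation repackages this same isomorphism. For $(1) \Leftrightarrow (2)$ the key observation is that $B \times Y$ is the universal choice of base carrying both a map to $B$ and a map to $Y$; given any $Z$ with $b\colon Z\to B$ and $y\colon Z\to Y$, the induced map $Z \to B\times Y$ lets me apply Lemma~\ref{lem:base-change-intph} to pull back the internal pullback hom, and conversely $(1)$ applied to $Z = B\times Y$ with the two projections recovers $(2)$. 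So $(1)\Rightarrow(2)$ is specialization and $(2)\Rightarrow(1)$ is base change via Lemma~\ref{lem:base-change-intph}.

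Next I would connect the internal conditions to the external ones. The step $(1)\Rightarrow(5)$ is exactly the ``in particular'' clause of Lemma~\ref{lem:orthoint}, applied in each slice $\cE\slice{Z}$: internal orthogonality implies external orthogonality. The implications among the external formulations $(4),(5),(6),(7)$ are then mostly bookkeeping about which objects carry which structure maps. Concretely, $(6)\Rightarrow(7)$ takes $Z' = Y$ with the identity; $(5)\Rightarrow(6)$ and $(7)\Rightarrow(4)$ unwind the notation $f_Z$, $f_T$, $g_Z$ as base changes, noting that external orthogonality in a slice $\cE\slice{Z}$ is the same as external orthogonality of the base-changed maps in $\cE$ (since slices of topoi are topoi and the forgetful functor detects the relevant contractibility of filler spaces). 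The genuinely substantive closing step is $(4)\Rightarrow(1)$ (equivalently $(7)\Rightarrow(1)$, routed through $(4)$): here I must upgrade a purely external orthogonality condition, asserted for \emph{all} base changes, into an internal orthogonality statement over a single base. This is where Lemma~\ref{lem:stabintortho} does the heavy lifting together with the quantifier over $T \to Z$: by ranging over all $T\to Z$ I can test the external pullback hom after every further base change, and conservativity of base change along covers lets me conclude that the internal pullback hom is itself an isomorphism rather than merely inducing isomorphisms on global sections.

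The hard part will be this final implication, assembling a global (internal) isomorphism from the family of external orthogonality conditions. The subtlety is that external orthogonality only sees the isomorphism after applying the global sections functor $\Gamma$, so proving $f_Z \reliperp{Z} g_Z$ requires that the internal pullback hom $\intph{f_Z}{g_Z}_Z$ be an isomorphism \emph{in $\cE\slice{Z}$}, not just on global sections. The mechanism to bridge this gap is that a map in a topos is an isomorphism if and only if it induces an isomorphism on all fibers, and each fiber is computed by a further base change $T \to Z$; the quantifier over all $T\to Z$ in $(4)$ is precisely what supplies the external orthogonality after every such base change, which by $\Gamma(\intph{f}{g}) = \ph{f}{g}$ gives the fiberwise isomorphisms needed. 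I expect the cleanest route is to verify that $\intph{f_Z}{g_Z}_Z$ is an isomorphism by checking it fiberwise and invoking the formula of Lemma~\ref{lem:base-change-intph} to identify each fiber of the internal pullback hom with an external pullback hom over the appropriate $T$.
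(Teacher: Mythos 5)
Your overall architecture tracks the paper's proof closely: $(1)\Leftrightarrow(2)\Leftrightarrow(3)$ by specialization, Lemma~\ref{lem:base-change-intph}/\ref{lem:stabintortho}, and the definition of internal orthogonality over $B\times Y$, then a cycle through the external conditions closed by upgrading $(4)$ back to $(1)$. However, there is a genuine gap in the middle of your cycle, at $(5)\Rightarrow(6)$ and $(7)\Rightarrow(4)$, which you dismiss as bookkeeping. The principle you invoke --- that ``external orthogonality in a slice $\cE\slice{Z}$ is the same as external orthogonality of the base-changed maps in $\cE$'' --- is false at a fixed base: $f_Z \relperp{Z} g_Z$ only asserts contractible filler spaces for squares lying \emph{over} $Z$ (bottom edge the identity of $Z$), whereas $f_Z \perp g_{Z'}$ in $\cE$ quantifies over squares with an arbitrary bottom map, and neither the fact that slices of topoi are topoi nor any detection property of the forgetful functor identifies the two. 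The equivalences hold only in quantified form, and proving them is exactly where the paper's work lies: for each of $(5)\Leftrightarrow(6)$, $(7)\Rightarrow(6)$ and $(7)\Rightarrow(4)$ one pastes the given square with the cartesian square exhibiting one pullback of $g$ as a further pullback of another (e.g.\ $g_U$ as a pullback of $g_T$, or $g_Z$ as a pullback of $g$), and uses that pasting along a cartesian square induces an equivalence between the space of fillers of the outer rectangle and that of the left-hand square. Those three diagram arguments are the substantive content your proposal is missing, and no appeal to a general slice-versus-ambient principle can replace them.

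Two smaller remarks on your closing step $(4)\Rightarrow(1)$. Your invocation of the conservativity-along-covers clause of Lemma~\ref{lem:stabintortho} is a red herring: no cover is available or needed there. But the second mechanism you describe is correct and suffices on its own: testing $\intph{f_Z}{g_Z}_Z$ after every base change $T\to Z$, identifying $u^*\intph{f_Z}{g_Z}_Z = \intph{f_T}{g_T}_T$ by Lemma~\ref{lem:base-change-intph} and applying $\Gamma(\intph{-}{-})=\ph{-}{-}$, amounts to computing $\relmap{Z}{T}{\intph{f_Z}{g_Z}_Z}$ for all $T$, whence the conclusion by Yoneda. This is precisely the proof of Lemma~\ref{lem:orthoint} transported to the slice topos $\cE\slice{Z}$, which is how the paper dispatches $(1)\Leftrightarrow(4)$ in one line; note only that ``isomorphism on all fibers'' must mean all generalized points $T\to Z$ (as your quantifier does supply), since a general topos need not have enough global points.
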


\begin{proof}
(1) $\Rightarrow$ (2) 
This is obvious since (2) is a special case of (1). 

(2) $\Rightarrow$ (1)
This follows from Lemma~\ref{lem:stabintortho} that states that orthogonality is stable by base change.

(2) $\Leftrightarrow$ (3) 
This is equivalent by the definition of orthogonality in $\cE_{B\times Y}$.

(1) $\Leftrightarrow$ (4)
This is Lemma \ref{lem:orthoint} applied to the topos $\cE\slice{Z}$.

(4) $\Rightarrow$ (5)
Set $T\to Z=\id_Z$.

(5) $\Leftrightarrow$ (6)
We need to prove that for all $Z$ and all $B\lto Z \to Y$,
\[
f_Z  \relperp{Z}  g_Z \iff \forall\, U\to B,\ \forall\, T\to Y,\ f_U \perp g_T .\]
We consider the following diagram
\[
\begin{tikzcd}
    U\times_BA \ar[d,"f_U"'] \ar[r] & U\times_YX \ar[d, "g_U"'] \ar[r] \pbmark & T\times_YX \ar[d, "g_T"] \\
    U \ar[r,equals] & U \ar[r,"h"] & T
\end{tikzcd}
\]
where $h$ is arbitrary and the right square is cartesian.
Because the right square is cartesian, the space of diagonal fillers of the outer square is equivalent to that of the left square. 
When $h$ varies, the former condition gives $f_U  \perp  g_T$ and the latter $f_U  \relperp{U}  g_U$, hence proving their equivalence.

(6) $\Leftrightarrow$ (7)
Since it is clear that (6) $\Rightarrow$ (7), we need to show the other implication.
Let $f_U$ be the base change of $f$ along some map $U\to B$, 
and $g_T$ the base change of $g$ along some map $h:T\to Y$, 
we consider the following diagram where the left square is commutative and the right square is cartesian
\[
\begin{tikzcd}
    U\times_BA \ar[d,"f_U"'] \ar[r] & T\times_YX \ar[d, "g_T"'] \ar[r] \pbmark & X \ar[d, "g"] \\
    U \ar[r] & T \ar[r,"h"] & Y.
\end{tikzcd}
\]
Again, because the right square is cartesian, the space of diagonal fillers of the outer square is equivalent to that of the left square, which proves (7) $\Rightarrow$ (6).

(7) $\Rightarrow$ (4)
Let us consider the following diagram
\[
\begin{tikzcd}
    T\times_BA \ar[d,"f_T"'] \ar[r, "k"] & Z\times_YX \ar[d, "g_Z"'] \ar[r] \pbmark & X \ar[d, "g"] \\
    T \ar[r] & Z \ar[r] & Y.
\end{tikzcd}
\]
where the right square is cartesian and $k$ is any map such that the left square is commutative. Condition~(4) says that for any such $k$ the space of fillers of the left square is contractible. Since the right square is cartesian this is equivalent to the outer square having a contractible space of fillers.
But Condition~(7) states that any map from $f_T$ to $g$, i.e. a commutative square, has a contractible space of fillers. So (7) implies (4).
\end{proof}

\begin{definition} \label{defiberwiseorth0} 
We will say that two maps $f:A\to B$ and $g:X\to Y$ are {\em fiberwise orthogonal} if they satisfy the equivalent properties of Proposition \ref{prop:equivfwortho}.
We will denote this relation by $f\sperp g$ and say that $f$ is {\em fiberwise left orthogonal} to $g$, and that $g$ is {\it fiberwise right orthogonal} to $f$.
\end{definition}

The intuitive idea behind this relation is that any fiber of $f$ is orthogonal to any fiber of $g$ in the external sense. 
This is the meaning of Condition~(6) where ``any fiber'' has to be understood as ``any pullback over an arbitrary base". 
Another way to understand fiberwise orthogonality is to say that it is the stabilization by base change of the relation $f\perp g$, which is the meaning of Condition~(5).

Condition~(7) helps to see that the relation $f\sperp g$ is stronger than the relation $f\iperp g$ since, by Lemma \ref{lem:orthoint},
the latter only requires that $Z\times f$ is orthogonal to $g$ for every object $Z\in \cE$.
Thus,
\[
f\sperp g \quad\Rightarrow\quad f\iperp g \quad\Rightarrow\quad f\perp g.
\]

\begin{remark}\label{remarksonfiberwiseorth}
We have the following immediate observations:
\begin{enumerate}
\item If $f$ is fiberwise left orthogonal to $g$, then every base
  change $f'$ of $f$ is left orthogonal to every base change $g'$ of
  $g$. Moreover, $f\sperp g \Rightarrow f'\sperp g' $.
\item The map $A\to \term$ for an object $A$ is fiberwise left
  orthogonal to a map $f:X\to Y$ if and only if it is internally left
  orthogonal to $f$. In particular two objects $A$ and $X$ are
  fiberwise orthogonal $(A\to \term)\sperp (X\to \term)$ if and only
  if they are internally orthogonal $(A\to \term)\iperp (X\to \term)$.
\end{enumerate}
\end{remark}

\subsection{The fiberwise diagonal map}\label{fwd}

We saw that the external and internal orthogonality of two maps $f$ and $g$ can be detected by the condition that some map ($\ph{f}{g}$ or $\intph{f}{g}$) be an isomorphism. The same thing is true for the fiberwise orthogonality, although the construction of the corresponding map is a bit more involved.

\begin{definition}\label{def:fiberwise-diagonal}
Take two maps $f: A\to B$ and $g:X\to Y$ in $\cE$; pull them back to the common target $B\times Y$, i.e. consider the maps
\[ 
  f_{B\times Y}= f\times\id_Y: A\times Y\to B\times Y
\]
and
\[ 
  g_{B\times Y}= \id_B\times g: B\times X\to B\times Y
\]
and view them as objects over $B\times Y$. In the slice $\cE\slice{B\times Y}$ one can form the $f_{B\times Y}$-diagonal of $g_{B\times Y}$ already used in~\ref{prop:equivfwortho}.(3).
We will denote this diagonal by $\fwd{f}{g}$ and name it the {\em fiberwise diagonal map}. 
\[
\fwd{f}{g}=\Delta_{f_{B\times Y}}(g_{B\times Y})=\intph{f_{B\times Y}}{g_{B\times Y}}_{B\times Y},
\]
where the internal pullback hom on the right is taken in the topos $\cE\slice{B\times Y}$. 
Explicitly, 
\[ 
\fwd{f}{g} :
\begin{tikzcd}
       B\times X \ar[d, "{(\id_B,g)}"'] \\
       B\times Y
    \end{tikzcd}  
\tto
\left\lsem \begin{tikzcd}
      A\times Y \ar[d, "{(f,\id_Y)}"] \\
      B\times Y  
    \end{tikzcd} ,
\begin{tikzcd}
       B\times X \ar[d, "{(\id_B,g)}"] \\
       B\times Y
    \end{tikzcd}\right\rsem_{B\times Y}  .
\]
\end{definition}

\begin{remark}
Let $b:\term\to B$ and $y:\term\to Y$ be points of $B$ and $Y$. We denote by $f_b$ and $g_y$ the corresponding fibers of $f$ and $g$. Since in a topos $\cE$ colimits commute with base change, the fiber of $\fwd{f}{g}$ at $(b,y)$ can be proven to be the diagonal map
\[
g_y\to \intmap{f_b}{g_y}.
\]
This is one of the reasons why we call this map the fiberwise diagonal map.
\end{remark}

\begin{proposition}\label{prop:fiberwisediagonalorth}
Let $f$ and $g$ be maps in $\cE$. Then $f\sperp g$ if and only if $\fwd{f}{g}$ is an isomorphism.
\end{proposition}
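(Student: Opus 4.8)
The plan is to recognize that this proposition is, by design, almost immediate: the fiberwise diagonal map was defined precisely so as to package condition~(3) of Proposition~\ref{prop:equivfwortho}. So the entire argument consists in lining up two definitions, and no new work beyond the equivalences already established in Proposition~\ref{prop:equivfwortho} is required.

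Concretely, I would argue as follows. By Definition~\ref{defiberwiseorth0}, the relation $f \sperp g$ holds if and only if the equivalent conditions of Proposition~\ref{prop:equivfwortho} are satisfied; among these I would single out condition~(3), which asserts that the $f_{B\times Y}$-diagonal
\[
\Delta_{f_{B\times Y}}(g_{B\times Y}) : g_{B\times Y} \to \relintmap{B\times Y}{f_{B\times Y}}{g_{B\times Y}}
\]
in the slice topos $\cE\slice{B\times Y}$ is an isomorphism. On the other hand, Definition~\ref{def:fiberwise-diagonal} sets $\fwd{f}{g} = \Delta_{f_{B\times Y}}(g_{B\times Y})$. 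Since these are literally the same map, the biconditional ($f\sperp g$ if and only if $\fwd{f}{g}$ is an isomorphism) follows at once.

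The only point requiring a moment's care --- and the closest thing to an obstacle --- is checking that the map named in condition~(3) and the map named in Definition~\ref{def:fiberwise-diagonal} genuinely coincide, i.e.\ that the slice-level $A$-diagonal construction of Example~\ref{extrongorth}.\ref{ex:A-diagonal}, applied with $A = f_{B\times Y}$ to the object $g_{B\times Y}$ of $\cE\slice{B\times Y}$, is exactly $\intph{f_{B\times Y}}{g_{B\times Y}}_{B\times Y}$. This is a matter of unwinding notation: the terminal object of $\cE\slice{B\times Y}$ is $\id_{B\times Y}$, both $f_{B\times Y}$ and $g_{B\times Y}$ are regarded as objects of the slice via their structure maps to $B\times Y$, and the $A$-diagonal construction then reduces to the claimed slice internal pullback hom. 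With this identification made explicit there is nothing further to prove; all the genuine content lives in the seven-way equivalence of Proposition~\ref{prop:equivfwortho}, and the present proposition merely records that the fiberwise diagonal is an effective witness for the relation $\sperp$, exactly as the external and internal pullback homs $\ph{f}{g}$ and $\intph{f}{g}$ are witnesses for $\perp$ and $\iperp$.
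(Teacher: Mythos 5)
Your proof is correct and is essentially the paper's own argument: the paper disposes of the proposition in one line by noting it is exactly the content of Proposition~\ref{prop:equivfwortho}(3), which is precisely your reduction, and your added check that the map of condition~(3) coincides with the map of Definition~\ref{def:fiberwise-diagonal} is just a careful unwinding of notation the paper leaves implicit.
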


\begin{proof}
This is exactly the content of \ref{prop:equivfwortho}(3).
\end{proof}

We now arrive at our key technical result.
\begin{proposition}\label{prop:the-strange-adjunction}
The following formula is true in any topos:
\[
\fwd{f\pp g}{h} =  \fwd{f}{ \fwd{g}{h}}.
\]
\end{proposition}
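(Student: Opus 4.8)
The plan is to recognize the asserted identity as the \emph{fiberwise} incarnation of the closed monoidal exponential law $\intph{f\pp g}{h}=\intph{f}{\intph{g}{h}}$ recorded in the Remark after Lemma~\ref{lem:base-change-intph}, and to deduce it from that law by transporting all three maps into a single slice topos. Write $f\colon A\to B$, $g\colon C\to D$ and $h\colon X\to Y$, and set $W=B\times D\times Y$, the product of the three codomains. Pulling $f$, $g$ and $h$ back along the projections $W\to B$, $W\to D$ and $W\to Y$ produces objects $f_W$, $g_W$, $h_W$ of the slice $\cE\slice{W}$, where the internal pullback hom $\intph{-}{-}_W$ and its exponential law are available. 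The entire computation will take place there.

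First I would unwind the left-hand side. By Definition~\ref{def:fiberwise-diagonal}, $\fwd{f\pp g}{h}$ is the $(f\pp g)_W$-diagonal of $h_W$ formed in $\cE\slice{W}$, since the common target of $f\pp g$ and $h$ is exactly $W=(B\times D)\times Y$. Because colimits in a topos are stable under base change (cf.\ Example~\ref{example-po}.(6)), the external pushout product restricts over $W$ to the fiberwise join, $(f\pp g)_W=f_W\join_W g_W$. Applying Example~\ref{example-po}.(4) inside the slice topos $\cE\slice{W}$ then rewrites this object-to-terminal map as a pushout product, $\bigl(f_W\join_W g_W\to\term\bigr)=\bigl(f_W\to\term\bigr)\pp_W\bigl(g_W\to\term\bigr)$.

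With this in hand the exponential law in $\cE\slice{W}$ does the real work:
\[
\fwd{f\pp g}{h}
=\intph{(f_W\to\term)\pp_W(g_W\to\term)}{(h_W\to\term)}_W
=\intph{(f_W\to\term)}{\,\intph{(g_W\to\term)}{(h_W\to\term)}_W\,}_W .
\]
The inner term $\intph{(g_W\to\term)}{(h_W\to\term)}_W$ is by definition the $g_W$-diagonal of $h_W$; by the base-change compatibility of the internal pullback hom (Lemma~\ref{lem:base-change-intph}) it is the pullback of $\fwd{g}{h}$ along $W\to D\times Y$. The outer expression is then the $f_W$-diagonal of that map, which I claim is precisely $\fwd{f}{\fwd{g}{h}}$.

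The step I expect to be the main obstacle is exactly this last identification of the right-hand side. The fiberwise diagonal $\fwd{f}{\fwd{g}{h}}$ is a priori formed over the base attached to the codomain of $\fwd{g}{h}$, and one must check carefully, again via Lemma~\ref{lem:base-change-intph} and the naturality of the $A$-diagonal under base change, that it agrees on the nose with the nested pullback hom computed over $W$; all of the bookkeeping of projections and base changes is concentrated in this point. As a consistency check one can pass to isomorphisms: by Proposition~\ref{prop:fiberwisediagonalorth} each side is invertible exactly when the corresponding fiberwise orthogonality relation holds, so the two sides at least share the same invertibility behaviour. The equality of maps itself, however, is genuinely supplied by the exponential law displayed above rather than by any orthogonality argument.
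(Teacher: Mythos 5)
Your scaffold is sound and in fact runs parallel to the paper's own proof: the paper likewise transports everything to the slice over the product of the three codomains (its $\cE\slice{IJK}$ is your $\cE\slice{W}$), identifies $(f\pp g)_W$ with the fiberwise join $f_W\join_W g_W$, applies a hom--tensor identity inside the slice, and uses Lemma~\ref{lem:base-change-intph} for the base-change bookkeeping. But the step you defer as ``the main obstacle'' is not bookkeeping --- it is the actual mathematical content of the proposition, and the tools you name for it do not suffice. Lemma~\ref{lem:base-change-intph} only lets you pull \emph{both} arguments of an internal pullback hom back along a fixed map of bases, whereas here the two sides of your claimed identification live over genuinely different bases: your nested expression $\intph{f_W\to\term}{\intph{g_W\to\term}{h_W\to\term}_W}_W$ is computed over $W=B\times D\times Y$, while $\fwd{f}{\fwd{g}{h}}$ is, per Definition~\ref{def:fiberwise-diagonal}, computed over $B\times Q$, where $Q$ is the codomain in $\cE$ of $\fwd{g}{h}$ (an object mapping to $D\times Y$, not equal to $D\times Y$). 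Moreover the two sides are homs of different flavours: your outer $\intph{f_W\to\term}{-}_W$ is the $f_W$-diagonal of a \emph{map} in $\cE\slice{W}$, while $\fwd{f}{\fwd{g}{h}}$ is the relative diagonal of an \emph{object} of $\cE\slice{B\times Q}$. This mismatch is exactly why the proposition does not follow directly from the closed monoidal law $\intph{u\pp v}{w}=\intph{u}{\intph{v}{w}}$: the fiberwise diagonal $\fwd{-}{-}$ is not the internal hom of the monoidal structure on $\cE^{\to}$. The fact that bridges the two is precisely Lemma~\ref{lem:a-general-pullback} of the paper: the pullback square exhibiting $\relintmap{C}{A\times C}{B}$ as the pullback of $\intmap{A}{B}\to\intmap{A}{C}$ along the diagonal $C\to\intmap{A}{C}$ is exactly the statement that the $A$-diagonal of a map $v$ (Example~\ref{extrongorth}.(3)) agrees with the relative diagonal of $v$ regarded as an object of the slice over its codomain. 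The paper packages this, together with the description of the join as a pushout, into Lemma~\ref{lem:a-specific-pullback}, which is what powers its ``special case''; without this ingredient your exponential law produces a right-hand side of the wrong shape, and the proof does not close.

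To complete your argument, keep everything you have, then prove the comparison in the form of Lemma~\ref{lem:a-general-pullback} (factor the square through $\intmap{A}{B}\times C\to\intmap{A}{C}\times C$ and use that base change to $\cE\slice{C}$ preserves internal homs and limits), and apply it once to the outer hom: after Lemma~\ref{lem:base-change-intph} identifies the inner term $\intph{g_W\to\term}{h_W\to\term}_W$ with the pullback of $\fwd{g}{h}$ to $\cE\slice{W}$, Lemma~\ref{lem:a-general-pullback} (applied in the topos $\cE\slice{W}$, with $C$ the codomain of that map) converts the $f_W$-diagonal of the map into the relative diagonal of the corresponding object over its codomain, whose base is exactly $B\times Q$; this lands on $\fwd{f}{\fwd{g}{h}}$ on the nose. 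With that lemma supplied, your route is a correct and slightly streamlined variant of the paper's: quoting the closed monoidal structure of $\pp$ and $\intph{-}{-}$ in the slice replaces the join-unfolding half of the paper's special-case computation, but the one nontrivial pullback identification --- the half you left open --- is unavoidable in either version.
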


For the proof of this proposition we need the following two auxiliary lemmas.

\begin{lemma}\label{lem:a-general-pullback}
For all $A$, $C$ and  $B \to C$ in any topos, the following square 
  \[
    \begin{tikzcd}
      \relintmap{C}{A \times C}{B} \ar[r] \ar[d] \pbmark & \intmap{A}{B} \ar[d] \\
      C \ar[r] & \intmap{A}{C},
    \end{tikzcd}
  \]
  where $\intmap{-}{-}_C$ is the internal hom in $\cE\slice{C}$
  and where the bottom map is the diagonal map,
is a pullback.
\end{lemma}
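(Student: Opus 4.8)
The plan is to verify the pullback property by Yoneda: a commutative square in $\cE$ is cartesian if and only if, for every object $R\in\cE$, applying $\map{R}{-}$ yields a pullback of spaces, because the functors $\map{R}{-}$ preserve limits and are jointly conservative. I would therefore fix $R$, compute the four mapping spaces attached to the corners of the square, and exhibit a natural identification of $\map{R}{\relintmap{C}{A\times C}{B}}$ with the pullback $\map{R}{\intmap{A}{B}}\times_{\map{R}{\intmap{A}{C}}}\map{R}{C}$ that is compatible with the structure maps of the square.

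First I would analyze the lower-right corner together with the two maps into it. By the defining adjunction $(-\times A)\dashv\intmap{A}{-}$ in $\cE$ one has $\map{R}{\intmap{A}{B}}=\map{R\times A}{B}$ and $\map{R}{\intmap{A}{C}}=\map{R\times A}{C}$, with the right-hand vertical map induced by postcomposition with $B\to C$. The diagonal $C\to\intmap{A}{C}$ is, under this adjunction, transpose to the projection $C\times A\to C$, so the induced map $\map{R}{C}\to\map{R\times A}{C}$ sends $\gamma\colon R\to C$ to $\gamma\circ\mathrm{pr}_R$. Hence a point of the pullback $\map{R}{\intmap{A}{B}}\times_{\map{R}{\intmap{A}{C}}}\map{R}{C}$ is a triple consisting of a map $\phi\colon R\times A\to B$, a map $\gamma\colon R\to C$, and a homotopy $(B\to C)\circ\phi\simeq\gamma\circ\mathrm{pr}_R$; equivalently, this pullback is the total space of a map down to $\map{R}{C}$ whose fiber over $\gamma$ is the space of maps $R\times A\to B$ over $C$, where $R\times A$ carries the structure map $\gamma\circ\mathrm{pr}_R$.

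Next I would compute the upper-left corner with the same fibration in mind. Because $\relintmap{C}{A\times C}{B}$ lives over $C$, the space $\map{R}{\relintmap{C}{A\times C}{B}}$ fibers over $\map{R}{C}$ via the projection to $C$, and the fiber over a given $\gamma\colon R\to C$ is the relative mapping space $\relmap{C}{R_\gamma}{\relintmap{C}{A\times C}{B}}$, where $R_\gamma$ denotes $R$ regarded as an object of $\cE\slice{C}$ via $\gamma$. The defining adjunction of the internal hom in $\cE\slice{C}$ gives $\relmap{C}{R_\gamma}{\relintmap{C}{A\times C}{B}}=\relmap{C}{R_\gamma\times_C(A\times C)}{B}$, and since $A\times C\to C$ is the projection one has $R_\gamma\times_C(A\times C)=R\times A$ with structure map $\gamma\circ\mathrm{pr}_R$. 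Thus the fiber over $\gamma$ is again exactly the space of maps $R\times A\to B$ over $C$ with that same structure map, matching the computation of the previous paragraph.

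Comparing the two descriptions, both $\map{R}{\relintmap{C}{A\times C}{B}}$ and the pullback are the total space over $\map{R}{C}$ with identical fibers, and one checks that the resulting equivalence is natural in $R$ and compatible with the projection to $\map{R}{C}$ and with the evaluation map to $\map{R}{\intmap{A}{B}}$. The main obstacle, and the step requiring the most care, is precisely this bookkeeping of the ``over $C$'' structure: translating external maps $R\to\relintmap{C}{A\times C}{B}$ in $\cE$ into relative maps in $\cE\slice{C}$ through the fibration over $\map{R}{C}$, and verifying that the identification $R_\gamma\times_C(A\times C)=R\times A$ is compatible with the counit so that the top horizontal comparison map really is the transpose of the evaluation $\relintmap{C}{A\times C}{B}\times A\to B$. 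Once the fibers and the cone maps are matched naturally in $R$, Yoneda yields that the square of spaces is a pullback for every $R$, and hence that the original square is cartesian in $\cE$.
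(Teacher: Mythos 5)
Your proof is correct, but it takes a genuinely different route from the paper's. You externalize the statement: you test the square against $\map{R}{-}$ for every $R \in \cE$, unwind all four corners via the defining adjunctions of $\intmap{A}{-}$ and of the internal hom in $\cE\slice{C}$, and match the fibers of the two resulting total spaces over $\map{R}{C}$ (on one side, maps $R \to \relintmap{C}{A\times C}{B}$ lying over $\gamma: R \to C$, computed by the slice adjunction and the identification $R_\gamma \times_C (A\times C) = R\times A$; on the other, maps $R\times A \to B$ together with a homotopy to $\gamma\circ\mathrm{pr}_R$ after composing with $B\to C$). The paper instead argues internally: it factors the square through the middle column $\intmap{A}{B}\times C \to \intmap{A}{C}\times C$, notes the right-hand square is an evident pullback, identifies $\intmap{A}{B}\times C = \relintmap{C}{A\times C}{B\times C}$ using that base change $\cE \to \cE\slice{C}$ preserves internal homs, and then exhibits the left-hand square as the image of the cartesian graph square of $B \to C$ (that is, $B \to B\times C$ over $C \to C\times C$ in $\cE\slice{C}$) under the limit-preserving functor $\relintmap{C}{A\times C}{-}$. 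The internal argument buys brevity and sidesteps exactly the coherence bookkeeping you rightly flag as the delicate point; your external argument buys transparency, since it uses nothing beyond the universal properties and Yoneda, and makes explicit what a point of each corner is. Two things you should make explicit to close your write-up: first, the comparison map into the pullback must be constructed before comparing fibers (you correctly identify it as the transpose of the evaluation $\relintmap{C}{A\times C}{B}\times A \to B$, obtained by forgetting the slice evaluation down to $\cE$); second, you are invoking the fact that a map in $\cS\slice{S}$ inducing equivalences on fibers over every point of $S$ is an equivalence, which holds by the equivalence between $\cS\slice{S}$ and space-valued functors on $S$ and deserves a citation or a sentence.
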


\begin{proof}
Using $C = \intmap{A\times C}{C}_C$ at the bottom left, we can factor the square as
  \[
    \begin{tikzcd}
      \relintmap{C}{A \times C}{B} \ar[r] \ar[d]  & \intmap{A}{B}\times C \ar[d] \ar[r] & \intmap{A}{B} \ar[d] \\
      \intmap{A\times C}{C}_C \ar[r] & \intmap{A}{C}\times C\ar[r] & \intmap{A}{C},
    \end{tikzcd}
  \]
Then, the right square is obviously cartesian.

To prove that the left square is also cartesian we use first the fact that the base change $\cE\to \cE\slice{C}$ preserves internal homs; this shows that $\intmap{A\times C}{B\times C}_C = \intmap{A}{B}\times C$.
Then the left square is cartesian as the image of the cartesian square in $\cE\slice{C}$
  \[
    \begin{tikzcd}
      B \ar[r] \ar[d]  & B\times C \ar[d] \\
      C \ar[r] & C\times C
    \end{tikzcd}
  \]
by the functor $\intmap{A\times C}{-}_C$ which preserves limits.
\end{proof}

\begin{lemma}\label{lem:a-specific-pullback}
The square
  \[
    \begin{tikzcd}
      \relintmap{\intmap{Y}{Z}}{X \times \intmap{Y}{Z} }{Z} \ar[r] \ar[d] \pbmark & \intmap{X}{Z}\ar[d] \\
      \intmap{Y}{Z} \ar[r] & \intmap{X \times Y}{Z}
    \end{tikzcd}
  \]
is a pullback. Hence, there is a canonical isomorphism
\[
  \intmap{X\join Y}{Z}=\relintmap{\intmap{Y}{Z} }{X \times \intmap{Y}{Z}}{Z}.
\]
\end{lemma}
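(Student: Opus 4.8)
The plan is to realize this square as a single instance of Lemma~\ref{lem:a-general-pullback}, followed by a currying (exponential-law) identification of its bottom-right corner. First I would instantiate Lemma~\ref{lem:a-general-pullback} with $A=X$, $C=\intmap{Y}{Z}$, and $B=Z$, where $Z$ is regarded as an object over $C=\intmap{Y}{Z}$ via the $Y$-diagonal $\Delta_Y(Z):Z\to\intmap{Y}{Z}$ of Example~\ref{extrongorth}.\ref{ex:A-diagonal} (the map adjoint to the projection $Y\times Z\to Z$). With these choices the general lemma produces a pullback square
\[
\begin{tikzcd}
  \relintmap{\intmap{Y}{Z}}{X \times \intmap{Y}{Z}}{Z} \ar[r] \ar[d] \pbmark & \intmap{X}{Z} \ar[d] \\
  \intmap{Y}{Z} \ar[r] & \intmap{X}{\intmap{Y}{Z}},
\end{tikzcd}
\]
whose right-hand vertical map is post-composition with $\Delta_Y(Z)$ and whose bottom map is the $X$-diagonal $\Delta_X(\intmap{Y}{Z})$.

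Next I would apply the cartesian-closed currying isomorphism $\intmap{X}{\intmap{Y}{Z}}\cong\intmap{X\times Y}{Z}$ to identify the bottom-right corner with the one appearing in the statement. The one genuinely careful step, and the main point to verify, is that under this isomorphism the two incoming maps become the expected ones: the right-hand map $\intmap{X}{Z}\to\intmap{X}{\intmap{Y}{Z}}$ (post-composition with $\Delta_Y(Z)$) corresponds to precomposition with the projection $\mathrm{pr}_X:X\times Y\to X$, while the bottom map $\intmap{Y}{Z}\to\intmap{X}{\intmap{Y}{Z}}$ (the $X$-diagonal) corresponds to precomposition with $\mathrm{pr}_Y:X\times Y\to Y$. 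Both identifications are direct adjunction bookkeeping: transposing a map $X\to Z$ through $\Delta_Y(Z)$ yields the family constant in $Y$, whose currying is the original map precomposed with $\mathrm{pr}_X$, and symmetrically the $X$-diagonal transposes to precomposition with $\mathrm{pr}_Y$. Since a pullback square is unchanged when one corner together with its two incoming maps is replaced by an isomorphic copy, this produces exactly the square of the statement.

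Finally, the displayed canonical isomorphism follows formally. The contravariant internal-hom functor $\intmap{-}{Z}$ carries colimits in its first variable to limits, so applying it to the pushout $X\join Y=X\sqcup_{X\times Y}Y$ (the pushout of $X\lto X\times Y\to Y$) turns the two projections $\mathrm{pr}_X,\mathrm{pr}_Y$ into precisely the cospan $\intmap{X}{Z}\to\intmap{X\times Y}{Z}\lto\intmap{Y}{Z}$ whose pullback is the top-left corner above. Hence $\intmap{X\join Y}{Z}\cong\relintmap{\intmap{Y}{Z}}{X\times\intmap{Y}{Z}}{Z}$, as claimed.

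I expect the only nontrivial part to be the bookkeeping in the second paragraph: confirming that the $Y$-diagonal-induced map and the $X$-diagonal transpose, respectively, to the two projection-precomposition maps, so that the curried square agrees on the nose with the one in the statement. Everything else is a formal invocation of Lemma~\ref{lem:a-general-pullback} and of the cartesian-closed structure of the topos.
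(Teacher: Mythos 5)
Your proposal is correct and follows essentially the same route as the paper: both instantiate Lemma~\ref{lem:a-general-pullback} with $A=X$, $B=Z$, $C=\intmap{Y}{Z}$ (the structure map $Z\to\intmap{Y}{Z}$ being the $Y$-diagonal) and then obtain the displayed isomorphism from the fact that $\intmap{-}{Z}$ sends the pushout $X\lto X\times Y\to Y$ defining $X\join Y$ to the pullback of the corresponding cospan. The only difference is that the paper leaves the currying identification $\intmap{X}{\intmap{Y}{Z}}\cong\intmap{X\times Y}{Z}$, together with the matching of the two incoming maps with precomposition by the projections, entirely implicit, whereas you verify this bookkeeping explicitly --- a harmless and indeed welcome elaboration.
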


\begin{proof}
  Setting $A = X$, $B = Z$ and $C = \intmap{Y}{Z}$ in the previous
  lemma we find that the square above is a pullback as claimed. Since
  the join is the pushout of the projections
\[ 
  X\leftarrow X\times Y\to Y,
\]
the pullback of this square is canonically isomorphic to $\intmap{X\join Y}{Z}$.
\end{proof}

\begin{proof}[Proof of Proposition~\ref{prop:the-strange-adjunction}]
We consider first the special case where the maps are of the following form
\begin{align*}
    f: X \to \term\ ,\ g: Y \to \term\ , \ h: Z \to \term.
\end{align*}
Then the map $\fwd{f \pp g}{h}$ becomes the $X\join Y$-diagonal of $Z$
  \[
  \fwd{ X \join Y \to \term}{Z \to \term} = Z \to \intmap{X\join Y}{Z}.
  \]
On the other hand, the map $\fwd{f}{\fwd{g}{h}}$ becomes
  \[ \fwd{ X  \to \term}{Z \to \intmap{Y}{Z} } =  Z \to \relintmap{\intmap{Y}{Z}}{X \times \intmap{Y}{Z}}{Z} . \]
Lemma~\ref{lem:a-specific-pullback} shows that these two maps are the same. 
This proves our claim in the special case.

We prove the general case by arguing fiberwise, i.e. by viewing our maps as objects in the respective slice categories and then appealing to the special case above.
We introduce the following convenient notation.
First, we will denote the cartesian product of two objects $I$ and $J$ in $\cE$ by concatenation $IJ$.
Then, for a map $f:X\to I$ in a topos $\cE$, we will abuse notation and denote by $X$ the corresponding object in $\cE\slice{I}$.
If another object $J\in \cE$ is given, we will denote by $X_J$ the base change of $X\in \cE\slice{I}$ to $\cE\slice{IJ}$ along the projection $I\times J\to I$, i.e. $X_J$ is the map $X\times J\to I\times J$.

For two maps $f:X\to I$ and $g:Y\to J$, the map $f \pp g$ in $\cE$ corresponds to the object 
\[
X_J \join Y_I
\]
in $\cE\slice{IJ}$, where the join is also computed in $\cE\slice{IJ}$. For a third object $K$, it is easy to compute that
\[
(X_J \join Y_I)_K = X_{JK} \join Y_{IK}
\]
in $\cE\slice{IJK}$.

Similarly, for two maps $g:Y\to J$ and $h:Z\to K$, the map $\fwd{g}{h}$ is defined as the map in $\cE\slice{JK}$
\[
\intph{Y_K\to \term}{Z_J\to \term} 
\]
where the pullback hom is computed in $\cE\slice{JK}$.
For a third object $I\in \cE$, because the pullback functor $\cE\slice{JK}\to \cE\slice{IJK}$ preserves exponentials, we have also 
\begin{align*}
   \left(\hspace{-5mm}\phantom{n^{n^n}}\intph{Y_K\to \term}{Z_J\to \term}\right)_I &= \left(\hspace{-6mm}\phantom{n^{n^n}}Z_J \to \intmap{Y_K}{Z_J}\right)_I = Z_{IJ} \to \intmap{Y_{IK}}{Z_{IJ}} \\
       & = \intph{Y_{IK}\to \term}{Z_{IJ}\to \term}
\end{align*}
in $\cE\slice{IJK}$.
Finally, we obtain the following canonical isomorphisms:
\[
\begin{array}{ll}
\rule[-1ex]{0pt}{4ex} \fwd{f\pp g}{h} & \textrm{viewed as a map in $\cE\slice{IJK}$} \\ 
\rule[-1ex]{0pt}{4ex} = \intph{(X_J \join Y_I)_K \to \term}{Z_{IJ}\to \term} & \textrm{join in $\cE\slice{IJ}$, bracket in $\cE\slice{IJK}$} \\ 
\rule[-1ex]{0pt}{4ex} = \intph{X_{JK} \join Y_{IK} \to \term}{Z_{IJ} \to \term} & \textrm{computed in $\cE\slice{IJK}$} \\ 
\rule[-1ex]{0pt}{4ex} = \intph{X_{JK}\to \term}{\intph{Y_{IK} \to \term}{Z_{IJ} \to \term}} & \textrm{special case applied to the topos $\cE\slice{IJK}$} \\ 
\rule[-1ex]{0pt}{4ex} = \intph{X_{JK}\to \term}{\left(\intph{Y_{K} \to \term}{Z_{J} \to \term}\right)_I} & \textrm{inside bracket computed in $\cE\slice{JK}$} \\ 
\rule[-1ex]{0pt}{4ex} = \fwd{f}{\fwd{g}{h}} & \textrm{viewed as a map in $\cE\slice{IJK}$}.
\end{array}
\]
\end{proof}

\subsection{Modalities and generalized Blakers-Massey theorems}
Given a class of maps $\cM$ of $\cE$, we write $\cM^{\perp}$ for the class of
maps which are externally right orthogonal to every map of $\cM$.  Similarly,
the class ${}^{\perp}\cM$ denotes the class of maps externally left orthogonal
to every map of $\cM$.

Recall that a {\em factorization system} on a category $\cE$ is the
data of a pair $(\cL,\cR)$ of classes of maps in $\cE$ such that
\begin{enumerate}
\item every map $f$ in $\cE$ can be factored in $f = rl$ where $l\in \cL$  and $r\in \cR$, and
\item $\cL^{\perp}=\cR$ and $\cL=\mbox{}^{\perp}\cR$.
\end{enumerate}
In a factorization system, the right class is always stable by base change.
\begin{definition}\label{def:modality}
  Let $\cE$ be a topos. A {\em modality} on $\cE$ is a factorization
  system $(\cL,\cR)$ such that the left class $\cL$ is also stable by
  base change.
\end{definition}

\begin{proposition}
  A factorization system $(\cL,\cR)$ is a modality if and only if the
  stronger orthogonality property $\cL\sperp \cR$ holds.
\end{proposition}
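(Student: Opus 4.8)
The plan is to prove the two implications separately, leaning on the characterizations of $\sperp$ collected in Proposition~\ref{prop:equivfwortho} together with the fact---already recorded in the excerpt---that in any factorization system the right class $\cR$ is automatically stable under base change. Consequently the \emph{only} content of the modality condition is the base-change stability of the left class $\cL$, and the whole point is that fiberwise orthogonality is precisely the device that encodes this stability. Throughout I use the defining equalities $\cR = \cL^{\perp}$ and $\cL = {}^{\perp}\cR$ of a factorization system.

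For the implication $\cL \sperp \cR \Rightarrow$ modality, I would fix $f \in \cL$ and an arbitrary base change $f'$ of $f$, and show $f' \in \cL = {}^{\perp}\cR$; that is, $f' \perp g$ for every $g \in \cR$. Given such a $g$, the hypothesis gives $f \sperp g$, and Remark~\ref{remarksonfiberwiseorth}(1) then says precisely that every base change of $f$ is externally orthogonal to $g$ (indeed to every base change of $g$). In particular $f' \perp g$. Letting $g$ range over $\cR$ yields $f' \in \cL$, so $\cL$ is stable under base change and $(\cL,\cR)$ is a modality.

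For the converse, assume $\cL$ is stable under base change and fix $f \in \cL$ and $g \in \cR$; I want $f \sperp g$. Here I would invoke Condition~(7) of Proposition~\ref{prop:equivfwortho}, which reduces $f \sperp g$ to checking $f_Z \perp g$ for every map $Z \to B$. But $f_Z$ is a base change of $f$, hence lies in $\cL$ by hypothesis, and therefore $f_Z \perp g$ because $g \in \cR = \cL^{\perp}$. Since $Z \to B$ was arbitrary, Condition~(7) delivers $f \sperp g$, and thus $\cL \sperp \cR$.

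I do not expect a genuine obstacle: once the right reformulation of $\sperp$ is in hand, each direction is a one-line consequence of a factorization-system axiom plus base-change stability. The only point requiring care is to select the correct characterization in each direction---Remark~\ref{remarksonfiberwiseorth}(1) going one way and Condition~(7) of Proposition~\ref{prop:equivfwortho} going the other---and to remember that the stability of $\cR$ comes for free, so that the left class $\cL$ carries all the weight of the statement.
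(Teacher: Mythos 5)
Your proof is correct and is essentially the paper's argument made explicit: the paper simply cites Proposition~\ref{prop:equivfwortho}(7) as saying that $\cL\sperp\cR$ amounts exactly to base-change stability of $\cL$, and your two implications (using Remark~\ref{remarksonfiberwiseorth}(1) one way and Condition~(7) the other, together with $\cL={}^{\perp}\cR$ and $\cR=\cL^{\perp}$) are the unpacked version of that one-line appeal. Your observation that the stability of $\cR$ is automatic, so all the content lies in $\cL$, matches the paper's framing precisely.
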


\begin{proof}
  The equivalence is given by Proposition \ref{prop:equivfwortho}.(7)
  which states exactly that the left class $\cL$ is stable by base
  change.
\end{proof}

An important source of modalities on a topos $\cE$ are provided by
the accessible left exact localizations of $\cE$. (These are, in fact,
exactly the \emph{subtopoi} of $\cE$, though we will not have occasion
to use this observation.)  To recall the construction, let
$F' : \cE \to \cE'$ be a functor with fully-faithful right adjoint
$i : \cE' \to \cE$.  As $i$ is fully-faithful, it is convenient to
work with the associated endofunctor $F = i \circ F'$, identifying
$\cE'$ with its corresponding reflective subcategory in $\cE$. We
now have the following standard definitions:

\begin{definition}
  \label{defn:lex-classes}
  Let $F : \cE \to \cE$ be as above.
  \begin{enumerate}
  \item A map $f : A \to B$ is said to be \emph{$F$-local} if
    the square
    \[
      \begin{tikzcd}
        A \ar[d, "f"'] \ar[r] & F(A) \ar[d, "F(f)"] \\
        B \ar[r] & F(B)
      \end{tikzcd}
    \]
    is cartesian.
  \item A map $f : A \to B$ is an $F$-equivalence if $F(f)$ is
    an isomorphism.
  \end{enumerate}
\end{definition}

\begin{lemma}\label{lem:lexloc-yields-modality}
  Let $F$ be a left exact localization of a topos $\cE$. If we let
  $\cL$ be the class of $F$-equivalences and $\cR$ the class of
  $F$-local maps, then $(\cL,\cR)$ forms a modality on $\cE$.
\end{lemma}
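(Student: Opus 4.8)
The plan is to verify directly the two defining conditions of a modality in Definition~\ref{def:modality}: that $(\cL,\cR)$ is a factorization system, and that the left class $\cL$ is stable under base change. Throughout I would use the standard consequences of $F$ being a \emph{left exact localization}: that $F$ is idempotent, so that each unit $\eta_A : A \to F(A)$ is an $F$-equivalence and each $F(A)$ is $F$-local; that, unwinding Definition~\ref{defn:lex-classes}, a map $g : X \to Y$ is $F$-local exactly when $X \cong Y \times_{F(Y)} F(X)$, i.e. $g$ is the base change of $F(g)$ along $\eta_Y$; and, crucially, that $F$ preserves finite limits.

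For the factorization, given $f : A \to B$ I would set $E := B \times_{F(B)} F(A)$ and factor $f$ as $A \xrightarrow{l} E \xrightarrow{r} B$, where $l$ is the canonical map determined by $f$ and $\eta_A$ (compatible via naturality of $\eta$), and $r$ is the projection. That $r \in \cR$ is the assertion $E \cong B \times_{F(B)} F(E)$: applying the left exact functor $F$ to the pullback defining $E$ and using idempotency to identify $F(E) \cong F(A)$ recovers precisely the defining pullback. That $l \in \cL$ follows from the same identification, under which $F(l)$ becomes $F(\eta_A)$, hence an isomorphism.

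The heart of the argument is the external orthogonality $f \perp g$ for $f \in \cL$ and $g \in \cR$, which I would obtain by showing that the gap map $\ph{f}{g}$ of mapping spaces is an equivalence. Writing $g$ as the base change $X \cong Y \times_{F(Y)} F(X)$, every mapping space into $X$ splits as a fibre product of $\map{-}{F(Y)}$ and $\map{-}{F(X)}$; since $F(X)$ and $F(Y)$ are $F$-local, the universal property of the reflection gives $\map{Z}{F(X)} \cong \map{F(Z)}{F(X)}$ and likewise for $F(Y)$, and because $F(f)$ is an isomorphism, restriction along $f$ then induces equivalences $\map{B}{F(X)} \xrightarrow{\sim} \map{A}{F(X)}$ and $\map{B}{F(Y)} \xrightarrow{\sim} \map{A}{F(Y)}$. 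Substituting these into the two sides of $\ph{f}{g}$ and simplifying the iterated fibre products identifies the gap map with an equivalence, so $f \perp g$. This yields $\cL \subseteq {}^\perp\cR$ and $\cR \subseteq \cL^\perp$; since both classes are plainly closed under retracts (isomorphisms are, and this is inherited by $F$-equivalences and by maps with cartesian naturality square), the usual retract argument applied to the factorization above upgrades these inclusions to the equalities $\cL = {}^\perp\cR$ and $\cR = \cL^\perp$, so $(\cL,\cR)$ is a factorization system.

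Finally, stability of $\cL$ under base change is immediate from left exactness, and this is where that hypothesis does its essential work for the modality structure: if $f \in \cL$ and $B' \to B$ is arbitrary, then $F$ preserves the pullback $A' := A \times_B B'$, so the induced map $F(A') \cong F(A) \times_{F(B)} F(B') \to F(B')$ is the base change of the isomorphism $F(f)$ along $F(B') \to F(B)$, hence again an isomorphism. Thus every base change of an $F$-equivalence is an $F$-equivalence, and $(\cL,\cR)$ is a modality. I expect the orthogonality computation of the third paragraph to be the only genuinely delicate step; the factorization and the base-change stability are essentially formal once $F$ is known to be idempotent and left exact.
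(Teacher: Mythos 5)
Your proof is correct, and its skeleton coincides with the paper's: you factor $f$ through the pullback $B \times_{F(B)} F(A)$ exactly as the paper does, your mapping-space computation for $f \perp g$ is precisely the paper's chain $\ph{f}{g} = \ph{f}{Fg} = \ph{Ff}{Fg}$ written out in unpacked form (base-change description of $g$, then the universal property of the reflection, then invertibility of $F(f)$), and base-change stability of $\cL$ via left exactness is identical. The one place you genuinely diverge is in upgrading $\cL \subseteq {}^{\perp}\cR$ and $\cR \subseteq \cL^{\perp}$ to equalities: the paper argues directly, taking $f \in {}^{\perp}\cR$ (resp.\ $g \in \cL^{\perp}$), factoring it, extracting the unique lift against the $\cR$-part (resp.\ $\cL$-part) of the factorization, and checking by hand that this lift inverts $F(f)$ (resp.\ inverts $u$, so that $g$ is a pullback of $F(g)$); you instead invoke the classical retract argument, showing that the lift exhibits $f$ as a retract of the $\cL$-part and $g$ as a retract of the $\cR$-part, and then appeal to retract-closure of both classes. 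Your route is the more standard and formal one, and it transfers verbatim to any orthogonal factorization system candidate; its cost is that you must justify retract-closure, which you do correctly but tersely --- it rests on the facts that retracts of equivalences are equivalences (applied after $F$, since $F$ preserves retract diagrams) and that retracts of cartesian squares are cartesian, the latter being the standard statement that limit diagrams are closed under retracts. The paper's route avoids any appeal to retract stability and produces the inverses explicitly, at the cost of two separate ad hoc lifting verifications. Both are complete proofs; yours trades a little extra background for a more uniform argument.
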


\begin{proof}
  Given a map $f : A \to B$, one may produce directly a factorization
  $f = v \circ u$ by first applying $F$ and defining $C$, $u$ and $v$
  by forming the pullback as in the following diagram
  \[
    \begin{tikzcd}
      A \ar[dr, "u"] \ar[ddr, "f"', bend right] & & \\
      & C \ar[r] \ar[d, "v"] \pbmark & F(A) \ar[d, "F(f)"] \\
      & B \ar[r] & F(B)
    \end{tikzcd}
  \]
  The map $v$ is $F$-local by construction, and one immediately checks
  that $u$ is an $F$-equivalence using the idempotence of $F$.  That
  the class of $F$-equivalences is stable by base change is clear
  from the fact that $F$ preserves finite limits.

  To check orthogonality we use the following observation.  Let
  $g\in\cR$ be any $F$-local map. Note that for any map $f$ we have
\[
    \ph{f}{g}=\ph{f}{Fg}=\ph{Ff}{Fg},
\]
  where the first equality comes from the fact that $g$ is a base
  change of $Fg$ and the second equality comes from the universal
  property of the localization $F$.  It follows that if $f$
  is an $F$-equivalence, so that $F(f)$ is an isomorphism, then
  $\ph{Ff}{Fg}=\ph{f}{g}$ is an isomorphism. This shows that
  $\cL \subseteq{}^\perp\cR$ and $\cL^\perp\supseteq\cR$.
  
  Now let $f: A \to B \in {}^\perp\cR$.  We must show that $f$ is
  an $F$-equivalence.  Consider the diagram
  \[
    \begin{tikzcd}
      A \ar[r, "u"] \ar[d, "f"'] & C \pbmark \ar[d, "v"] \ar[r] & F(A) \ar[d, "F(f)"] \\
      B \ar[r, equal] \ar[ur, dotted, "\exists ! h"] & B \ar[r] & F(B)
    \end{tikzcd}
  \]
  where $C$, $u$ and $v$ are defined by pullback.  Since $v$ is
  $F$-local by construction, we have $f \perp v$.  Hence we obtain a
  unique lift $h$.  One can easily check that $F(h)$ provides an
  inverse to $F(f)$ which shows that $F(f)$ is an equivalence.  This
  shows that $\cL = {}^\perp\cR$.

  Finally, let $g : X \to Y \in \cL^\perp$.  Factor $g$ as $g = v \circ u$
  as in the diagram
  \[
    \begin{tikzcd}
      X \ar[r, "u"] \ar[d, "g"'] & C \pbmark \ar[d, "v"] \ar[r] & F(X) \ar[d, "F(g)"] \\
      Y \ar[r, equal] & Y \ar[r] & F(Y)
    \end{tikzcd}
  \]
  so that $v$ is $F$-local and $u$ is an $F$-equivalence by
  construction.  We would like to show that the map $u$ is an
  isomorphism, as this implies that $g$ is $F$-local.  But now, we
  have a unique lift $h$ in the following diagram
  \[
    \begin{tikzcd}
      X \ar[d, "u"'] \ar[r, equal] & X \ar[d, "g"] \ar[d] \\
      C \ar[r, "v"'] \ar[ur, dotted, "\exists ! h"] & Y
    \end{tikzcd}
  \]
  since $u \perp g$ by assumption and one readily checks that $h$ is the
  required inverse.  This shows that $\cR = \cL^\perp$ and completes the
  proof.
\end{proof}

\begin{example}\label{exmp:Pn-yields-modality}
  Goodwillie's $n$-excisive approximation construction $P_n$ is a left
  exact localization of the $\infty$-topos $[\Cat{C},\Cat{S}]$ for
  some small category \Cat{C} with finite colimits and a terminal
  object. Hence, the $P_n$-equivalences and the $P_n$-local maps form
  a modality. This example is developed in detail
  Section~\ref{sec:goodwillie-loc}, see
  Definition~\ref{def:nexcisivemodality}.
\end{example}

Let $(\cL, \cR)$ on a topos $\cE$ and suppose we are give a
commutative square
\begin{equation}\label{eqn:the-square}
  \begin{tikzcd}
    Z \ar[d, "f"'] \ar[r, "g"] & Y \ar[d, "k"] \\
    X \ar[r, "h"] & W 
  \end{tikzcd}
\end{equation}

\begin{definition}
The square~\eqref{eqn:the-square}
is said to be {\it $\cL$-cartesian} if the gap map
\[
   \gap{f}{g} : Z \to X \times_W Y
\]
is in $\cL$.
The square is called {\it $\cL$-cocartesian} if the cogap map
\[
  \cogap{h}{k} : X\cup_Z Y\to W
\]
is in $\cL$.
\end{definition}

Given a map $f:X\to Y$, its {\it diagonal $\Delta f$} is the map
\[ 
    \Delta f : X\to X\times_YX 
\]
induced by pulling back $f$ along itself. In particular
$\Delta(X\to\term)$ is the classical diagonal $X\to X\times X$.

In \cite{ABFJ:gbm}, the following two facts were proven about this situation:
\begin{theorem}[Blakers-Massey {\cite[Thm. 4.0.1]{ABFJ:gbm}}]\label{thm:gen-bm}
Let Diagram~\eqref{eqn:the-square} be a pushout square.
Suppose that $\Delta f\pp \Delta g \in \cL$.  Then the square is $\cL$-cartesian.
\end{theorem}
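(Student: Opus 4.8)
The plan is to show that the cartesian gap map $\gamma := \gap{f}{g}\colon Z \to X\times_W Y$ lies in $\cL$, and I would begin by localizing the problem. Since $(\cL,\cR)$ is a factorization system we have $\cL = {}^{\perp}\cR$, so it is equivalent to check that $\gamma \perp r$ for every $r\in\cR$; but the more useful feature is that, $(\cL,\cR)$ being a modality, its left class is stable under base change (Definition~\ref{def:modality}), and in fact $\cL$-membership satisfies descent. Concretely, I would pull the entire pushout square back along the cover $X\sqcup Y \twoheadrightarrow W$. Because colimits in a topos are stable under base change, each pullback is again a pushout square, and by descent it suffices to prove that the gap maps of these pulled-back squares lie in $\cL$. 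This replaces the global statement by a fiberwise one, in which one of the two legs has been trivialized and the fibers become computable.

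The heart of the argument is then to identify those fibers, and this is where the hypothesis enters. The mechanism is the formula of Example~\ref{example-po}.\ref{fib-join}: the fiber of a pushout product $u\pp v$ over a point is the \emph{join} of the fibers of $u$ and of $v$. Applied to $\Delta f \pp \Delta g$, it says that the fiberwise data controlled by the hypothesis is a join of the (loop-space–type) fibers of the diagonals $\Delta f$ and $\Delta g$. On the other side, I would use the van Kampen / descent description of the pushout $W$ to express the fibers of the legs $h,k$, and hence the comparison between $Z$ and $X\times_W Y$, as an iterated colimit whose basic building block is precisely this join. In the motivating special case $X=Y=\term$ one has $W=\Sigma Z$ and $X\times_W Y=\Omega\Sigma Z$, so that $\gamma$ is the James map $Z\to\Omega\Sigma Z$, whose defect is built from iterated joins of $\Omega Z$; the general case is the relative version of the same picture.

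With the fiber identification in hand the conclusion is formal. The left class $\cL$ of a modality is closed under base change, cobase change, composition, coproducts and sequential colimits in the arrow category; since the defect of $\gamma$ is assembled from $\Delta f \pp \Delta g$ by exactly such operations, the hypothesis $\Delta f \pp \Delta g \in \cL$ propagates to give that each pulled-back gap map is in $\cL$. Reassembling over the cover $X\sqcup Y \twoheadrightarrow W$, once more by descent, yields the global conclusion that $\gamma\in\cL$, i.e. that the square is $\cL$-cartesian.

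The step I expect to be the genuine obstacle is the fiber identification of the second paragraph: making precise, via descent in the $\infty$-topos, that the failure of $\gamma$ to be an equivalence is governed by the fiberwise join encoded in $\Delta f \pp \Delta g$, and controlling the coherence of the resulting bar-type filtration. This is the true homotopy-theoretic content of Blakers--Massey; the reduction to fibers before it and the closure argument after it are comparatively routine. (In the companion paper~\cite{ABFJ:gbm} the bookkeeping is organized somewhat differently, but any proof must confront this point.)
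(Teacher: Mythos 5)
Your preparatory steps are sound, and it is worth saying so: the cogap map $X \sqcup Y \surj W$ of a pushout is indeed a cover; universality of colimits guarantees that the pulled-back squares are again pushouts; and membership in $\cL$ does descend along covers, because for a modality \emph{both} classes are stable under base change, so the modal factorization of $\gap{f}{g}$ pulls back to the modal factorization of its base change, and base change along a cover is conservative (the same mechanism as in Lemma~\ref{lem:stabintortho}). Note, however, that the present paper does not prove Theorem~\ref{thm:gen-bm} at all --- it imports it from the companion paper --- so the relevant comparison is with the proof of \cite[Thm.~4.0.1]{ABFJ:gbm}, and there your route diverges sharply.

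The genuine gap is the one you flag yourself: the entire second paragraph is asserted, not executed, and it is not ``the true content to be confronted later'' --- it \emph{is} the theorem. Two concrete problems with the bar/James route. First, even in $\cS$ with $X=Y=\term$, the identification of $\Omega\Sigma Z$ with a colimit of iterated joins (the James filtration) requires $Z$ connected; for an arbitrary span in an arbitrary topos there is no off-the-shelf convergent filtration, and the coherence of the relative bar-type construction over a general base is precisely what made classical-style proofs unavailable at this level of generality. It cannot be deferred as bookkeeping. Second, for an arbitrary modality one cannot reason via the ``defect'' or fibers of $\gap{f}{g}$: unlike connectivity, membership in $\cL$ is not detected by a condition on fibers, so one must literally exhibit the (pulled-back) gap map as obtained from $\Delta f \pp \Delta g$ by base change, cobase change, composition and colimits --- the very statement to be proved. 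The proof in \cite{ABFJ:gbm} is designed to avoid exactly this: it is a \emph{finite} descent argument in which the hypothesis enters by recognizing certain fiberwise joins of (base changes of) $\Delta f$ and $\Delta g$ --- hence base changes of $\Delta f \pp \Delta g$, via the mechanism of Example~\ref{example-po}(5)--(6) --- inside the decomposition of the pulled-back square, in the same spirit as the short proof of the dual Theorem~\ref{thm:dual-gbm}; no transfinite filtration appears, so no convergence or coherence issue ever arises. As it stands, your proposal is a plausible plan with its central step open, not a proof.
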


\begin{theorem}[``Dual'' Blakers-Massey {\cite[Thm. 3.6.1]{ABFJ:gbm}}]\label{thm:dual-gbm}
Let Diagram~\eqref{eqn:the-square} be a pullback square.
Suppose that the map $h \pp k \in \cL$. 
Then the square is $\cL$-cocartesian.  
\end{theorem}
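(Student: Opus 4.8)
The plan is to recognise that, for a modality, this dual statement is far softer than its non-dual companion Theorem~\ref{thm:gen-bm}: the cogap map will turn out to be a literal \emph{base change} of the pushout product $h\pp k$, after which the conclusion is immediate from the defining property of a modality, namely that the left class $\cL$ is stable under base change (Definition~\ref{def:modality}). So the whole argument is to set up the correct identifications and then invoke this closure property; no connectivity estimate and no descent argument should be required.

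First I would use the pullback hypothesis to rewrite both sides of the cogap map. Since the square~\eqref{eqn:the-square} is cartesian, its apex is $Z\simeq X\times_W Y$ and the maps $f,g$ are the two projections. Hence the pushout $X\cup_Z Y$ is exactly the pushout of $X\lto X\times_W Y\to Y$, i.e. the fiberwise join $X\join_W Y$ of $h$ and $k$ over $W$ in the sense of Example~\ref{example-po}. Moreover, under this identification the cogap map $\cogap{h}{k}\colon X\cup_Z Y\to W$ is precisely the fiberwise join map $h\join_W k$, since both are the canonical map to $W$ determined by $h$ and $k$ (which agree on $Z$ because the square commutes).

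Next I would invoke the geometric fact recorded in Example~\ref{example-po}: the fiberwise join map fits into the cartesian square
\[
\begin{tikzcd}
X\join_W Y \ar[r]\ar[d, "h\join_W k"'] \pbmark & (X\times W)\sqcup_{X\times Y}(W\times Y) \ar[d, "h\pp k"] \\
W \ar[r, "\Delta"'] & W\times W
\end{tikzcd}
\]
whose bottom map is the diagonal of $W$. In other words, $\cogap{h}{k}=h\join_W k$ is the base change of $h\pp k$ along $\Delta\colon W\to W\times W$; this is where stability of colimits under base change in the topos enters, but that computation has already been carried out in Example~\ref{example-po}, so I may simply cite it. Since $(\cL,\cR)$ is a modality, $\cL$ is stable under base change, and the hypothesis $h\pp k\in\cL$ then forces $\cogap{h}{k}\in\cL$, which is exactly the assertion that the square is $\cL$-cocartesian.

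The only genuinely delicate point is the bookkeeping of the first step—checking that the pushout of the cartesian square and its cogap map really are the fiberwise join $X\join_W Y$ and the fiberwise join map $h\join_W k$—and every one of these identifications rests squarely on the cartesianness of~\eqref{eqn:the-square}. The one piece of real (though standard) topos-theoretic content lives upstream, in the fact cited from Example~\ref{example-po} that the fiberwise join is the base change of the pushout product along the diagonal; once that is granted, the proof is formal. I would emphasise in the write-up that this is in sharp contrast with Theorem~\ref{thm:gen-bm}, where the appearance of the diagonals $\Delta f\pp \Delta g$ and of a genuine colimit-to-limit comparison is what forces the use of the full generalized Blakers--Massey machinery of~\cite{ABFJ:gbm}.
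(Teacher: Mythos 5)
Your proof is correct, and it is essentially the argument of the cited source: this paper only states Theorem~\ref{thm:dual-gbm} with a reference to \cite[Thm. 3.6.1]{ABFJ:gbm}, and the proof there proceeds exactly as you do, identifying the cogap map of the cartesian square with the fiberwise join $h \join_W k$, hence with the base change of $h \pp k$ along the diagonal $W \to W \times W$, and concluding by stability of $\cL$ under base change. Your closing remark is also on target: this is precisely why the dual statement is formal while Theorem~\ref{thm:gen-bm} requires the substantial machinery of \cite{ABFJ:gbm}.
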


\section{The Goodwillie Localization}\label{sec:goodwillie-loc}
We will now revisit the Goodwillie n-excisive localization from the perspective
of topos theory. Our approach here is not the most general possible. In~\cite{BR13} a
reasonably general framework for Goodwillie calculus in the language of model
categories is developed. In~\cite{H15}, the author constructs Goodwillie approx-
imations of arbitrary categories. Here, however, we are particularly interested
in functor categories, and more specifically, those valued in spaces.

\subsection{The $n$-excisive modality}
All of our arguments can be carried out in the presheaf topos
$[\cC, \cS]$ where $\cC$ is a category with finite colimits and a
terminal object, and hence we will work in that level of generality.
We note that the standard examples of finite spaces ($\Fin$) and
finite pointed spaces ($\Fin_*$) fall into this category.  Moreover,
the class of such categories is closed under slicing and taking
pointed objects. It includes in particular the source categories used
by Goodwillie to construct the Goodwillie tower of a functor at a
fixed object.

We stress that the target category is {\it unpointed} spaces because
we rely on topos-theoretic arguments. No pointed category can be a non-trivial
topos. However, our main results~\ref{thm:adjunction-tricks},
\ref{thm:dual-Goodwillie-Blakers-Massey} and
\ref{thm:Goodwillie-Blakers-Massey} are still valid for functors with
values in pointed spaces. This follows from the observation that a
natural transformation of functors to pointed spaces is $n$-excisive
if and only it is still $n$-excisive after forgetting the basepoint,
and analogously for $P_n$-equivalences.

Let us fix in this section a category $\cC$ as above, writing $\term$
and $\ini$ for the terminal and initial objects respectively.  Recall
that the starting point for Goodwillie calculus is the following

\begin{definition}
  A functor $F : \cC \to \cS$ is $n$-excisive if it carries strongly
  cocartesian $(n+1)$-cubes in $\cC$ to cartesian cubes in $\cS$.
\end{definition}

In order to provide examples of $n$-excisive functors, Goodwillie
introduces the following construction. Given a functor $F : \cC \to \cS$,
define a new functor $T_nF$ by the formula:
\[ T_n F(K) := \lim_{U\in P_0(\ul{n+1})} F(K \join U) \] 
There is a natural map 
 $$t_nF : F \to T_n F$$ 
determined at an object $K$
by the cartesian gap map of the cube $U\mapsto F(K\join U)$.

\begin{remark}
  \label{rem:c-join}
  While we do not require that the category $\cC$ admits finite products,
  the above formula nonetheless makes sense in our setting.  Indeed, as
  $\cC$ admits finite coproducts, it admits a tensoring over the category
  of finite sets by setting
  \[ K \otimes U = \coprod_{U} K \]
  Since $\cC$ has a terminal object, we can regard $U$ as an object of
  $\cC$ by considering the object $\term \otimes U$.  One can easily check
  that this makes $K \otimes U$ into a product in $\cC$, so that one
  can define the join using the usual formula.  Equivalently, one may define
  $K \join U$ directly by the colimit:
\[
  K\join U=\colim \left \{
  \begin{tikzcd}
    & K\ar[ld]\ar[rd] \\
    \term &\dots & \term
  \end{tikzcd} \right \}
\]
  with $U$ copies of the terminal object appearing in the diagram.
  When $\cC$ is taken to be $\Fin$ or $\Fin_*$, this definition coincides
  with the standard one.
\end{remark}

With this construction in hand, we now iterate, defining a functor
$P_n F$ as the colimit of the induced sequence
\[ P_nF := \colim \{ F \to T_n F \to T^2_n F \to \cdots \} \] We
summarize the relevant facts about this construction with the
following 

\begin{proposition}[Goodwillie \cite{G03}]
  \label{prop:pn-properties}
  Let $F \in [\cC, \cS]$.
  \begin{enumerate}
  \item $P_nF$ is $n$-excisive.
  \item The functor $P_n : [\cC,\cS] \to [\cC,\cS]$ commutes with
    finite limits.
  \item The canonical map $F \to P_nF$ is universal for maps from $F$
    to $n$-excisive functors.  In particular, the functor $P_n$ is
    idempotent.
  \end{enumerate}
\end{proposition}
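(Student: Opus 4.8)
The plan is to establish the three parts in the order (2), (1), (3): the bookkeeping behind (2) is needed for (1), and (3) becomes formal once (1) is available.

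For (2): observe that for each fixed object $K$ the space $T_nF(K)=\lim_{U\in P_0(\ul{n+1})}F(K\join U)$ is a \emph{finite} limit of values of $F$. Since limits commute with limits and (co)limits in $[\cC,\cS]$ are computed pointwise, $T_n$ preserves all finite limits. The approximation $P_n=\colim_k T_n^k$ is a sequential, hence filtered, colimit of the functors $T_n^k$; as finite limits commute with filtered colimits in $\cS$, the functor $P_n$ again preserves finite limits, which is (2). The same commutation shows that $T_n$ itself preserves filtered colimits — a finite limit tested against a filtered colimit — a fact I reuse below.

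For (1) the crux is the characterization: \emph{$F$ is $n$-excisive if and only if $t_nF\colon F\to T_nF$ is an isomorphism}. Granting it, the conclusion is quick. Because $t_n$ is a natural transformation $\id\Rightarrow T_n$, evaluating it at the colimit gives $t_n(P_nF)=\colim_k t_n(T_n^kF)$, a map $\colim_k T_n^kF\to\colim_k T_n^{k+1}F$, once we use that $T_n$ commutes with the filtered colimit to identify the target with $T_n(P_nF)$. This map is the shift of the defining sequence, and the inclusion of the cofinal subsequence $\colim_{k\ge1}T_n^kF\to\colim_{k\ge0}T_n^kF$ is an isomorphism; hence $t_n(P_nF)$ is an isomorphism, and the characterization applied to $H=P_nF$ yields that $P_nF$ is $n$-excisive. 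It remains to prove the characterization. The direction ($\Rightarrow$) is easy: for each $K$ the $(n+1)$-cube $U\mapsto K\join U$ on $P(\ul{n+1})$ is strongly cocartesian — it is the join of $K$ with the external coproduct of the $n+1$ copies of $\ini\to\term$, and joining with a fixed object preserves strong cocartesianness — and its image under $F$ has cartesian gap map exactly $t_nF(K)$, so $n$-excisiveness forces $t_nF$ to be an isomorphism. The reverse direction ($\Leftarrow$) is the main obstacle: one must show that $F\simeq T_nF$ forces $F$ to send \emph{every} strongly cocartesian $(n+1)$-cube, not merely the universal join cubes, to a cartesian cube. I would prove this by reducing an arbitrary strongly cocartesian $(n+1)$-cube to the universal ones, inducting on $n$ and using that a strongly cocartesian cube is left Kan extended from its faces of dimension $\le1$. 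The delicate point is that $\cC$ carries no connectivity structure, so the usual convergence-by-connectivity arguments are unavailable and the comparison must be purely diagrammatic; this is where essentially all the content of the proposition lies.

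Finally (3). Using (1) I package $P_n$ as an idempotent monad with unit $\eta\colon\id\Rightarrow P_n$. If $G$ is $n$-excisive then $t_nG$ is an isomorphism by the characterization, so the defining sequence of $P_nG$ is stationary and $\eta_G\colon G\to P_nG$ is an isomorphism; applied to $G=P_nF$, which is $n$-excisive by (1), this shows $\eta_{P_nF}$ is an isomorphism. A cofinality argument on the double sequence $T_n^{j+k}F$ (legitimate because $T_n$ commutes with filtered colimits) identifies $P_n(\eta_F)\colon P_nF\to P_nP_nF$ with the diagonal comparison of $\colim_{j,k}T_n^{j+k}F$ and shows it is an isomorphism as well. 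Since both $\eta P_n$ and $P_n\eta$ are isomorphisms, $(P_n,\eta)$ is a localization onto its essential image, which by the previous paragraph is precisely the subcategory of $n$-excisive functors. This delivers both the universal property of $F\to P_nF$ and the idempotence of $P_n$, completing (3).
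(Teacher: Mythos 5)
There is a genuine gap, and it sits exactly where you yourself flag it: the implication ``$t_nF$ is an equivalence $\Rightarrow$ $F$ is $n$-excisive'' is never proved, and the strategy you sketch for it would not work as stated. A strongly cocartesian $(n+1)$-cube $\cX$ is indeed left Kan extended from the subposet of subsets of cardinality $\le 1$, but $F$ preserves neither this Kan extension nor the pushouts relating $\cX$ to the join cubes $U \mapsto K \join U$, so no ``purely diagrammatic'' reduction transports cartesianness of $F$ on the universal cubes to cartesianness of $F(\cX)$ for arbitrary $\cX$. The actual content here is Goodwillie's Lemma 1.9 of \cite{G03} (streamlined in \cite{R08}, treated $\infty$-categorically in \cite{lurie2016higher}, Section 6.1.1): for \emph{every} strongly cocartesian $\cX$, the map of cubes $F(\cX) \to T_nF(\cX)$ factors through a \emph{cartesian} cube. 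Given that lemma your characterization does follow (if $t_nF$ is an equivalence, then $F(\cX)$ is a retract of a cartesian cube, hence cartesian), but the standard proofs do not even proceed that way: they use the factorization directly, showing that in the colimit $P_nF(\cX) = \colim_k T_n^k F(\cX)$ the successive factorizations through cartesian cubes combine with the commutation of finite limits and filtered colimits to make $P_nF(\cX)$ cartesian, with the fixed-point characterization only a corollary. Your proposal inverts this logical order and buries essentially all of the content of the proposition in the one claim left unproven.

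Two further remarks. First, you cannot repair the gap by appealing to this paper's Theorem~\ref{thm:n-excisive-equals-Pn-local}, whose items (4) and (6) assert exactly your characterization when applied to $f = (F \to \term)$: the step $(5) \Rightarrow (6)$ in its proof invokes Proposition~\ref{prop:pn-properties}(1), so the appeal would be circular. Second, be aware that the paper offers no proof of this proposition at all --- it defers to \cite{G03}, \cite{R08} and \cite{lurie2016higher}. Your treatment of (2), the easy direction of the characterization, the shift/cofinality argument identifying $t_n(P_nF)$ with an equivalence, and the idempotent-localization formalities in (3) are all correct and genuinely more detailed than the paper itself; but the single step you defer to ``a purely diagrammatic comparison'' is precisely the step those references exist to supply, so the proposal does not constitute a proof.
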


\begin{proof}
  The proofs appearing in \cite{G03}, as well as Rezk's streamlined
  version \cite{R08} are sufficiently general to go through in our
  setting with only minor modifications.  Indeed, for a translation of
  these arguments into the language of $\infty$-categories, the reader
  may consult \cite{lurie2016higher}[Section 6.1.1].
\end{proof}

Let us write $[\cC, \cS]\exc{n}$ for the full subcategory of
$n$-excisive functors.  The previous proposition can be summarized by
asserting that the functor
\[ P_n : [\cC, \cS] \to [\cC, \cS]\exc{n} \]
is a left exact localization \cite[Prop. 5.2.7.4]{LurieHT}.
In particular, $[\cC, \cS]\exc{n}$ is itself a topos \cite[Prop. 6.1.0.1]{LurieHT}). 

\begin{remark}\label{n-exc-colimits}
  The functor $P_n: [\cC,\cS] \to [\cC,\cS]$ commutes with filtered
  colimits. Since colimits in the localization $[\cC, \cS]\exc{n}$ are
  computed by reflecting the colimits of the ambient topos $[\cC,\cS]$
  down to $[\cC, \cS]\exc{n}$ via $P_n$, the functor $P_n$ viewed as
  taking values in $[\cC,\cS]\exc{n}$ actually commutes with all
  colimits:
\[
   P_n\colim_{i\in I} F_i= P_n\colim_{i\in I}P_n F_i.
\]
The right hand side is the colimit in $[\cC,\cS]\exc{n}$.
\end{remark}

As is the case for any left exact localization, the functor $P_n$
determines two classes of maps via Definition~\ref{defn:lex-classes}.
Moreover, according to Lemma~\ref{lem:lexloc-yields-modality}, these
two classes of maps form a modality.

\begin{definition}\label{def:nexcisivemodality}
We refer to the modality
\begin{center}($P_n$-equivalences, $P_n$-local maps)\end{center}
as the {\it $n$-excisive modality}.  
\end{definition}

Since the Generalized
Blakers-Massey theorem of \cite{ABFJ:gbm} applies to an \emph{arbitrary}
modality on a topos, we may apply the result already at this point, using nothing
but the left-exactness of the functor $P_n$.  The statement
obtained is the following:

\begin{proposition}
  Let
  \[
    \begin{tikzcd}
      F \ar[r, "g"] \ar[d, "f"'] & H \ar[d] \\
      G \ar[r] & K \pomark
    \end{tikzcd}
  \]
  be a pushout square of functors.  Suppose that $\Delta f \pp \Delta g$ is
  a $P_n$-equivalence.  Then so is the cartesian gap map
  \[ \gap{f}{g} : F \to G \times_K H \]
\end{proposition}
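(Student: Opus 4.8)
The plan is to recognize this proposition as a direct instantiation of the generalized Blakers-Massey theorem, Theorem~\ref{thm:gen-bm}, applied to the $n$-excisive modality on the presheaf topos $[\cC, \cS]$. The point is that essentially all of the work has already been carried out: the statement is precisely what Theorem~\ref{thm:gen-bm} yields once the abstract modality $(\cL, \cR)$ is specialized to the concrete one furnished by the $P_n$-equivalences and $P_n$-local maps.

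First I would take the ambient topos to be $[\cC, \cS]$, which is a topos by the standing hypotheses on $\cC$. I would then invoke Definition~\ref{def:nexcisivemodality}, which names the pair ($P_n$-equivalences, $P_n$-local maps) the $n$-excisive modality; that this is genuinely a modality is guaranteed by Lemma~\ref{lem:lexloc-yields-modality} together with Proposition~\ref{prop:pn-properties}(2), which asserts that $P_n$ is a left exact localization. Writing $\cL$ for the class of $P_n$-equivalences, the hypothesis of the present proposition reads exactly $\Delta f \pp \Delta g \in \cL$.

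With this dictionary in place, I would apply Theorem~\ref{thm:gen-bm} verbatim to the given pushout square. Its conclusion is that the square is $\cL$-cartesian, which by definition means the gap map $\gap{f}{g} : F \to G \times_K H$ lies in $\cL$, i.e.\ is a $P_n$-equivalence. This is the desired statement.

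There is no genuine obstacle to overcome; the proposition is included precisely to emphasize that the abstract theorem applies already at this stage, using nothing beyond the left-exactness of $P_n$. The only point demanding attention is the verification that $(\cL, \cR)$ really forms a modality---in particular that the left class of $P_n$-equivalences is stable under base change---but this is exactly what Lemma~\ref{lem:lexloc-yields-modality} supplies, as a consequence of $P_n$ preserving finite limits.
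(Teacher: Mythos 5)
Your proof is correct and matches the paper's own treatment exactly: the proposition is stated there as a direct instantiation of Theorem~\ref{thm:gen-bm} to the $n$-excisive modality of Definition~\ref{def:nexcisivemodality}, with the modality property supplied by Lemma~\ref{lem:lexloc-yields-modality} and the left-exactness of $P_n$ from Proposition~\ref{prop:pn-properties}. Nothing further is needed.
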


We think the reader will agree that the statement in its current form is
not entirely satisfactory: supposing that $f$ is a $P_i$-equivalence
and $g$ is a $P_j$-equivalence, we would like a determination of $n$ in
terms of $i$ and $j$. In the following sections, we will develop
the tools to make such a calculation using the calculus of orthogonality
developed above.  The final result is the following.

\begin{theorem}
  \label{thm:pp-prod-p-equiv}
  Let $f$ be a $P_i$-equivalence and $g$ a $P_j$-equivalence.  Then
  the map
  \[ \Delta f \pp \Delta g \]
  is a $P_{i+j+1}$-equivalence.
\end{theorem}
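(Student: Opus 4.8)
The plan is to reduce the theorem to the statement that the pushout product of a $P_m$-equivalence with a $P_n$-equivalence is a $P_{m+n+1}$-equivalence (this is Theorem~\ref{thm:adjunction-tricks}(4)), and then to prove that statement by means of the fiberwise diagonal. The reduction is immediate: since $P_i$ is left exact (Proposition~\ref{prop:pn-properties}) it preserves the pullback defining the diagonal, so $P_i(\Delta f)=\Delta(P_i f)$; and the diagonal of an isomorphism is an isomorphism. Hence the diagonal of a $P_i$-equivalence is again a $P_i$-equivalence, so if $f$ is a $P_i$-equivalence and $g$ a $P_j$-equivalence then $\Delta f$ and $\Delta g$ are a $P_i$- and a $P_j$-equivalence respectively, and the pushout product statement yields that $\Delta f\pp\Delta g$ is a $P_{i+j+1}$-equivalence.

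Write $\cL_n$ for the class of $P_n$-equivalences and $\cR_n$ for the class of $P_n$-local maps, so that $(\cL_n,\cR_n)$ is the $n$-excisive modality (Definition~\ref{def:nexcisivemodality}). The decisive structural input — and the step I expect to be the main obstacle — is a description of this modality by explicit fiberwise generators, to be proven as Theorem~\ref{thm:n-excisive-equals-Pn-local}: there is a fixed family $S$ of maps of $[\cC,\cS]$ such that, setting $G_n=\{s_1\pp\cdots\pp s_{n+1}:s_1,\dots,s_{n+1}\in S\}$, one has $\cR_n=G_n^{\sperp}$, the class of maps fiberwise right orthogonal to every element of $G_n$. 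The family $S$ consists of images under the Yoneda embedding $y:\cC\op\to[\cC,\cS]$ of maps of $\cC$ (the precise family is part of that theorem). The mechanism of the identification is this: a functor $F$ is $n$-excisive exactly when it carries strongly cocartesian $(n+1)$-cubes to cartesian cubes, and applying $y$ turns colimits into limits and the external coproduct $\boxplus$ into the external product $\boxtimes$, so that the excision condition becomes right orthogonality of $F$ to the cogap map of a $\boxtimes$-cube, which by Example~\ref{example-po}(1) is precisely an iterated pushout product. The delicate part is to promote this orthogonality statement about \emph{objects} into a fiberwise orthogonality statement about \emph{maps}: one base-changes along the representables $\cC(K,-)$, so that a strongly cocartesian cube with nontrivial base vertex $K$ appears as an external-coproduct cube inside the slice $[\cC,\cS]\slice{\cC(K,-)}$, whose cogap map is a pushout product there (Example~\ref{example-po}(7)); ranging over all such base changes is exactly what upgrades $\perp$ to $\sperp$ and disposes of the nontrivial base, giving $\cR_n=G_n^{\sperp}$.

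Granting this description, the pushout product statement becomes a formal manipulation with the fiberwise diagonal. For any modality one has $\cL_k={}^{\sperp}\cR_k$: the inclusion $\cL_k\subseteq{}^{\sperp}\cR_k$ is the defining orthogonality $\cL_k\sperp\cR_k$, and the reverse inclusion follows from the factorization-system identity $\cL_k={}^{\perp}\cR_k$ together with the implication $\sperp\Rightarrow\perp$. Thus, to prove $a\pp b\in\cL_{m+n+1}$ for $a\in\cL_m$ and $b\in\cL_n$, it suffices to check $(a\pp b)\sperp h$ for every $h\in\cR_{m+n+1}$. By Proposition~\ref{prop:fiberwisediagonalorth} and the associativity formula $\fwd{a\pp b}{h}=\fwd{a}{\fwd{b}{h}}$ of Proposition~\ref{prop:the-strange-adjunction}, this is equivalent to $a\sperp\fwd{b}{h}$, which holds as soon as $\fwd{b}{h}\in\cR_m=G_m^{\sperp}$. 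Unwinding the membership $\fwd{b}{h}\in G_m^{\sperp}$ with the strange adjunction once more — using $w\sperp\fwd{b}{h}\Leftrightarrow(w\pp b)\sperp h$ for $w\in G_m$, and then $b\in\cL_n={}^{\sperp}\cR_n$ with $\cR_n=G_n^{\sperp}$ — reduces the whole problem to the single requirement that $(w\pp w')\sperp h$ for all $w\in G_m$ and $w'\in G_n$. Here the bookkeeping closes: $w\pp w'$ is a pushout product of $(m+1)+(n+1)=(m+n+1)+1$ elements of $S$, hence lies in $G_{m+n+1}$, so it is fiberwise left orthogonal to every $h\in\cR_{m+n+1}=G_{m+n+1}^{\sperp}$. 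This proves the pushout product statement, and with it the theorem.
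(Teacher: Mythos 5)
Your proposal is correct and takes essentially the same route as the paper: the paper likewise reduces via left exactness of $P_n$ (so that $\Delta$ sends $P_n$-equivalences to $P_n$-equivalences) and then deduces the statement from the pushout-product theorem, which it proves from Theorem~\ref{thm:n-excisive-equals-Pn-local}, Proposition~\ref{prop:fiberwisediagonalorth}, and the adjunction formula of Proposition~\ref{prop:the-strange-adjunction}, exactly the ingredients you use. Your single-pass unwinding with the mixed generators $G_n$ (characterization (1) of Theorem~\ref{thm:n-excisive-equals-Pn-local}) is just a repackaging of the paper's staged argument through parts (1)--(3) of Theorem~\ref{thm:adjunction-tricks} with the generators $w^K_n$ (characterization (2)).
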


\subsection{Cubical Diagrams and Orthogonality}
\label{sec:cubic-diagr-orthg}

In order to prove Theorem \ref{thm:pp-prod-p-equiv}, we are going
to examine how the notion of cubical diagram in $\cC$ is transformed
by the Yoneda embedding
\[ y : \cC^{\op} \to [\cC, \cS]. \] 
We will see that there is a close
connection between strongly cocartesian cubical diagrams in $\cC$ and
fiberwise join products in $[\cC, \cS]$, leading to a number of useful
characterizations of the classes of $P_n$-equivalences and $P_n$-local
maps.  From here, the calculus of orthogonality, and in particular
the adjunction formula of Proposition~\ref{prop:the-strange-adjunction}
ultimately lead to the desired result.  In the discussion which
follows, we write 
\[
  R^K = \cC(K,-) = y(K) 
\]
for the representable functor
determined by an object $K \in \cC$. For a map $k:K\to L$ we write
\[
  R^k = y(k): \cC(L,-)\to \cC(K,-) 
\]
for the induced map.
We recall for later use
that the Yoneda embedding preserves all limits and hence
sends colimits in $\cC$ to limits in $[\cC, \cS]$.

Now let $\cX$ be a cubical diagram in $\cC$ and let us put
$K = \cX(\emptyset)$.  We denote by $\hat{\cX}$ the cubical diagram
obtained by composition with the (contravariant) Yoneda embedding.  That is,
$\hat{\cX} = y \circ \cX$.  The cocartesian gap map of this
cube takes the form
\[ \cogapcube{\cX} : \colim_{U \neq \emptyset} R^{\cX(U)} \to R^K \]
The interest in this map arises from the following elementary
observation:

\begin{lemma}
  \label{lem:cart-ortho}
  Let $F \in [\cC, \cS]$ be a functor.  Then 
  \[ \cogapcube{\cX} \perp (F\to\term) \iff \text{$F \circ \cX$ is cartesian} \]
\end{lemma}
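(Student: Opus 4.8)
The plan is to unfold both the orthogonality condition and the cartesianness condition and recognize them as two descriptions of one and the same gap map. The only genuinely mathematical inputs are the Yoneda lemma and the fact that the contravariant hom turns colimits into limits; everything else is bookkeeping.

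First I would unfold external orthogonality against a map whose codomain is terminal. For any map $u : P \to Q$, the relation $u \perp (F \to \term)$ means that $\ph{u}{F \to \term}$ is an isomorphism in $\cS$. Since the target of $F \to \term$ is terminal, the defining square of $\ph{u}{F \to \term}$ has $\map{Q}{\term} = \map{P}{\term} = \term$ in its right-hand column, so the fibre product collapses and $\ph{u}{F \to \term}$ is identified with the restriction map $u^* : \map{Q}{F} \to \map{P}{F}$. Thus $u \perp (F \to \term)$ holds precisely when $u^*$ is an isomorphism of spaces. Applying this to $u = \cogapcube{\cX} : \colim_{U \neq \emptyset} R^{\cX(U)} \to R^K$, the condition $\cogapcube{\cX} \perp (F \to \term)$ becomes the assertion that the precomposition map
\[
  \map{R^K}{F} \to \map{\colim_{U \neq \emptyset} R^{\cX(U)}}{F}
\]
is an isomorphism.

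Next I would identify the two sides. By the Yoneda lemma, $\map{R^K}{F} = F(K) = F(\cX(\emptyset))$. Since $\map{-}{F}$ sends colimits to limits, and the Yoneda isomorphism is natural in the representing object,
\[
  \map{\colim_{U \neq \emptyset} R^{\cX(U)}}{F}
  = \lim_{U \neq \emptyset} \map{R^{\cX(U)}}{F}
  = \lim_{U \in P_0(\ul{n})} F(\cX(U)),
\]
naturally in $U$. Under these identifications the displayed precomposition map becomes exactly the canonical map $F(\cX(\emptyset)) \to \lim_{U \in P_0(\ul{n})} F(\cX(U))$, which is by definition the cartesian gap map of the cube $F \circ \cX$. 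Consequently $\cogapcube{\cX} \perp (F \to \term)$ holds if and only if this gap map is an isomorphism, i.e. if and only if $F \circ \cX$ is cartesian, which is the desired equivalence.

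The main obstacle — more a matter of care than of difficulty — is verifying that the map induced by precomposition with $\cogapcube{\cX}$ genuinely coincides with the gap map of $F \circ \cX$, and not merely that the two maps have isomorphic source and target. This is where naturality of the Yoneda isomorphism $\map{R^{\cX(U)}}{F} \simeq F(\cX(U))$ in $U$ is essential: it matches the limit cone computing $\lim_{U} F(\cX(U))$ with the cocone assembling $\colim_{U} R^{\cX(U)}$, so that the universal map out of the colimit is carried to the universal map into the limit. Once this identification is pinned down, the equivalence follows immediately.
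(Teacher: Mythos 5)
Your proof is correct and follows essentially the same route as the paper's: unfold the pullback hom against the terminal map (so it collapses to the precomposition map), then use Yoneda and the fact that $\map{-}{F}$ turns colimits into limits to identify it with the gap map of $F \circ \cX$. Your extra attention to the naturality of the Yoneda identification is a welcome refinement of a step the paper leaves implicit.
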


\begin{proof}
  Unfolding the definition of the pullback hom $\ph{\cogapcube{\cX}}{F\to\term}$
  (and ignoring the trivial factors) we find 
  \[ \ph{\cogapcube{\cX}}{F\to\term}: [R^{\cX(\emptyset)}, F] \to [ \colim_U\, R^{\cX(U)}, F ]\]
  But of course $[R^{\cX(\emptyset)}, F] = F(\cX(\emptyset))$ and
  \begin{eqnarray*}
    [ \colim_U\, R^{\cX(U)} , F ]
    &=& \holim_U\, [ R^{\cX(U)}, F ] \\
    &=& \holim_U F(\cX(U))    
  \end{eqnarray*}
  by Yoneda.  Hence this is the map
  \[ \ph{\cogapcube{\cX}}{F\to\term}: F(\cX(\emptyset)) \to \holim_U F(\cX(U)) \]
  which is an isomorphism exactly if the cube $F \circ \cX$ is cartesian.
\end{proof}

\begin{cor}
  \label{cor:n-excisive-ortho}
  A functor $F \in [\cC, \cS]$ is $n$-excisive if and only if, for every
  strongly cocartesian $(n+1)$ cube $\cX$, we have $\cogapcube{\cX} \perp F\to\term $.
\end{cor}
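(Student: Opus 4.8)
The plan is to combine the definition of an $n$-excisive functor directly with the translation just established in Lemma~\ref{lem:cart-ortho}. First I would recall that, by definition, $F$ is $n$-excisive precisely when, for every strongly cocartesian $(n+1)$-cube $\cX$ in $\cC$, the composite cube $F\circ\cX$ is cartesian in $\cS$. Since this is a condition that must be checked for each such cube $\cX$ separately, it suffices to reformulate the cartesianness of $F\circ\cX$ for a single fixed cube and then quantify.

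This reformulation is exactly the content of Lemma~\ref{lem:cart-ortho}, which asserts that, with $K=\cX(\emptyset)$, the cube $F\circ\cX$ is cartesian if and only if $\cogapcube{\cX}\perp(F\to\term)$. Applying this equivalence uniformly over all strongly cocartesian $(n+1)$-cubes $\cX$ produces the chain of equivalences: $F$ is $n$-excisive $\iff$ $F\circ\cX$ is cartesian for every such $\cX$ $\iff$ $\cogapcube{\cX}\perp(F\to\term)$ for every such $\cX$, which is precisely the asserted statement. The one bookkeeping point worth flagging is that the object $K$ in the lemma is instantiated as $\cX(\emptyset)$, so that the gap map $\cogapcube{\cX}$ is formed from the very cube under consideration; no compatibility or convergence hypothesis beyond this needs to be verified.

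Accordingly, I do not expect any genuine obstacle in this corollary: all of the substance resides in Lemma~\ref{lem:cart-ortho}, and the present statement merely packages that lemma together with the definition of excision. The proof is therefore a one-line deduction, and the task is simply to record the equivalence cleanly rather than to overcome any difficulty.
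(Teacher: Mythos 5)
Your proof is correct and is exactly the deduction the paper intends: the corollary is stated without proof as an immediate consequence of Lemma~\ref{lem:cart-ortho} combined with the definition of $n$-excisiveness, quantified over all strongly cocartesian $(n+1)$-cubes. Your bookkeeping remark that $K=\cX(\emptyset)$ is instantiated per cube is accurate and there is nothing more to add.
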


In view of the previous corollary, it is natural to extend the
definition of $n$-excisiveness to maps so that a functor is
$n$-excisive if and only if the map $F \to \term$ is so. Concretely,
we have

\begin{definition}
  \label{defn:n-excisive-maps}
  A map $f : F \to G$ of functors is said to be {\it $n$-excisive} if
  for all strongly cocartesian $(n+1)$-cubes $\cX$ we have $\cogapcube{\cX} \perp f$.
\end{definition}

For convenience we note that $f$ is $n$-excisive if and only if for all strongly cocartesian $\cX$ as above, the square
  \[
    \begin{tikzcd}
      F(\cX(\emptyset)) \ar[r] \ar[d] & \lim_{U \neq \emptyset} F(\cX(U)) \ar[d] \\
      G(\cX(\emptyset)) \ar[r] & \lim_{U \neq \emptyset} G(\cX(U)) 
    \end{tikzcd}
  \]
is a pullback.

The following construction is a useful source of strongly cocartesian
diagrams.  The reader may wish to compare \cite[Example
2.8]{bauer2015cross} where a similar construction is considered.

\begin{construction}
  \label{cons:scc}
  Let $\{ k_i : K_i \to L_i \}_{i=1}^{n}$ be a family of maps in $\cC$.
  For $U \subseteq \ul{n}$, define
  \[
    \sigma_U(k_i) = \begin{cases}K_i & i \notin U \\ L_i & i \in U\end{cases}
  \]
  Associated to the family $\{k_i\}$ is a $n$-cubical diagram $\cK$
  defined by the formula
  \[ \cK(U) = \bigsqcup_{1 \leq i \leq n} \sigma_U(k_i) \]
  where for $U \subseteq V$, the induced map $\cK(U) \to \cK(V)$ is given by
  \[ \cK(U \hookrightarrow V) = \begin{cases} k_i & i \in V \setminus U \\ \id_{\sigma_V(i)} & \text{otherwise} \end{cases} \]
\end{construction}

\begin{lemma}
  \label{lem:gen-cross-effect-cube-scc}
  For any family of maps $\{ k_i : K_i \to L_i \}_{i = 1}^{n}$
  the cubical diagram $\cK$ is strongly cocartesian.
\end{lemma}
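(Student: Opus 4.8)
The plan is to recognize the cube $\cK$ as an iterated external coproduct of the one-cubes determined by the $k_i$, and then invoke the fact, recorded after Definition~\ref{def:exterior-product-coproduct}, that such iterated external coproducts are strongly cocartesian. Concretely, I claim that
\[
\cK = k_1 \boxplus \cdots \boxplus k_n,
\]
where each $k_i : K_i \to L_i$ is regarded as a $1$-cube $P(\ul{1}) \to \cC$.

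To verify this identification I would first match objects: under $P(\ul{n}) = P(\ul{1})^{n}$, a subset $U \subseteq \ul{n}$ corresponds to its tuple of indicator values, and by definition the external coproduct assigns to it $\bigsqcup_{1 \leq i \leq n} \sigma_U(k_i)$, which is exactly $\cK(U)$ from Construction~\ref{cons:scc}. Next I would match the structure maps: for $U \subseteq V$, the map induced by the external coproduct acts on the $i$-th summand by $\sigma_U(k_i) \to \sigma_V(k_i)$, which is $k_i$ precisely when $i \notin U$ and $i \in V$, i.e.\ $i \in V \setminus U$, and the identity otherwise --- matching the recipe for $\cK(U \hookrightarrow V)$ verbatim. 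This bookkeeping is the only place where care is required, but it is routine.

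With the identification in hand, the conclusion is immediate: a single map, viewed as a $1$-cube, has no $2$-dimensional subcubes and is therefore vacuously strongly cocartesian, and the external coproduct of strongly cocartesian cubes is again strongly cocartesian; iterating gives that $k_1 \boxplus \cdots \boxplus k_n$, and hence $\cK$, is strongly cocartesian.

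Should one prefer a self-contained argument avoiding the iterated-coproduct formalism, I would instead check directly that every $2$-face of $\cK$ is a pushout. Such a face is indexed by a base $U$ together with two distinct directions $j, l \notin U$, and across its four vertices only the $j$-th and $l$-th summands vary; writing $C = \bigsqcup_{i \neq j,l} \sigma_U(k_i)$ for the common summand, the face is the external coproduct of the constant cube on $C$ with the $1$-cubes $k_j$ and $k_l$. Since coproducts commute with pushouts, it suffices to observe that the span in the $j$-direction has the form $L_j \xleftarrow{k_j} K_j \xrightarrow{\id} K_j$ and likewise in the $l$-direction, so that pushing out along the identity legs returns $C \sqcup L_j \sqcup L_l = \cK(U \cup \{j,l\})$. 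In either approach there is essentially no genuine obstacle; the only subtlety, and the thing I would take care to get right, is the coordinate bookkeeping establishing that $\cK$ really \emph{is} the external coproduct rather than merely agreeing with it on objects.
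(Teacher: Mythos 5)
Your proof is correct and takes the same approach as the paper, whose entire proof is the one-line identification $\cK = k_1 \boxplus \cdots \boxplus k_n$ combined with the fact recorded after Definition~\ref{def:exterior-product-coproduct} that such iterated external coproducts are strongly cocartesian. Your careful verification of the object- and map-level bookkeeping (and the optional direct check of $2$-faces) only spells out what the paper leaves implicit.
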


\begin{proof} 
  In the notation of Definition 2.2.1, we have $\cK=k_1\boxplus \cdots \boxplus k_n$. 
 \end{proof}

\begin{lemma}
  \label{lem:gen-cross-effect-cube-is-box}
  \[ \cogapcube{\cK} = R^{k_1} \pp \cdots \pp R^{k_n} \]
\end{lemma}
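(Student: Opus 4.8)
The plan is to recognize $\cogapcube{\cK}$ as an instance of Example~\ref{example-po}(1), which identifies the iterated pushout product $R^{k_1}\pp\cdots\pp R^{k_n}$ with the cogap map of the external cartesian product $R^{k_1}\boxtimes\cdots\boxtimes R^{k_n}$. Thus the entire content is to show that, as a cube in $[\cC,\cS]$, the Yoneda image $\hat{\cK}=y\circ\cK$ agrees (after the reindexing implicit in the formation of $\cogapcube{\cK}$) with $R^{k_1}\boxtimes\cdots\boxtimes R^{k_n}$. I would start from Lemma~\ref{lem:gen-cross-effect-cube-scc}, which already records $\cK=k_1\boxplus\cdots\boxplus k_n$, so that $\cK(U)=\bigsqcup_i\sigma_U(k_i)$ with $\sigma_U(k_i)=K_i$ for $i\notin U$ and $L_i$ for $i\in U$.

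The key computation uses two features of the contravariant Yoneda embedding $y:\cC\op\to[\cC,\cS]$: it is limit-preserving, hence it carries the coproducts in $\cC$ to products in $[\cC,\cS]$, giving $R^{\cK(U)}=\prod_i R^{\sigma_U(k_i)}$; and it reverses the direction of every edge, so the $1$-cube $R^{k_i}$ runs from $R^{L_i}$ to $R^{K_i}$. Because the target of $\cogapcube{\cK}$ is the vertex $R^{\cK(\emptyset)}$ rather than $R^{\cK(\ul{n})}$, the cube entering the cogap map is the reindexing $\tilde{\cK}(U)=R^{\cK(\ul n\setminus U)}$ of $\hat{\cK}$ along the complementation isomorphism $P(\ul n)\op\cong P(\ul n)$. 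Unwinding the definitions, $\tilde{\cK}(U)=\prod_i R^{\tau_U(i)}$ with $\tau_U(i)=L_i$ for $i\notin U$ and $K_i$ for $i\in U$, and the induced edge maps are exactly the $R^{k_i}$ in the respective coordinate directions. This is precisely the value-and-edge description of the external cartesian product, whence $\tilde{\cK}=R^{k_1}\boxtimes\cdots\boxtimes R^{k_n}$.

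With this identification in hand I would conclude by applying Example~\ref{example-po}(1): the cogap map of $R^{k_1}\boxtimes\cdots\boxtimes R^{k_n}$ is by definition $R^{k_1}\pp\cdots\pp R^{k_n}$, and this cogap map is exactly $\cogapcube{\cK}$. As a sanity check on the orientation conventions, in the base case $n=1$ both sides reduce directly to $R^{k_1}$. The hard part is not any estimate but the bookkeeping of variance: one must keep straight that the contravariant embedding $y$ both turns $\boxplus$ into $\boxtimes$ and reverses the cube's orientation, and verify that the complementation reindexing built into the definition of $\cogapcube{-}$ exactly cancels this reversal, so that source and target land on the correct sides. Once the indexing is fixed, everything is a formal consequence of $y$ preserving limits.
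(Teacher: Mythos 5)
Your proposal is correct and follows essentially the same route as the paper: identify $\hat{\cK}$ with $R^{k_1}\boxtimes\cdots\boxtimes R^{k_n}$ using that the Yoneda embedding takes coproducts to products, then invoke Example~\ref{example-po}(1) to recognize the cogap map of that cube as $R^{k_1}\pp\cdots\pp R^{k_n}$. The only difference is that you spell out the complementation reindexing $P(\ul{n})\op\cong P(\ul{n})$ and the variance bookkeeping that the paper's two-line proof leaves implicit; this is a welcome clarification, not a different argument.
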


\begin{proof}
We have $\hat{\cK}:= y \circ \hat{\cK}=
 R^{k_1}\boxtimes \cdots \boxtimes R^{k_n}$,
 since the Yoneda functor takes coproduct to product.
 Hence the cocartesian gap map of $\hat{\cK}$
 is equal to $R^{k_1} \pp \cdots \pp R^{k_n}$.
\end{proof}

\begin{example}
  \label{exmp:cross-effect}
  Suppose the category $\cC$ is \emph{pointed}, that is, that the
  initial and terminal objects coincide in $\cC$.  It will be
  convenient in this case to write $\vee$ for the coproduct in $\cC$
  in order to make contact with the traditional notation.  In
  particular, we have $K \vee \term = K$ for all objects
  $K \in \cC$.

  Now consider a family of objects $\{K_i\}_{i = 1}^{n}$ in $\cC$.
  Applying Construction~\ref{cons:scc} to the collection of maps
  $\{ K_i \to \term \}_{i = 1}^{n}$ we find that the resulting cube may
  be more simply described as 
  \[ \cK(U) = \bigvee_{i \notin U} K_i \]
  Now let $F : \cC \to \cS$ be a functor.  Unraveling the definition
  shows that the pullback hom $\ph{\cogapcube{\cK}}{F}$ is the map
  \[ F(\bigvee_{i} K_i) \to \lim_{U \neq \emptyset} F(\bigvee_{i\notin U} K_i) \]
  The fiber of this map is what Goodwillie refers to as the
  \emph{$n$-th cross-effect}, writing $(cr_n F)(K_1, \dots, K_n)$.  It
  follows immediately from these considerations that we have
  $\cogapcube{\cK} \perp (F\to\term)$ for every family $\{K_i\}_{i = 1}^{n}$ of
  objects of $\cC$ if and only if $F$ is \emph{of degree $(n-1)$} in
  the sense of \cite[Definition 3.21]{bauer2015cross}; that is, $cr_n F$ vanishes.

  It is well known that to be of degree $n$ is strictly weaker than
  to be $n$-excisive.  Nonetheless, we will show below that we can
  recover the notion of $n$-excisiveness from the cubical diagrams
  $\cK$ by replacing the external orthogonality relation $\perp$ by
  the stronger fiberwise orthogonality relation $\sperp$.  It is
  exactly this observation which motivates the introduction of this
  stronger notion.
\end{example}

Construction \ref{cons:scc} turns out to be quite general: in fact, as
we now show, \emph{every} cubical diagram can be obtained from it
after a single cobase change.  To make this precise, suppose we are
given a strongly cocartesian cube $\cX : P(\ul{n}) \to \cC$.  Let us
put $K = P(\emptyset)$ and $K_i = P(\{i\})$.  The functorial action of
$\cX$ gives us maps
\[ k_i := \cX(\emptyset \to \{i\}) : K \to K_i . \]
Applying Construction \ref{cons:scc}, we obtain a new cubical diagram
which, in this case, we will denote by $\cX_\pp$ (the notation being
inspired by Lemma \ref{lem:gen-cross-effect-cube-is-box} above).
Unwinding the definition, we find that
\[ \cX_{\pp}(\emptyset) = K^{\sqcup n} \] so that the codiagonal
$\nabla : K^{\sqcup n} \to K$ provides a canonical map
$\cX_{\pp}(\emptyset) \to \cX(\emptyset)$.

\begin{lemma}
  \label{lem:scc-cobase}
  The strongly cocartesian cube $\cX$ is obtained from $\cX_{\pp}$
  by cobase change along the codiagonal map
  \[ \nabla : K^{\sqcup n} \to K \]
\end{lemma}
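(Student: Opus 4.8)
The plan is to prove Lemma~\ref{lem:scc-cobase} by exhibiting $\cX$ as a pushout of $\cX_\pp$ along $\nabla$ in the functor category of cubes, that is, by producing a cobase-change square of $n$-cubes
\[
\begin{tikzcd}
\cX_\pp \ar[r] \ar[d] & \cX \ar[d] \\
c_{K^{\sqcup n}} \ar[r] & c_K
\end{tikzcd}
\]
and verifying it is levelwise (equivalently objectwise in $P(\ul n)$) cocartesian, where $c_{K^{\sqcup n}}$ and $c_K$ denote the constant cubes. Here the left vertical map is the natural transformation $\cX_\pp \to c_{K^{\sqcup n}}$ induced at each $U$ by the folding of the summands of $\cX_\pp(U) = \bigsqcup_i \sigma_U(k_i)$ onto $K^{\sqcup n}$, and the right vertical map $\cX \to c_K$ is the canonical transformation given at $U$ by the structure map $\cX(\emptyset \to U^c\text{-data})$; the bottom map is the codiagonal $\nabla$. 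The essential observation is that since colimits of diagrams valued in $\cC$ (here pushouts along a fixed map of cubes) are computed objectwise, it suffices to check that for each $U \in P(\ul n)$ the square of objects in $\cC$ obtained by evaluating at $U$ is a pushout.

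First I would compute the two evaluations. At level $U$ we have $\cX_\pp(U) = \bigsqcup_{1\le i \le n}\sigma_U(k_i)$ where $\sigma_U(k_i) = K$ for $i\notin U$ and $=K_i$ for $i\in U$, while the target $\cX(U)$ is determined by strong cocartesianness: since $\cX$ is strongly cocartesian with $\cX(\emptyset)=K$ and $\cX(\{i\})=K_i$, the value $\cX(U)$ is the iterated pushout $\colim_{i\in U}\bigl(K_i \leftarrow K \rightarrow \cdots\bigr)$, i.e. the wide pushout of the maps $\{k_i : K \to K_i\}_{i\in U}$ over the common source $K$. Concretely, strong cocartesianness says exactly that each $2$-face is a pushout, and by the standard reduction this forces $\cX(U) = \operatorname{colim}\bigl(\bigsqcup_{i\in U} K \;\rightrightarrows\; \ast\bigr)$ — the pushout of the $k_i$ ($i\in U$) amalgamated along $K$. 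I would then match this against the pushout of the evaluated square: the objectwise pushout of $\bigsqcup_{i} \sigma_U(k_i)$ along $\nabla : K^{\sqcup n}\to K$ collapses the $n$ copies $\bigsqcup_i K \to K$ (one per index) to a single $K$, leaving precisely the amalgamated coproduct of the $L$-valued summands over $K$, which is exactly $\cX(U)$.

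The key step, and the one requiring care rather than routine bookkeeping, is verifying that the maps in the square are the correct ones and that the identification of pushouts is natural in $U$ — i.e. compatible with the cube structure maps on both sides. The cleanest way to see the objectwise pushout claim is to use that $\cX_\pp = k_1 \boxplus \cdots \boxplus k_n$ (Lemma~\ref{lem:gen-cross-effect-cube-scc}) where each $k_i$ here is viewed as the $1$-cube $K\to K_i$, and that $\cX$ is the external-coproduct cube built from the same maps but amalgamated over $K$ rather than taken disjointly; the codiagonal $\nabla$ is precisely the comparison that performs this amalgamation. Since $\boxplus$ of $1$-cubes is strongly cocartesian and the terminal vertex is read off as $\bigsqcup_i L_i$ versus the pushout $\operatorname{colim}_i(L_i \leftarrow K \to \cdots)$, I expect the whole verification to reduce to the single statement that cobase change along $\nabla$ turns a disjoint union of copies of $K$ into a single $K$, which holds objectwise because $\cC$ has finite coproducts and the relevant colimits are computed pointwise.

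The main obstacle I anticipate is purely organizational: making the two cube structures line up strictly enough that the claimed square is genuinely a map of cubes (not merely a levelwise coincidence), and confirming that strong cocartesianness of $\cX$ is both used and preserved correctly under the identification. Once the square is seen to be a map of $n$-cubes that is objectwise cocartesian, the conclusion that $\cX$ is the cobase change of $\cX_\pp$ along $\nabla$ follows formally from the fact that colimits (here pushouts of cubes) are computed objectwise.
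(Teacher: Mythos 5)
Your proposal follows essentially the same route as the paper: reduce to the claim that for each $U \subseteq \ul{n}$ the evaluated square in $\cC$ is a pushout (colimits of cube-shaped diagrams being computed objectwise), identify $\cX(U)$ via strong cocartesianness as the wide pushout of $\{k_i : K \to K_i\}_{i \in U}$, and observe that pushing out $\cX_{\pp}(U) = \bigsqcup_i \sigma_U(k_i)$ along $\nabla$ amalgamates the copies of $K$ into one --- which is exactly the paper's ``simple cofinality argument'' made explicit. One slip, however: your cobase-change square is oriented backwards. The natural transformations you describe, $\cX_{\pp} \to c_{K^{\sqcup n}}$ (``folding the summands onto $K^{\sqcup n}$'') and $\cX \to c_K$, do not exist: there are no maps $K_i \to K$ to fold along, and the structure maps of $\cX$ go \emph{out of} $K = \cX(\emptyset)$, not into it. The correct square has the constant cubes in the initial corner: the map $c_{K^{\sqcup n}} \to \cX_{\pp}$ is given at level $U$ on the $i$-th summand by $\id_K$ for $i \notin U$ and by $k_i$ for $i \in U$; the map $c_{K^{\sqcup n}} \to c_K$ is $\nabla$; and the lemma asserts $\cX \simeq c_K \sqcup_{c_{K^{\sqcup n}}} \cX_{\pp}$, so that $\cX$, not $c_K$, occupies the pushout corner. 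Since the computation in your second paragraph already uses the correct directions (pushout of $\cX_{\pp}(U)$ along $\nabla : K^{\sqcup n} \to K$, collapsing the $n$ copies of $K$ and leaving the amalgamated coproduct of the $K_i$ over $K$), this is a fixable mislabeling rather than a conceptual gap; with the square reoriented, your argument coincides with the paper's proof.
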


\begin{proof}
  The lemma asserts that for any $U \subseteq \ul{n}$, the square
  \[
    \begin{tikzcd}
      K \sqcup \cdots \sqcup K \ar[r] \ar[d] & K \ar[d] \\
      \bigsqcup_{1 \leq i \leq n} \sigma_U(i) \ar[r] & \cX(U)
    \end{tikzcd}
  \]

  \noindent is a pushout.  But since $\cX$ is strongly cocartesian,
  we have that

  \[ \cX(U) = \colim \left \{ \begin{tikzcd} & K_{i_1} \ar[d, phantom,
        "\vdots"] \\ K \ar[ur] \ar[dr] \ar[r] & K_{i_2} \ar[d, phantom,
        "\vdots"] \\ & K_{i_3} \end{tikzcd} \right \}_{i_k \in U} \]

  \noindent and one easily sees that this coincides with the pushout
  above by a simple cofinality argument.
\end{proof}

It is immediate from the previous lemma and the fact that the Yoneda
embedding sends colimits in $\cC$ to limits in $[\cC, \cS]$ that the
corresponding cube of representable functors $\hat{\cX}$ is obtained from
$\hat{\cX_{\pp}}$ by base change along the $n$-fold diagonal map
\[ \Delta : R^{\cX(\emptyset)} \to (R^{\cX(\emptyset)})^{\times n}
  = R^{\cX_{\pp}(\emptyset)} \] 
It will be convenient in what 
follows to introduce special notation for the cocartesian gap maps of
these two cubes.  We will use this notation exclusively in the case
where the given cubical diagram $\cX$ is known to be strongly
cocartesian. In this case, the cocartesian gap map of the cube $\hat{\cX}$
will be denoted
\[ \gamma^{\cX} : \Gamma^{\cX} \to R^{\cX(\emptyset)} \] 
where $\Gamma^{\cX} := \colim_{U \neq \emptyset} R^{\cX(U)}$. For $\hat{\cX_{\pp}}$, on the other hand, we will write
\[ w^{\cX} : W^{\cX} \to (R^{\cX(\emptyset)})^{\times n} \] 
with $W^{\cX}$ defined by the analogous colimit for the cube
$\cX_{\pp}$. Some justification for this special notation will be
given in Remark \ref{rem:ganea} below.  For now we observe

\begin{lemma}
  \label{lem:cocart-pb}
  For any strongly cocartesian cube $\cX$ in $\cC$, the square
  \[
    \begin{tikzcd}
      \Gamma^{\cX} \ar[d, "\gamma^{\cX}"'] \ar[r] \pbmark & W^{\cX} \ar[d, "w^{\cX}"] \\
      R^{\cX(\emptyset)} \ar[r, "\Delta"'] & (R^{\cX(\emptyset)})^{\times n}
    \end{tikzcd}
  \]
is a pullback in $[\cC, \cS]$.
\end{lemma}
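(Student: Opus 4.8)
The plan is to obtain the lemma as a direct consequence of the base-change description of $\hat{\cX}$ recorded immediately above, combined with the universality of colimits in the topos $[\cC,\cS]$. Throughout I write $K = \cX(\emptyset)$.

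First I would make the structural input precise. By Lemma~\ref{lem:scc-cobase} the cube $\cX$ is the cobase change of $\cX_{\pp}$ along the codiagonal $\nabla : K^{\sqcup n} \to K$. Applying the contravariant Yoneda embedding, which turns pushouts in $\cC$ into pullbacks in $[\cC,\cS]$ and sends $\nabla$ to the diagonal $\Delta : R^{K} \to (R^{K})^{\times n} = R^{\cX_{\pp}(\emptyset)}$, we obtain for every $U \in P(\ul{n})$ a cartesian square
\[
\begin{tikzcd}
  R^{\cX(U)} \ar[r] \ar[d] \pbmark & R^{\cX_{\pp}(U)} \ar[d] \\
  R^{K} \ar[r, "\Delta"'] & (R^{K})^{\times n}
\end{tikzcd}
\]
naturally in $U$. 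Writing $\Delta^{*}$ for base change along $\Delta$, this identifies the whole cube $\hat{\cX}$ with $\Delta^{*}\hat{\cX_{\pp}}$; restricting to the nonempty subsets exhibits the punctured diagram $U \mapsto R^{\cX(U)}$ as $\Delta^{*}$ applied to the punctured diagram $U \mapsto R^{\cX_{\pp}(U)}$.

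Next I would pass to the colimit over $U \neq \emptyset$. Since in a topos colimits are preserved by base change, $\Delta^{*}$ commutes with this colimit, giving
\[
\Gamma^{\cX} = \colim_{U \neq \emptyset} R^{\cX(U)} = \colim_{U \neq \emptyset} \Delta^{*} R^{\cX_{\pp}(U)} = \Delta^{*}\colim_{U \neq \emptyset} R^{\cX_{\pp}(U)} = \Delta^{*} W^{\cX}.
\]
By definition $\Delta^{*} W^{\cX} = R^{K}\times_{(R^{K})^{\times n}} W^{\cX}$, and under this identification the induced structure map to $R^{K}$ is the base change of $w^{\cX}$, which is exactly the cocartesian gap map $\gamma^{\cX}$. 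This is precisely the assertion of the lemma.

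The step I expect to require the most care is the middle equality in the display: one must know not merely that each vertex $R^{\cX(U)}$ is individually a base change, but that the punctured diagram is a base change \emph{as a diagram}, so that the induced map $\Gamma^{\cX}\to W^{\cX}$ and the two gap maps match up. This is exactly the universality of colimits in $[\cC,\cS]$ — equivalently, that the pullback functor $\Delta^{*}$, being a left adjoint in the locally cartesian closed topos $[\cC,\cS]$, preserves all colimits — applied to the natural identification $\hat{\cX} = \Delta^{*}\hat{\cX_{\pp}}$ established above. Everything else reduces to bookkeeping of the structure maps to $R^{K}$ and $(R^{K})^{\times n}$.
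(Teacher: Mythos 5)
Your proof is correct and follows the paper's own route exactly: the paper deduces the lemma from Lemma~\ref{lem:scc-cobase}, the limit-preservation of the Yoneda embedding (which identifies $\hat{\cX}$ with the base change of $\hat{\cX_{\pp}}$ along $\Delta$), and the stability of colimits under base change in the topos $[\cC,\cS]$ --- indeed the paper's stated proof is just ``Immediate since colimits in $[\cC, \cS]$ are stable by base change.'' You have simply spelled out the details, including the point that the identification must hold naturally in $U$ so that the gap maps agree, which the paper leaves implicit.
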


\begin{proof}
  Immediate since colimits in $[\cC, \cS]$ are stable by base change.
\end{proof}

Combining Lemma~\ref{lem:gen-cross-effect-cube-is-box} with
the definition of the fiberwise join, we deduce immediately that

\begin{lemma}
  \label{lem:scc-is-fw-join}
  For any strongly cocartesian cubical diagram $\cX$ in $\cC$, the
  cocartesian gap map $\gamma^{\cX}$ of the cube of representable
  functors $\hat{\cX}$ is given by the expression
  \[ \gamma^{\cX} = R^{k_1} \join_{R^K} \cdots \join_{R^K} R^{k_n} \]
  where $k_i = \cX(\emptyset \hookrightarrow \{i\}) : K \to K_i$.
\end{lemma}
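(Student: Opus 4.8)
The plan is to recognise both sides of the asserted equation as pullbacks, along the $n$-fold diagonal of $R^K$, of the single map $R^{k_1} \pp \cdots \pp R^{k_n}$; the identity then follows because two names for the same pullback must agree.

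First I would pin down $w^{\cX}$ explicitly. By definition $\cX_{\pp}$ is the cube obtained from Construction~\ref{cons:scc} applied to the family $\{k_i : K \to K_i\}_{i=1}^{n}$, where $K = \cX(\emptyset)$ and $K_i = \cX(\{i\})$. Lemma~\ref{lem:gen-cross-effect-cube-is-box} computes the cocartesian gap map of the associated cube of representables, giving
\[ w^{\cX} = \cogapcube{\cX_{\pp}} = R^{k_1} \pp \cdots \pp R^{k_n}. \]
Here each factor $R^{k_i} : R^{K_i} \to R^K$ has codomain $R^K$, since $k_i : K \to K_i$, so the target of the iterated pushout product is indeed $(R^K)^{\times n}$, matching the target recorded for $w^{\cX}$.

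Next I would invoke Lemma~\ref{lem:cocart-pb}, which presents $\gamma^{\cX}$ as the base change of $w^{\cX}$ along the diagonal $\Delta : R^K \to (R^K)^{\times n}$. Substituting the formula just obtained, this says exactly that $\gamma^{\cX}$ is the pullback of $R^{k_1} \pp \cdots \pp R^{k_n}$ along $\Delta$. But this pullback is, verbatim, the definition of the iterated fiberwise join $R^{k_1} \join_{R^K} \cdots \join_{R^K} R^{k_n}$ recorded in Example~\ref{example-po}. Comparing the two descriptions yields the claimed identity. I expect no genuine obstacle here, as all the real work is carried out in Lemmas~\ref{lem:gen-cross-effect-cube-is-box} and~\ref{lem:cocart-pb}; the only point needing a moment's attention is the indexing, namely that in applying Lemma~\ref{lem:gen-cross-effect-cube-is-box} the relevant family has common source $K$, so that all the maps $R^{k_i}$ share the target $R^K$ over which the fiberwise join is formed.
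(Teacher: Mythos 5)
Your proposal is correct and is essentially identical to the paper's own (very terse) argument: the paper deduces the lemma immediately by combining Lemma~\ref{lem:gen-cross-effect-cube-is-box}, which identifies $w^{\cX}$ with $R^{k_1} \pp \cdots \pp R^{k_n}$, with the pullback square of Lemma~\ref{lem:cocart-pb} and the definition of the iterated fiberwise join from Example~\ref{example-po}. Your added remark about the indexing (all $k_i$ sharing the source $K$, hence all $R^{k_i}$ sharing the target $R^K$) is a sound sanity check but not a departure from the paper's route.
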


The above discussion has an important special case, which we now
describe.  Note that a given
strongly cocartesian diagram $\cX$ is completely determined by the
family of maps
\[\left\{ f_i : \cX(\emptyset) \to \cX(\{i\}) \right\}_{1 \leq i \leq n}\]
Consequently, we may identify the category of strongly cocartesian
$n$-cubes $\cX$ such that $\cX(\emptyset) = K$ with the $n$-th
cartesian power of the coslice category $(\Cat{C}_{K/})^{\times n}$.
As $\cC$ has a terminal object, this category clearly has one
as well, an $n$-cube which we will denote by $\cT^K_n$ and which is
determined by $\cT^K_n(\emptyset) = K$ and $\cT^K_n(\{i\}) = \term$
for $1 \leq i \leq n$. More generally, the reader can easily check
that we have
\[ \cT^K_n(U) = K \join U \] in the sense of Remark \ref{rem:c-join}.
Applying Yoneda as in the proof of \ref{lem:cart-ortho} we find that
\[ \ph{\gamma^{\cT^K_n}}{F}= (t_nF)(K): F(K) \to \holim_U F(K \join U) \]
is Goodwillie's map $t_nF$ introduced
in the previous section.  As these distinguished strongly cocartesian
cubes play a central role in the theory and are entirely determined
by the given object $K$, it will be convenient to use the abbreviation
\[ \gamma^K_n : \Gamma^K_n \to R^K \]
and
\[ w^K_n : W^K_n \to (R^K)^{\times n}\] for the maps $\gamma^{\cT^K_n}$
and $w^{\cT^K_n}$ constructed above. Note that by construction
  $$ w^K_n=(w^K_1)^{\square\, n} $$
where $w^K_1=R^{K\to\term}:R^{\term}\to R^K$.
In this case, then the statement
of Lemma \ref{lem:cocart-pb} asserts that the square
\begin{equation}\label{eqn:fundamentalpullback}
  \begin{tikzcd}
    \Gamma^{K}_n \ar[d, "\gamma^{K}_n"'] \ar[r] \pbmark & W^{K}_n \ar[d, "w^{K}_n"] \\
    R^{K} \ar[r, "\Delta"'] & (R^{K})^{\times n}
  \end{tikzcd}
\end{equation}
is a pullback for any $K \in \cC$.

\begin{remark}
  \label{rem:ganea}
The pullback diagram~\eqref{eqn:fundamentalpullback} is analogous to a well-known construction in
  classical homotopy theory.  For a pointed space $(X,x)$,
  the $n$-fold \emph{fat wedge} of $X$, denoted $W_n(X)$ may be
  defined as the iterated pushout product
  \[
    \begin{tikzcd}
      W_n(X) \ar[d, "w_n"'] \\
      X^{\times n}
    \end{tikzcd} := \hspace{1ex}
    \left (
    \begin{tikzcd}
      \term \ar[d, "x"'] \\ X
    \end{tikzcd}
    \right )^{\pp n} .
  \]
  Note that it comes equipped with a canonical inclusion $w_n$ into the
  $n$-fold product as shown.  The pullback of this map along the
  diagonal $X \to X^{\times n}$ is known as the $n$-th \emph{Ganea fibration},
  and denoted $\Gamma_n(X)$.
  \[
    \begin{tikzcd}
      \Gamma_n(X) \ar[d, "\gamma_n"'] \ar[r] & W_n(X) \ar[d, "w_n"] \\
      X \ar[r, "\Delta"'] & X^{\times n}
    \end{tikzcd}
  \]
  Recall from Subsection~\ref{subsec:pp-defn} that the pullback of an
  $n$-fold pushout product along the diagonal map is called the $n$-fold
  fiberwise join.  Thus the map $\gamma_n$ may alternatively be described as
  \[ \gamma_n = \term \join_X \cdots \join_X \term \]
  From the discussion of the fiberwise join, then, it is immediately clear
  that the fiber of the map $\gamma_n$ has the description
  \[ \fib_x \gamma_n = (\Omega X)^{\join n} \]
  as is well known.

  In fact, this construction makes sense in any topos.  Returning to the
  situation at hand, when the category $\cC$ is pointed, we find that
  the representable functor $R^{\term}$ is in fact the terminal functor
  in $[\cC, \cS]$.  Hence for any object $K \in \cC$, the terminal map
  $K \to \term$ provides the representable functor $R^K$ with a canonical
  base point
  \[ R^{\term} \to R^K \] 
Examining the pullback diagram~\eqref{eqn:fundamentalpullback} above,
  we find that it is exactly the $n$-th Ganea fibration of the
  representable $R^K$ as calculated in the topos $[\cC, \cS]$, which
  is the justification for the notation introduced above.  From this
  perspective, Theorem \ref{thm:n-excisive-equals-Pn-local} (4) below
  may be read as saying that the Goodwillie localization of the
  functor category $[\cC, \cS]$ is obtained by inverting the $n$-th
  Ganea fibration of the representable $R^K$ for all $K \in \cC$.  

Let us also point out that diagram~\eqref{eqn:fundamentalpullback} is well-defined and still a pullback even if $\cC$ is not pointed.
\end{remark}

\subsection{A Characterization of $n$-excisive maps}
\label{subsec:n-excisive-maps}

We now proceed to give a number of characterizations of the class of
$n$-excisive maps as defined above.  The reader will perhaps not be
surprised to learn that they coincide with the $P_n$-local maps
determined by the localization functor
$P_n : [\cC, \cS] \to [\cC, \cS]$, though this is not a priori
obvious.  Furthermore, characterization (2) in the following theorem
provides the main tool for establishing the compatibility of
$P_n$-equivalences with the pushout product.

\begin{theorem}\label{thm:n-excisive-equals-Pn-local}
Let $f:F\to G$ be a map in $[\cC, \cS]$. The following statements are equivalent:
\begin{enumerate}
\item[\emph{(1)}] For every family of maps $\{h_i : K_i \to L_i\}_{i = 0}^{n}$ in $\cC$ we have
  \[R^{h_0} \pp \cdots \pp R^{h_n} \sperp f\]
\item[\emph{(2)}] For all $K \in \cC$ we have $w^{K}_{n+1}\sperp f$.
\item[\emph{(3)}] For every family of maps $\{h_i : K \to K_i \}_{i = 0}^{n}$ in $\cC$ we have
  \[R^{K_0} \join_{R^K} \cdots \join_{R^K} R^{K_n} \perp f \]
\item[\emph{(4)}] For all $K \in \cC$ we have $\gamma_{n+1}^K\perp f$.
\item[\emph{(5)}] The map $f$ is $P_n$-local. 
\item[\emph{(6)}] The map $f$ is $n$-excisive.
\end{enumerate}
\end{theorem}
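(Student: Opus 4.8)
The plan is to separate the six conditions into two groups: the elementary orthogonality conditions (1)--(4) together with (6), which I would dispatch by the calculus of fiberwise orthogonality developed above, and the localization condition (5), for which I would invoke Goodwillie's construction of $P_n$ from Proposition~\ref{prop:pn-properties}. Several implications are immediate. Conditions (2) and (4) are the respective special cases of (1) and (3) obtained by taking every $h_i$ to be a terminal map $K\to\term$, so $(1)\Rightarrow(2)$ and $(3)\Rightarrow(4)$ are free. For $(1)\Rightarrow(3)$ and $(2)\Rightarrow(4)$, I would use that a fiberwise join is, by the fiberwise-join item of Example~\ref{example-po}, the pullback of the corresponding pushout product along the diagonal $R^K\to (R^K)^{\times(n+1)}$ (this is diagram~\eqref{eqn:fundamentalpullback} in the case of (4)); since $\sperp$ passes to base changes and implies $\perp$ by Remark~\ref{remarksonfiberwiseorth}(1), the fiberwise form follows.

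The first real step is the reverse implication $(3)\Rightarrow(1)$, which upgrades external orthogonality of fiberwise joins to fiberwise orthogonality of pushout products. Here I would use characterization (7) of Proposition~\ref{prop:equivfwortho}: $R^{h_0}\pp\cdots\pp R^{h_n}\sperp f$ holds iff every base change of this pushout product is externally orthogonal to $f$. Since every object of $[\cC,\cS]$ is a colimit of representables and the left orthogonality class ${}^\perp\{f\}$ is closed under colimits in the arrow category, it suffices to test base changes along representables $R^N\to\prod_i R^{K_i}$, i.e.\ along tuples $g_i:K_i\to N$. Using that base change preserves products and colimits, that $\cC$ has finite colimits, and the Yoneda identification $R^{L_i}\times_{R^{K_i}}R^N = R^{L_i\sqcup_{K_i}N}$, such a base change is again a fiberwise join of representables over $R^N$, hence externally orthogonal to $f$ by (3). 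This gives $(1)\Leftrightarrow(3)$. The equivalence $(3)\Leftrightarrow(6)$ is then essentially definitional: by Lemma~\ref{lem:scc-is-fw-join} the maps appearing in (3) are exactly the cocartesian gap maps $\gamma^{\cX}$ of strongly cocartesian $(n+1)$-cubes, and Definition~\ref{defn:n-excisive-maps} declares $f$ to be $n$-excisive precisely when $\gamma^{\cX}\perp f$ for all such $\cX$.

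It remains to fold in the localization condition (5). One direction is clean: if $f$ is $P_n$-local then, since the pair of $P_n$-equivalences and $P_n$-local maps is a modality (Lemma~\ref{lem:lexloc-yields-modality}), we have $e\sperp f$ for every $P_n$-equivalence $e$. Now every pushout product $R^{h_0}\pp\cdots\pp R^{h_n}$ \emph{is} a $P_n$-equivalence: by Lemmas~\ref{lem:gen-cross-effect-cube-scc} and~\ref{lem:gen-cross-effect-cube-is-box} it is the cocartesian gap map of a strongly cocartesian $(n+1)$-cube, hence externally orthogonal to $F\to\term$ for every $n$-excisive $F$ by Corollary~\ref{cor:n-excisive-ortho}, which forces it to be inverted by $P_n$. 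Therefore $(5)\Rightarrow(1)$. The passage through the pushout-power presentation $w^K_{n+1}=(w^K_1)^{\pp(n+1)}$ of condition (2) is where I would bring in the strange adjunction of Proposition~\ref{prop:the-strange-adjunction}, rewriting $\fwd{w^K_{n+1}}{f}$ as an $(n+1)$-fold nested fiberwise diagonal in $w^K_1$ and testing it via Proposition~\ref{prop:fiberwisediagonalorth}.

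The hard part, and the place where genuinely new input is required, is the implication $(4)\Rightarrow(5)$: external orthogonality to the \emph{terminal} Ganea maps $\gamma^K_{n+1}$ alone must already force $P_n$-locality. Unwinding the pullback hom as in Lemma~\ref{lem:cart-ortho}, condition (4) says exactly that the naturality square of $f$ with respect to Goodwillie's map $t_nf:F\to T_nF$ is cartesian. The difficulty is that the base changes of the terminal cubes produce fiberwise joins built from the non-terminal maps $N\to\term\sqcup_K N$, so (4) is not visibly strong enough to recover the general condition (3) by the elementary argument of the second paragraph. To close this gap I would iterate $T_n$ and pass to the colimit $P_nF=\colim_k T_n^kF$, using that $P_n$ is idempotent and left exact (Proposition~\ref{prop:pn-properties}) so that the cartesian condition is inherited at each stage and survives the filtered colimit, yielding that $f\to P_nf$ is cartesian, i.e.\ $f$ is $P_n$-local. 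This step is precisely where Goodwillie's theorem that the $T_n$-tower computes the $n$-excisive localization becomes unavoidable, and I expect it to be the main obstacle; everything else is the orthogonality bookkeeping of the first three paragraphs. Tracing $(1)\Rightarrow(2)\Rightarrow(4)\Rightarrow(5)\Rightarrow(1)$ together with $(1)\Leftrightarrow(3)\Leftrightarrow(6)$ then closes all six equivalences.
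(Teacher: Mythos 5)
Your proposal is correct, and its decisive step coincides with the paper's: the implication (4)$\Rightarrow$(5) is proved there exactly as you describe, by recognizing $\ph{\gamma^K_{n+1}}{f}$ as the gap map of the naturality square for $t_n$, so that $f$ is a pullback of $T_nf$, hence of every $T_n^kf$, hence of $P_nf$ since finite limits commute with filtered colimits in $\cS$. Your steps (1)$\Rightarrow$(2), (2)$\Rightarrow$(4) via the pullback square~\eqref{eqn:fundamentalpullback} and Proposition~\ref{prop:equivfwortho}(7), and (3)$\Leftrightarrow$(6) via Lemma~\ref{lem:scc-is-fw-join} also match the paper, which organizes the cycle as (3)$\Rightarrow$(4)$\Rightarrow$(5)$\Rightarrow$(6)$\Rightarrow$(3) plus (1)$\Rightarrow$(2)$\Rightarrow$(4) and (6)$\Rightarrow$(1). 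Where you genuinely diverge is in returning to the fiberwise conditions: the paper proves (6)$\Rightarrow$(1) by noting the generating pushout products are orthogonal to $f$ by definition and then invoking (6)$\Rightarrow$(5) together with base-change stability of $P_n$-equivalences supplied by the lex-localization modality of Lemma~\ref{lem:lexloc-yields-modality}; your (5)$\Rightarrow$(1) is a variant of that idea, but your direct proof of (3)$\Rightarrow$(1) — reducing, by co-Yoneda and universality of colimits together with closure of ${}^\perp\{f\}$ under colimits in the arrow category, to base changes along representables, and identifying such a base change via $R^{L_i}\times_{R^{K_i}}R^N = R^{L_i\sqcup_{K_i}N}$ (using finite colimits in $\cC$) as again a fiberwise join $R^{L_0\sqcup_{K_0}N}\join_{R^N}\cdots\join_{R^N}R^{L_n\sqcup_{K_n}N}$ covered by (3) — has no counterpart in the paper. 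That argument buys a purely diagrammatic proof that external orthogonality to fiberwise joins already entails the fiberwise relation, independent of the localization machinery, and it lets you dispense with the paper's cube-chasing proof of (5)$\Rightarrow$(6), since you get (5)$\Rightarrow$(1)$\Rightarrow$(3)$\Leftrightarrow$(6). Two quibbles, neither a gap: the detour through Proposition~\ref{prop:the-strange-adjunction} for condition (2) is superfluous, since (2) is a special case of (1) and the adjunction formula is only needed later for Theorem~\ref{thm:adjunction-tricks}; and in (4)$\Rightarrow$(5) the stage-wise inheritance of cartesianness requires left exactness of $T_n$ (immediate from its finite-limit definition), rather than idempotence of $P_n$ as you state.
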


\begin{proof}
  We will begin with the equivalences $(3) \Leftrightarrow (4) \Leftrightarrow (5) \Leftrightarrow (6)$.

  $(3) \Rightarrow (4)$ This is the special case $K_i = \term$ for all $i$

  $(4)\Rightarrow (5)$ Examining the definition, we find that
  $\ph{\gamma^{K}_{n+1}}{F}$ is the cartesian gap map of the
  commutative square
  \[
    \begin{tikzcd}
      F(K) \ar[r]\ar[d] & \lim\limits_{U\neq\emptyset}F(K\term U) \ar[d] \\ 
      G(K) \ar[r] & \lim\limits_{U\neq\emptyset}G(K\term U),
    \end{tikzcd}
  \]
  Hence if $\gamma_{n+1}^K\perp f$, this square is a pullback.
  Recognizing the right vertical map as $T_n(f)$, it follows that $f$
  is a pullback of $T_n(f)$.  But then it is a pullback of all
  composites $T_n^kf$ because $T_n$ preserves finite limits. Since
  finite limits commute with filtered colimits in \Cat{S}, $f$ is a
  pullback of $P_nf=\colim_kT_n^kf$, ie.\! $f$ is $P_n$-local.
  
  $(5) \Rightarrow (6)$ Now assume that $f$ is $P_n$-local and let $\cX$ be a strongly cocartesian $(n+1)$-cube. Write $K=\cX(\emptyset)$. Consider the following commutative diagram:
  \[
  \begin{tikzcd}[bo column sep=large]
    & P_nF(K) \ar[rr] \ar[dd] & & \lim_{U\neq\emptyset} P_nF(\scr{X}(U)) \ar[dd] \\
    F(K) \ar[rr, crossing over] \ar[dd] \ar[ur] & & \lim_{U\neq\emptyset} F(\scr{X}(U)) \ar[ur] & \\
    & P_nG(K) \ar[rr] & & \lim_{U\neq\emptyset} P_nG(\scr{X}(U)) \\
    G(K) \ar[rr] \ar[ur] & & \lim_{U\neq\emptyset} G(\scr{X}(U)) \ar[ur]\ar[from=uu, crossing over] & 
  \end{tikzcd}
  \]
  We need to show that the front face is a pullback (see Definition
  \ref{defn:n-excisive-maps}) . The right and left faces are a
  pullbacks by assumption. The back square is trivially a pullback:
  both horizontal maps are isomorphisms because $P_nF$ and $P_nG$ are
  $n$-excisive functors by Proposition~\ref{prop:pn-properties}. 
 Thus, the composite diagonal square is a
  pullback. Hence, the front is also a pullback.

  $(6) \Rightarrow (3)$ According to Lemma~\ref{lem:scc-is-fw-join}, the cocartesian
  gap map of any strongly cocartesian diagram can be expressed in this form.
  Hence if $f$ is $n$-excisive, it is orthogonal to such a map by definition.
  
  We now treat statements $(1)$ and $(2)$.
  
  $(1) \Rightarrow (2)$ This is the special case where $h_i = K \to \term$ for all $i$.

  $(2) \Rightarrow (3)$ We have seen above that there is a pullback square
  \[
    \begin{tikzcd}
      \Gamma^{K}_{n+1} \ar[d, "\gamma^{K}_{n+1}"'] \ar[r] \pbmark & W^{K}_{n+1} \ar[d, "w^{K}_{n+1}"] \\
      R^{K} \ar[r, "\Delta"'] & (R^{K})^{\times n+1}
    \end{tikzcd}
  \]
  for any $K \in \cC$. But by the Definition~\ref{defiberwiseorth0} of fiberwise orthogonality, or more precisely by Proposition~\ref{prop:equivfwortho}(7), $f$ is orthogonal to any pullback of the map $w^{K}_{n+1}$, in particular $\gamma^{K}_{n+1}$ as claimed.
  
  $(6) \Rightarrow (1)$ Lemma  \ref{lem:gen-cross-effect-cube-is-box} identifies the map
  \[ R^{h_0} \pp \cdots \pp R^{h_n} \] as the cocartesian gap map of
  the strongly cocartesian cube determined by Construction
  \ref{cons:scc} and so the relation
  $R^{h_0} \pp \cdots \pp R^{h_n} \perp f$ holds by definition.  It
  remains to show that $f$ is orthogonal to any base change of this
  map.  But since we already know $(6) \Rightarrow (5)$, $f$ is
  $P_n$-local.  The result now follows since $P_n$-equivalences
  are stable by base change.
\end{proof}

\begin{remark}\label{rem:degree-n-warning}
  It is not possible to replace the fiberwise orthogonality relation
  $\sperp$ it items (1) and (2) with the weaker external orthogonality
  relation $\perp$.  Indeed, as pointed out in Example
  \ref{exmp:cross-effect}, the latter notion detects functors which
  are \emph{of degree $n$}, a strictly weaker condition.  
\end{remark}

\begin{remark}
In~\cite{ABFJ:gbm} we deduce the classical Blakers-Massey theorem from our generalized version by using the fact that the $n$-connected/$n$-truncated modalities are generated by pushout product powers of the map $S^0\to\term$. Theorem~\ref{thm:n-excisive-equals-Pn-local}(2) states that in the same sense the Goodwillie tower, that is the $n$-excisive modalities, are generated by the pushout product powers of the maps $w^K_1: R^\term\to R^K$ for all $K$ in $\cC$.
\end{remark}

We can now prove the main result of this section.  
Recall the fiberwise diagonal $\fwd{f}{g}$ of two maps $f$ and $g$ as defined
in~\ref{def:fiberwise-diagonal}. By Proposition~\ref{prop:fiberwisediagonalorth} it is an isomorphism if and only if the maps $f$ and $g$ are fiberwise
orthogonal. A crucial role in the proof of the next theorem is
played by the formula $\fwd{f\pp g}{h} = \fwd{f}{\fwd{g}{h}}$
demonstrated in Proposition~\ref{prop:the-strange-adjunction}. It
allows us to use adjunction tricks for fiberwise orthogonality.  The
reader is invited to compare the next theorem
with~\cite[Cor. 3.15]{ABFJ:gbm} where the $n$-connected/$n$-truncated
modalities for $n\ge -2$ are treated. 

\begin{theorem}
  \label{thm:adjunction-tricks}  
  Let $f$ be a $P_m$-equivalence, $g$ a $P_{n}$-equivalence and $h$ a
  $p$-excisive map.  Then:
  \begin{enumerate}
  \item[\emph{(1)}] The map $\fwd{w^K_n}{h}$ is $(p-n)$-excisive
  \item[\emph{(2)}] The map $f \pp w^K_n$ is a $P_{n+m}$-equivalence
  \item[\emph{(3)}] The map $\fwd{f}{h}$ is $(p-m-1)$-excisive
  \item[\emph{(4)}] The map $f \pp g$ is a $P_{m+n+1}$-equivalence
  \end{enumerate}
\end{theorem}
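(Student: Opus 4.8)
The whole theorem is an exercise in \emph{currying} for fiberwise orthogonality, and the four parts are best proved as a single cascade $(1)\Rightarrow(2)\Rightarrow(3)\Rightarrow(4)$, each step feeding the previous one through two structural inputs. The first is the strange adjunction of Proposition~\ref{prop:the-strange-adjunction}, $\fwd{a\pp b}{c}=\fwd{a}{\fwd{b}{c}}$, which by Proposition~\ref{prop:fiberwisediagonalorth} translates into the equivalence
\[ (a\pp b)\sperp c \quad\Longleftrightarrow\quad a\sperp\fwd{b}{c}. \]
The second is the dictionary of Theorem~\ref{thm:n-excisive-equals-Pn-local} between excisiveness and fiberwise orthogonality. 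I will use freely that the $n$-excisive modality $(\cL_n,\cR_n)=(P_n\text{-equivalences},\ n\text{-excisive}=P_n\text{-local maps})$ satisfies $\cL_n\sperp\cR_n$, and that \emph{membership} in either class is detected by this relation: a map lies in $\cL_n$ iff it is fiberwise orthogonal to every member of $\cR_n$ (the forward direction is $\cL_n\sperp\cR_n$; the converse holds because $f\sperp g$ implies $f\perp g$ and $\cL_n={}^\perp\cR_n$), and dually $\cR_n$ is detected by $\sperp$ against $\cL_n$ as well as by the generators via Theorem~\ref{thm:n-excisive-equals-Pn-local}(2).

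First I would prove (1). A map $\phi$ is $(p-n)$-excisive iff $w^{L}_{p-n+1}\sperp\phi$ for every $L$, by Theorem~\ref{thm:n-excisive-equals-Pn-local}(2). Applying the adjunction to $\phi=\fwd{w^K_n}{h}$ turns the defining condition into
\[ w^{L}_{p-n+1}\sperp\fwd{w^K_n}{h} \iff \bigl(w^{L}_{p-n+1}\pp w^K_n\bigr)\sperp h. \]
Since $w^K_n=(w^K_1)^{\pp n}$ with $w^K_1=R^{(K\to\term)}$, the map $w^{L}_{p-n+1}\pp w^K_n$ is a pushout product of $p+1$ maps of the form $R^{(\cdot\to\term)}$, and Theorem~\ref{thm:n-excisive-equals-Pn-local}(1) states exactly that any such map is fiberwise orthogonal to every $p$-excisive map. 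As $h$ is $p$-excisive, this holds, proving (1). Note that throughout the cascade leading to (4) the indices stay non-negative, so no convention for ``negative excisiveness'' is needed.

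The remaining parts are then purely formal. For (2), to show $f\pp w^K_n\in\cL_{n+m}$ it suffices to check $(f\pp w^K_n)\sperp h$ for every $(n+m)$-excisive $h$; by the adjunction this reads $f\sperp\fwd{w^K_n}{h}$, and by (1) with $p=n+m$ the map $\fwd{w^K_n}{h}$ is $m$-excisive, so lies in $\cR_m$, whence $f\sperp\fwd{w^K_n}{h}$ holds because $f\in\cL_m$. For (3), $\fwd{f}{h}$ is $(p-m-1)$-excisive iff $w^K_{p-m}\sperp\fwd{f}{h}$ for all $K$; the adjunction rewrites this as $(w^K_{p-m}\pp f)\sperp h$, and by (2) the map $w^K_{p-m}\pp f$ is a $P_p$-equivalence, hence fiberwise orthogonal to the $p$-excisive $h$. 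Finally, for the headline statement (4), to show $f\pp g\in\cL_{m+n+1}$ it suffices to check $(f\pp g)\sperp h$ for every $(m+n+1)$-excisive $h$; the adjunction turns this into $f\sperp\fwd{g}{h}$, and (3) applied to the $P_n$-equivalence $g$ gives that $\fwd{g}{h}$ is $m$-excisive, so $f\sperp\fwd{g}{h}$ since $f$ is a $P_m$-equivalence.

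The point to stress is that there is essentially no computation left: all the real work has been front-loaded into Proposition~\ref{prop:the-strange-adjunction} and Theorem~\ref{thm:n-excisive-equals-Pn-local}, and the argument merely bounces between the left class ($P$-equivalences) and the right class ($p$-excisive maps) by repeatedly uncurrying a pushout-product factor. The only genuine subtlety, and the step I would be most careful about, is the index bookkeeping: each application of the adjunction must land back on a generator-type statement with exactly $p+1$ pushout-product factors, and the excisiveness degrees produced by (1) and (3) must match the localization degrees of the equivalences against which they are tested. Getting $\fwd{f}{h}$ to drop excisiveness by exactly $m+1$ when $f$ is a $P_m$-equivalence is precisely the content of why the bound in (4) is $m+n+1$ rather than $m+n$.
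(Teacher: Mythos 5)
Your proof is correct and takes essentially the same route as the paper: the same cascade $(1)\Rightarrow(2)\Rightarrow(3)\Rightarrow(4)$, driven by the adjunction $\fwd{a\pp b}{c}=\fwd{a}{\fwd{b}{c}}$ of Proposition~\ref{prop:the-strange-adjunction} together with the characterizations of Theorem~\ref{thm:n-excisive-equals-Pn-local}, with the detection of left-class membership via $\sperp$ (through $\sperp\Rightarrow\perp$ and $\cL={}^\perp\cR$) made explicit exactly where the paper leaves it implicit. Your one divergence is in part (1), where you test against mixed generators $w^L_{p-n+1}\pp w^K_n$ and invoke Theorem~\ref{thm:n-excisive-equals-Pn-local}(1); this is in fact slightly more careful than the paper's argument, whose factorization $w^K_{p+1}=w^K_{p-n+1}\pp w^K_n$ only verifies $w^L_{p-n+1}\sperp\fwd{w^K_n}{h}$ for $L=K$, whereas the application of Theorem~\ref{thm:n-excisive-equals-Pn-local}(2) requires it for all $L\in\cC$.
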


\begin{proof}
  (1). It is immediate from Lemma~\ref{lem:gen-cross-effect-cube-is-box} that
  \[ w^K_{p+1} = w^K_{p-n+1} \pp w^K_n \]
  Therefore, by Proposition~\ref{prop:the-strange-adjunction} we have
  \[ \fwd{w^K_{p+1}}{h} = \fwd{w^K_{p-n+1}}{\fwd{w^K_{n}}{h}} \] for
  all $K \in \cC$.  The map on the left is an isomorphism by the
  assumption that $h$ is $p$-excisive, and hence so is the one on the
  right.  Theorem~\ref{thm:n-excisive-equals-Pn-local} (2) then gives
  the desired result.

  (2). If $k$ is any $(n+m)$-excisive map, we have
  \[ \fwd{f \pp w^K_n}{k} = \fwd{f}{\fwd{w^K_n}{k}} \]
  But the map $\fwd{w^K_n}{k}$ is $m$-excisive by (1).

  (3). Again by Theorem~\ref{thm:n-excisive-equals-Pn-local} (2), it
  suffices to check that the map $\fwd{w^K_{p-m}}{\fwd{f}{h}}$ is an
  isomorphism for any $K \in \cC$.  But
  \[ \fwd{w^K_{p-m}}{\fwd{f}{h}} = \fwd{w^K_{p-m} \pp f}{h} \]
  and since $w^K_{p-m} \pp f$ is $p$-excisive by (2), the right
  map is an isomorphism.

  (4). Let $k$ be any $(m+n+1)$-excisive map.  Then since
  \[ \fwd{f \pp g}{k} = \fwd{f}{\fwd{g}{k}} \]
  and the map $\fwd{g}{k}$ is $(m+n+1) - n - 1 = m$ excisive,
  the result follows.
\end{proof}

The compatibility of the Goodwillie tower with the pushout product stated in Theorem~\ref{thm:adjunction-tricks}(4) is what we are really after. It will allow us to prove the Blakers-Massey analogue for the Goodwillie tower. One direct application is
\begin{example}\label{ex:pointed-functor}
Recall that a functor $F$ is $m$-reduced if the map $F\to\term$ is a $P_{m-1}$-equivalence. Let $F$ be $m$-reduced and $G$ be $n$-reduced. Then the map
\[
  (\term\to F)\pp(\term\to G)= (F\vee G\to F\times G)
\]
is a $P_{m+n-1}$-equivalence. Taking the cofiber it follows that $F\wedge G$ is $(m+n-1)$-reduced because, as a left class, $P_n$-equivalences are stable by cobase change. Similarly, the map
\[
   (F\to\term)\pp(G\to\term)= (F\join G\to\term) 
\]
is a $P_{m+n-1}$-equivalence, i.e. $F\join G$ is $(m+n-1)$-reduced.
\end{example}

\subsection{The Blakers-Massey Theorem for the Goodwillie Tower}

\begin{theorem}[Blakers-Massey theorem for Goodwillie Calculus]\label{thm:Goodwillie-Blakers-Massey}
Let
  \[
  \begin{tikzcd}
    F \ar[r, "g"] \ar[d, "f"'] & H \ar[d] \\
    G \ar[r] & K \pomark
  \end{tikzcd}
  \]
be a pushout square of functors. If $f$ is a $P_m$-equivalence and $g$ is a $P_n$-equivalence, then the induced map
  \[ \gap{f}{g} : F \to G \times_K H \]
is a $P_{m + n + 1}$-equivalence.
\end{theorem}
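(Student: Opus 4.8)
The plan is to deduce this from the generalized Blakers-Massey theorem (Theorem~\ref{thm:gen-bm}) applied to the $n$-excisive modality, using Theorem~\ref{thm:adjunction-tricks}(4) to control the relevant pushout product. Recall that Theorem~\ref{thm:gen-bm} takes a pushout square with maps $f$ and $g$ and, under the hypothesis that $\Delta f \pp \Delta g$ lies in the left class $\cL$ of the modality, concludes that the square is $\cL$-cartesian, i.e.\ that the gap map $\gap{f}{g}$ lies in $\cL$. For our $n$-excisive modality the left class is the class of $P_k$-equivalences for the appropriate $k$, so the entire task reduces to verifying the hypothesis: that $\Delta f \pp \Delta g$ is a $P_{m+n+1}$-equivalence.

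The main work, then, is to control the excisive degree of the diagonals $\Delta f$ and $\Delta g$. The key observation is that if $f$ is a $P_m$-equivalence, then its diagonal $\Delta f$ is again a $P_m$-equivalence. This follows because $P_m$ commutes with finite limits (Proposition~\ref{prop:pn-properties}(2)): applying $P_m$ to the defining pullback square of $\Delta f$ yields the diagonal of $P_m(f)$, and since $P_m(f)$ is an isomorphism its diagonal is too, so $P_m(\Delta f)$ is an isomorphism. First I would record this fact, obtaining that $\Delta f$ is a $P_m$-equivalence and $\Delta g$ is a $P_n$-equivalence.

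With that in hand, the hypothesis of Theorem~\ref{thm:gen-bm} follows directly from Theorem~\ref{thm:adjunction-tricks}(4): the pushout product of a $P_m$-equivalence with a $P_n$-equivalence is a $P_{m+n+1}$-equivalence. Applying this to $\Delta f$ and $\Delta g$ gives that $\Delta f \pp \Delta g$ is a $P_{m+n+1}$-equivalence, which is precisely the condition $\Delta f \pp \Delta g \in \cL$ needed to invoke the generalized Blakers-Massey theorem for the $(m+n+1)$-excisive modality. We then conclude that the square is $\cL$-cartesian, i.e.\ the gap map $\gap{f}{g}: F \to G \times_K H$ is a $P_{m+n+1}$-equivalence.

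I do not expect a serious obstacle here, since all the heavy lifting has already been done: Theorem~\ref{thm:adjunction-tricks}(4) packages the difficult pushout-product computation, and Theorem~\ref{thm:gen-bm} is cited from the companion paper. The only point requiring a small argument is the passage from $f$ to $\Delta f$, and this is immediate from the left-exactness of $P_n$. Thus the proof amounts to assembling these two results, and the statement follows in a few lines.
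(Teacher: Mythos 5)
Your proposal is correct and follows exactly the paper's own argument: deduce that $\Delta f$ and $\Delta g$ are $P_m$- and $P_n$-equivalences from the left-exactness of the localizations, apply Theorem~\ref{thm:adjunction-tricks}(4) to conclude $\Delta f \pp \Delta g$ is a $P_{m+n+1}$-equivalence, and invoke Theorem~\ref{thm:gen-bm} for the $(m+n+1)$-excisive modality. If anything, your treatment of the diagonal step is cleaner than the paper's, whose proof phrases this fact in terms of ``$k$-excisive'' maps when what is used (and what you correctly prove) is that the diagonal of a $P_k$-equivalence is a $P_k$-equivalence.
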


\begin{proof}
If a map $h$ is $k$-excisive then its diagonal $\Delta h$ is also $k$-excisive because $P_k$ is left exact.
Theorem~\ref{thm:adjunction-tricks}(4) then implies that $\Delta f\,\square\,\Delta g$ is a $P_{m+n+1}$-equivalence: $\Delta f\,\square\,\Delta g$ is in the left class of the modality associated to $P_{m+n+1}$. Now we apply the Theorem~\ref{thm:gen-bm} and learn that $\gap{f}{g}$ is in the same left class. Hence, the gap map is a $P_{m+n+1}$-equivalence.
\end{proof}

\begin{theorem}[``Dual'' Blakers-Massey theorem for Goodwillie Calculus]\label{thm:dual-Goodwillie-Blakers-Massey}
Let
  \[
  \begin{tikzcd}
    F \pbmark \ar[r] \ar[d] & H \ar[d, "f"] \\
    G \ar[r, "g"] & K
  \end{tikzcd}
  \]
be a pullback square of functors. If $f$ is a $P_m$-equivalence and $g$ is a $P_n$-equivalence, then the cogap map
  \[ \cogap{f}{g}: G\sqcup_F H \to K\]
is a $P_{m + n + 1}$-equivalence.
\end{theorem}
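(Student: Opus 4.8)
The plan is to apply the dual generalized Blakers--Massey theorem (Theorem~\ref{thm:dual-gbm}) to the $(m+n+1)$-excisive modality, in direct parallel with the proof of Theorem~\ref{thm:Goodwillie-Blakers-Massey} but invoking the \emph{dual} statement instead. Recall that Theorem~\ref{thm:dual-gbm} asserts: for any modality $(\cL,\cR)$ on a topos and any pullback square of the shape~\eqref{eqn:the-square}, if the pushout product $h\pp k$ of the bottom and right maps lies in the left class $\cL$, then the square is $\cL$-cocartesian, i.e.\ its cogap map lies in $\cL$. I will use this with $(\cL,\cR)$ the $(m+n+1)$-excisive modality of Definition~\ref{def:nexcisivemodality} on the topos $[\cC,\cS]$, whose left class is the class of $P_{m+n+1}$-equivalences.

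First I would match the given pullback square with~\eqref{eqn:the-square}. Reading off the diagram, one sets $Z=F$, $X=G$, $Y=H$ and $W=K$, so that the bottom map $h$ is $g$ and the right map $k$ is $f$. The hypothesis of Theorem~\ref{thm:dual-gbm} then asks that $h\pp k = g\pp f$ belong to $\cL$. Since the pushout product is symmetric, $g\pp f = f\pp g$, and by Theorem~\ref{thm:adjunction-tricks}(4) the pushout product of a $P_m$-equivalence with a $P_n$-equivalence is a $P_{m+n+1}$-equivalence; that is, $f\pp g\in\cL$. Applying Theorem~\ref{thm:dual-gbm} we conclude that the square is $\cL$-cocartesian, which is exactly the assertion that the cogap map $\cogap{f}{g}:G\sqcup_F H\to K$ is a $P_{m+n+1}$-equivalence.

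I expect this argument to be essentially immediate; in fact it is slightly \emph{cleaner} than the non-dual Theorem~\ref{thm:Goodwillie-Blakers-Massey}. There the hypothesis of the generalized Blakers--Massey theorem~\ref{thm:gen-bm} was the condition $\Delta f\pp\Delta g\in\cL$, which forced the extra observation that the diagonal of a $k$-excisive map is again $k$-excisive (a consequence of the left-exactness of $P_k$) before one could invoke Theorem~\ref{thm:adjunction-tricks}(4). By contrast, the hypothesis of the dual theorem~\ref{thm:dual-gbm} is phrased directly in terms of the pushout product $h\pp k$ of the two maps of the square, so here no diagonal manipulation is needed at all. The only point that requires any care is the correct bookkeeping of which maps of the square play the roles of $h$ and $k$, together with the commutativity of $\pp$; all of the genuine content has already been absorbed into Theorem~\ref{thm:adjunction-tricks}(4) and Theorem~\ref{thm:dual-gbm}.
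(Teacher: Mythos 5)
Your proposal is correct and coincides with the paper's own proof, which likewise applies Theorem~\ref{thm:adjunction-tricks}(4) to conclude that $f\pp g$ is a $P_{m+n+1}$-equivalence and then invokes Theorem~\ref{thm:dual-gbm} for the $(m+n+1)$-excisive modality. Your added bookkeeping (identifying $h=g$, $k=f$ and using symmetry of $\pp$) and the remark that no diagonal manipulation is needed here, unlike in Theorem~\ref{thm:Goodwillie-Blakers-Massey}, are both accurate.
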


\begin{proof}
By Theorem~\ref{thm:adjunction-tricks}(4) the map $f\pp g$ is a  $P_{m+n+1}$-equivalence.
By Theorem~\ref{thm:dual-gbm} the same holds for the cogap $\cogap{f}{g}$. 
\end{proof}

\subsection{Delooping theorems}

In this section, we rederive some of the fundamental delooping results
of \cite{G03}.  To begin, let us recall the following definition:

\begin{definition}
  Let $f : X \to Y$ be a map in a topos $\cE$.  We say that $f$ is a
  \emph{principal fibration} if there exists an object $B \in \cE$,
  map $h : Y \to B$ and cover $b : 1 \surj B$ such that the square
  \[
    \begin{tikzcd}
      X \ar[d, "f"'] \ar[r] \pbmark & 1 \ar[d, two heads, "b"]  \\
      Y \ar[r, "h"'] & B
    \end{tikzcd}
  \]
  is cartesian.
\end{definition}

Fix a functor $F:\cC\to\cS$.  It is easily checked that if $F$ is
$k$-excisive, then it is $n$-excisive for any $n \geq k$.  Hence the
universal property of $P_n$ implies that for any $k \leq n$ we have a
canonical map $q_{n,k} : P_n F \to P_k F$. They form the {\it
  Goodwillie tower of $F$}. The map $q_{n,k}$ is a $P_k$-equivalence
by construction.  We will use the abbreviation
\[
  q_n = q_{n,n-1} : P_n F \to P_{n-1}F. 
\]
Now let us denote by $C$ the pushout of the following diagram
\begin{equation}\label{eq:c-square}
  \begin{tikzcd}
    P_n F \ar[r, "q_{n,0}"] \ar[d, "q_nF"'] & P_0 F \ar[d, "c"] \\
    P_{n-1} F \ar[r] & C \pomark
  \end{tikzcd}
\end{equation}
in $[\Cat{C},\Cat{S}]$. We obtain an induced map
$\cogap{q_{n-1, 0}}{\id_{P_0F}} : C \to P_0 F$ so that we may regard
the above diagram as living in the slice category
$[\cC,\cS]\slice{P_0 F}$. Note that $c$ is a $P_{n-1}$-equivalence
because, as a left class of a factorization system,
$P_{n-1}$-equivalences are closed by cobase change. Clearly
$\id_{P_0F}$ is a $P_{n-1}$-equivalence. So $C\to P_0F$ is also a
$P_{n-1}$-equivalence. More vaguely stated, $C$ is pointed and
$n$-reduced relative to the constant functor $P_0F$.

Applying $P_n$ to the square~\eqref{eq:c-square} one obtains the induced square
\begin{equation}\label{eq:pn-square}
\begin{tikzcd}
  P_n F \ar[r] \ar[d, "q_nF"'] & P_0 F \ar[d, "P_nc" two heads] \\
  P_{n-1} F \ar[r] & P_nC 
\end{tikzcd}
\end{equation}
in $\Snexc\slice{P_0 F}$. By Remark~\ref{n-exc-colimits} this is still a pushout in $\Snexc\slice{P_0 F}$, but more is true.

\begin{theorem}\label{thm:delooping}
The square~\eqref{eq:pn-square} is cartesian and the map
  \[ q_n : P_n F \to P_{n-1} F \]
is a principal fibration in the topos $\Snexc\slice{P_0 F}$.
\end{theorem}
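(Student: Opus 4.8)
The plan is to treat the two assertions separately: the cartesianness of~\eqref{eq:pn-square} is a direct consequence of the Blakers--Massey theorem for the Goodwillie tower together with the left exactness of $P_n$, while the principal fibration structure is read off from the resulting pullback, the only nontrivial point being that the classifying map is genuinely a cover in the $n$-excisive topos.

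To prove that~\eqref{eq:pn-square} is cartesian, I would start from the pushout square~\eqref{eq:c-square}, in which the vertical map $q_nF=q_{n,n-1}$ is a $P_{n-1}$-equivalence and the horizontal map $q_{n,0}$ is a $P_0$-equivalence, both by construction of the tower. Theorem~\ref{thm:Goodwillie-Blakers-Massey} then shows that the gap map
\[ \gap{q_nF}{q_{n,0}} : P_nF \to P_{n-1}F \times_C P_0F \]
is a $P_{(n-1)+0+1}=P_n$-equivalence. Now apply $P_n$. Since $P_n$ is left exact (Proposition~\ref{prop:pn-properties}) it commutes with this pullback, and because $P_{n-1}F$ and $P_0F$ are already $n$-excisive one gets $P_n(P_{n-1}F \times_C P_0F) = P_{n-1}F \times_{P_nC} P_0F$ and $P_n(P_nF)=P_nF$. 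As the gap map is a $P_n$-equivalence, $P_n$ carries it to an isomorphism $P_nF \to P_{n-1}F \times_{P_nC} P_0F$, which is precisely the statement that~\eqref{eq:pn-square} is cartesian.

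For the second assertion I would work in $\cE = \Snexc\slice{P_0 F}$, whose terminal object $1$ is $P_0F$ with its identity structure map. Reading~\eqref{eq:pn-square} as a square in $\cE$, its upper right corner $P_0F$ is exactly this terminal object, so the cartesian square just obtained has precisely the shape demanded by the definition of a principal fibration, with object $B=P_nC$, classifying map $h$ the lower horizontal map $P_{n-1}F \to P_nC$, and map $b:1\to B$ equal to $P_nc$. Thus $q_n$ is a principal fibration provided $b=P_nc$ is a cover in $\cE$; since the forgetful functor $\cE \to \Snexc$ preserves and reflects effective epimorphisms, it suffices to show that $P_nc$ is a cover in $\Snexc$.

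This last coverness is where I expect the real work to lie, and it is exactly the point that calls on Appendix~\ref{sec:appendix-epis-monos}. The relevant input, already recorded before the theorem, is that the structure map $C\to P_0F$ is a $P_{n-1}$-equivalence, hence a $P_0$-equivalence; equivalently $c$, and therefore $P_nc$, becomes an isomorphism after applying $P_0$. The characterization of covers in the topos $\Snexc$ given in Theorem~\ref{thm:epi-mono-nexc} then allows one to conclude that $P_nc$ is an effective epimorphism. I would emphasize that this step cannot be shortcut by working in the presheaf category $[\cC,\cS]$: there $c$ is the cobase change of $q_nF$, which is not a cover in general, so the statement genuinely depends on the finer analysis of epimorphisms in the $n$-excisive topos.
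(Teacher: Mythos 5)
Your proposal is correct and follows essentially the same route as the paper's proof: cartesianness of~\eqref{eq:pn-square} via Theorem~\ref{thm:Goodwillie-Blakers-Massey} applied to the pushout~\eqref{eq:c-square} (the paper phrases your ``apply $P_n$ and use left exactness'' step as ``a $P_n$-equivalence between $n$-excisive functors is an isomorphism,'' which is the same computation), and the principal fibration structure reduced to the coverness of $P_nc$ in $\Snexc$, obtained from Theorem~\ref{thm:epi-mono-nexc}(2) together with the previously recorded fact that $c$ is a $P_{n-1}$-equivalence, hence a $P_0$-equivalence. Your closing observation that the coverness genuinely lives in the $n$-excisive topos rather than in $[\cC,\cS]$ is an accurate reading of why the appendix is needed.
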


\begin{proof}
  To see that the square is cartesian, it suffices to work in the
  ambient topos $[\cC,\cS]$, since the forgetul functors 
  $\Snexc\slice{P_0 F}\to \Snexc \to [\cC,\cS]$
  preserves and reflects pullbacks.
  The map $q_{n,0}$ is a $P_0$-equivalence
  and $q_n$ is a $P_{n-1}$-equivalence.  Hence, applying
  Theorem~\ref{thm:Goodwillie-Blakers-Massey}, we find that the
  cartesian gap map $P_nF \to P_{n-1} F \times_{P_n C} P_0 F$ is a
  $P_n$-equivalence.  But since the source and target of this map are
  $n$-excisive, the gap map is an isomorphism. Thus the
  square~\ref{eq:pn-square} is cartesian as claimed.

  Unwinding the definition of principal fibration given above in the
  slice category $\Snexc\slice{P_0F}$, we find that it remains to
  verify that the map $P_n c$ is a cover.  In fact, it suffices to
  check the statement in the category $\Snexc$. By
  Proposition~\ref{thm:epi-mono-nexc} it suffices to check that the
  map $P_n c$ is a $P_0$-equivalence. But it is even a
  $P_{n-1}$-equivalence since $c$ is, as we have already observed.
\end{proof}

If the category $\cC$ is pointed, there exists a canonical map $F(\term) \to F(X)$ for any $X\in \cC$. This induces maps $P_0F \to F\to P_nF$. 
We define $D_nF$, the {\em $n$-th homogeneous layer of $F$}, by the pullback square
\[
\begin{tikzcd}
D_nF \ar[r]\ar[d] & P_nF\ar[d] \\
P_0F \ar[r] & P_{n-1}F.
\end{tikzcd}
\]

\begin{cor}\label{cor:delooping}
The functor $P_nC$ is a delooping of $D_nF$ in the categories $[\cC,\cS]\slice{P_0F}$ and $\Snexc\slice{P_0F}$ in the sense that $D_nF = \Omega_{P_0F} P_nC$.
\end{cor}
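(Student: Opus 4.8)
The plan is to prove that $D_nF = \Omega_{P_0F} P_nC$ by recognizing that the loop space in the slice $[\cC,\cS]\slice{P_0F}$ of the principal fibration $q_n : P_nF \to P_{n-1}F$ over its structure map is precisely the fiber of $q_n$, and then identifying that fiber with $D_nF$. The key input is Theorem~\ref{thm:delooping}, which establishes both that square~\eqref{eq:pn-square} is cartesian and that $q_n$ is a principal fibration in $\Snexc\slice{P_0F}$ classified by the map $P_nc : P_0F \to P_nC$ (over $P_0F$, viewing $P_0F$ as the terminal object of the slice).

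First I would recall the general topos-theoretic fact underlying the terminology: in any topos $\cE$, if $f : X \to Y$ is a principal fibration classified by a pointed map $\term \to B$ with $X$ the pullback of $\term \to B \leftarrow Y$, then the fiber of $f$ is $\Omega B$. Working in the slice $\Snexc\slice{P_0F}$ — whose terminal object is $\id_{P_0F}$ and where loops $\Omega_{P_0F}$ are computed relative to $P_0F$ — the principal fibration structure from Theorem~\ref{thm:delooping} exhibits $q_n$ as the pullback of the point $\id_{P_0F} \to P_nC$ along $P_{n-1}F \to P_nC$. Hence the fiber of $q_n$ over the canonical point coming from $P_0F \to P_{n-1}F$ is $\Omega_{P_0F}(P_nC)$.

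Next I would identify this same fiber with $D_nF$. Since square~\eqref{eq:pn-square} is cartesian (by Theorem~\ref{thm:delooping}) and the forgetful functors $\Snexc\slice{P_0F} \to \Snexc \to [\cC,\cS]$ preserve and reflect pullbacks, the fiber of $q_n : P_nF \to P_{n-1}F$ taken over the basepoint $P_0F \to P_{n-1}F$ agrees with the fiber computed in $[\cC,\cS]$. But by the very definition of $D_nF$ as the pullback of $P_nF \to P_{n-1}F \leftarrow P_0F$, this fiber is exactly $D_nF$. The pointedness of $\cC$ supplies the section $P_0F \to P_nF \to P_{n-1}F$ needed to make these basepoints coherent, so the two descriptions of the fiber literally coincide.

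The main obstacle, and the step requiring the most care, will be bookkeeping the relativization: all constructions live over $P_0F$, so ``loop space'' means $\Omega_{P_0F}$ and ``fiber'' and ``point'' are taken in the slice. I would need to check that the basepoint of $P_nC$ used to form $\Omega_{P_0F}P_nC$ matches the one induced by $P_0F \to P_{n-1}F \to P_nC$, and that computing $\Omega_{P_0F}$ of the principal fibration's classifying object reduces, via the pullback square, to the fiber of $q_n$. Once the cartesianness of~\eqref{eq:pn-square} and the principal-fibration structure are in hand from Theorem~\ref{thm:delooping}, the identification $D_nF = \Omega_{P_0F}P_nC$ follows formally, and the statement holds equally in $[\cC,\cS]\slice{P_0F}$ and $\Snexc\slice{P_0F}$ because the relevant pullbacks are preserved and reflected by the inclusion of the localization.
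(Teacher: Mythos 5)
Your proposal is correct and takes essentially the same route as the paper: both arguments come down to pasting the cartesian square~\eqref{eq:pn-square} from Theorem~\ref{thm:delooping} with the defining pullback square of $D_nF$ to identify $D_nF$ with $P_0F\times_{P_nC}P_0F=\Omega_{P_0F}P_nC$, and both transfer the statement between $[\cC,\cS]\slice{P_0F}$ and $\Snexc\slice{P_0F}$ via the limit-preserving fully faithful inclusion. Your principal-fibration framing is only a repackaging of that pullback pasting (the cover property of $P_nc$ plays no role in this corollary, only cartesianness), and the basepoint-coherence check you flag is the same routine verification the paper leaves implicit.
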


This corollary yields a proof of Goodwillie's delooping result for
homogeneous functors independent of Section 2 of his paper~\cite{G03}.
\begin{proof}
We have the following commutative diagram
\[
\begin{tikzcd}
D_nF \ar[r]\ar[d] \pbmark & P_nF\ar[d] \ar[r]  \pbmark & P_0F\ar[d] \\
P_0F \ar[r] & P_{n-1}F \ar[r] & P_nC.
\end{tikzcd}
\]
where both squares are pullbacks.  We deduce that
$D_nF = \Omega_{P_0F} P_nC$ where $\Omega_{P_0F}$ denotes the loop
functor in the category $[\cC,\cS]\slice{P_0F}$.  Moreover, as every
object in the above diagram is $n$-excisive, we may regard the diagram
as living in the subcategory $\Snexc\slice{P_0F}$, and since the
inclusion $\Snexc\slice{P_0F} \hookrightarrow [\cC,\cS]\slice{P_0F}$
is fully-faithful and preserves limits, the second assertion follows
as well.
\end{proof}

\begin{theorem}\label{thm:segmentspushout}
Consider a cocartesian square
  \[
  \begin{tikzcd}
    F \ar[r, "g"] \ar[d, "f"'] & H \ar[d] \\
    G \ar[r] & K \pomark
  \end{tikzcd}
  \]
in $[\cC,\cS]$ where $f$ and $g$ are $P_n$-equivalences and $F,G$ and $H$ are $(2n+1)$-excisive. Then the induced square
  \[
  \begin{tikzcd}
    F \ar[r, "g"] \ar[d, "f"'] & H \ar[d] \\
    G \ar[r] & P_{2n+1}K
  \end{tikzcd}
  \]
is cartesian in $[\cC,\cS]$.
\end{theorem}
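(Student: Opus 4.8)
The plan is to compare the given square with its image under $P_{2n+1}$ and thereby reduce the statement to the Blakers--Massey theorem for the Goodwillie tower. First I would apply Theorem~\ref{thm:Goodwillie-Blakers-Massey} to the original pushout square: since $f$ and $g$ are both $P_n$-equivalences, the theorem (with $m=n$) tells me that the cartesian gap map
\[
\gap{f}{g} : F \to G \times_K H
\]
is a $P_{2n+1}$-equivalence. This is the only place where the pushout hypothesis and the Blakers--Massey result enter.

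Next I would express the gap map of the \emph{modified} square as a composite. The canonical map $K \to P_{2n+1}K$ induces a map $G \times_K H \to G \times_{P_{2n+1}K} H$ compatible with the projections to $G$ and $H$, and since the legs $F \to G$ and $F \to H$ are unchanged, the gap map of the new square factors as
\[
F \xrightarrow{\gap{f}{g}} G \times_K H \to G \times_{P_{2n+1}K} H.
\]
I would then show the second map is also a $P_{2n+1}$-equivalence. Since $P_{2n+1}$ is left exact (Proposition~\ref{prop:pn-properties}(2)), it computes both pullbacks as
\[
P_{2n+1}(G \times_K H) = P_{2n+1}G \times_{P_{2n+1}K} P_{2n+1}H, \qquad P_{2n+1}(G \times_{P_{2n+1}K} H) = P_{2n+1}G \times_{P_{2n+1}P_{2n+1}K} P_{2n+1}H,
\]
and the induced comparison is the identity on the two outer legs while being $P_{2n+1}$ applied to the unit $K \to P_{2n+1}K$ on the base. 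As that unit is a $P_{2n+1}$-equivalence by construction (Proposition~\ref{prop:pn-properties}(3)) and $P_{2n+1}$ is idempotent, the comparison of pullbacks becomes an isomorphism. Hence the second map is a $P_{2n+1}$-equivalence, and composing, $F \to G \times_{P_{2n+1}K} H$ is a $P_{2n+1}$-equivalence.

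Finally I would promote this $P_{2n+1}$-equivalence to an isomorphism exactly as in the proof of Theorem~\ref{thm:delooping}. The source $F$ is $(2n+1)$-excisive by hypothesis, and the target $G \times_{P_{2n+1}K} H$ is a limit of $(2n+1)$-excisive functors ($G$ and $H$ by hypothesis, and $P_{2n+1}K$ by construction), hence itself $(2n+1)$-excisive because $[\cC,\cS]\exc{2n+1}$ is a reflective subcategory and so closed under limits. A $P_{2n+1}$-equivalence between $(2n+1)$-excisive functors is an isomorphism, so the gap map $F \to G \times_{P_{2n+1}K} H$ is an isomorphism and the modified square is cartesian. The one delicate point I expect is the middle step: verifying that the comparison map between the two pullbacks is governed \emph{solely} by the base map $K \to P_{2n+1}K$, so that left-exactness and idempotence of $P_{2n+1}$ suffice to make it invertible. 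Everything else is a direct appeal to Theorem~\ref{thm:Goodwillie-Blakers-Massey} and the standard excisiveness bookkeeping.
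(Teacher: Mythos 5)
Your proof is correct and follows essentially the same route as the paper: apply Theorem~\ref{thm:Goodwillie-Blakers-Massey} to get that $\gap{f}{g}:F\to G\times_K H$ is a $P_{2n+1}$-equivalence, compose with the comparison map $G\times_K H\to G\times_{P_{2n+1}K}H$ induced by $K\to P_{2n+1}K$, and conclude by noting a $P_{2n+1}$-equivalence between $(2n+1)$-excisive functors is an isomorphism. The only difference is that you spell out, via left-exactness and idempotence of $P_{2n+1}$, why the comparison map is a $P_{2n+1}$-equivalence --- a step the paper simply asserts --- and your justification of it is sound.
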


\begin{proof}
By Theorem~\ref{thm:Goodwillie-Blakers-Massey} the gap
\[
   (f,g): F\to G\times_K H 
\]
is a $P_{2n+1}$-equivalence. The comparison map
\[
   G\times_K H\to G\times_{P_{2n+1}K} H 
\]
induced by $K\to P_{2n+1}K$ is also a $P_{2n+1}$-equivalence. So their composition is a $P_{2n+1}$-equivalence between $(2n+1)$-excisive functors. Hence it is an isomorphism.
\end{proof}

\begin{cor}\emph{(Arone-Dwyer-Lesh~\cite[Thm. 4.2]{Arone-Dwyer-Lesh})}
\label{thm:ADL}
For every $n$-reduced functor $F$ the canonical map
\[
   P_{2n-1}F\to\Omega P_{2n-1}\Sigma F 
\]
is an isomorphism. If $F$ is also $(2n-1)$-excisive it is infinitely deloopable.
\end{cor}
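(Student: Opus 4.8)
The plan is to reduce the whole statement to a single application of Theorem~\ref{thm:segmentspushout} to the suspension square, after first replacing $F$ by its $(2n-1)$-excisive approximation. Before doing so I would record two preliminary facts about the (unreduced) suspension $\Sigma F = \term\sqcup_F\term$, both consequences of the fact that $P_k$, viewed as a functor into the category of $k$-excisive functors, preserves colimits (Remark~\ref{n-exc-colimits}). First, (a) $\Sigma$ carries $P_k$-equivalences to $P_k$-equivalences: indeed $P_k(\term\sqcup_F\term)$ is computed as the pushout of $P_k\term\leftarrow P_kF\to P_k\term$ in the excisive localization, so it depends on $F$ only through $P_kF$. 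Second, (b) if $F$ is $n$-reduced, i.e.\ $F\to\term$ is a $P_{n-1}$-equivalence, then so is $\Sigma F$, since by (a) the map $\Sigma F\to\Sigma\term=\term$ is a $P_{n-1}$-equivalence. Finally I note that $P_{2n-1}F$ is again $n$-reduced, because $P_{n-1}P_{2n-1}=P_{n-1}$ and hence $P_{n-1}(P_{2n-1}F)=P_{n-1}F=\term$.

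For the first assertion, set $\tilde F:=P_{2n-1}F$, which is $(2n-1)$-excisive and, by the observation above, still $n$-reduced. Applying Theorem~\ref{thm:segmentspushout} with its index taken to be $n-1$ to the suspension pushout square
\[
\begin{tikzcd}
\tilde F \ar[r] \ar[d] & \term \ar[d] \\
\term \ar[r] & \Sigma\tilde F \pomark
\end{tikzcd}
\]
(whose two legs $\tilde F\to\term$ are $P_{n-1}$-equivalences and whose objects $\tilde F$, $\term$, $\term$ are $(2(n-1)+1)=(2n-1)$-excisive) shows that the square obtained by replacing $\Sigma\tilde F$ with $P_{2n-1}\Sigma\tilde F$ is cartesian. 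This identifies $\tilde F\simeq\term\times_{P_{2n-1}\Sigma\tilde F}\term=\Omega P_{2n-1}\Sigma\tilde F$. Since $F\to\tilde F$ is a $P_{2n-1}$-equivalence, fact (a) gives $P_{2n-1}\Sigma\tilde F=P_{2n-1}\Sigma F$, whence $P_{2n-1}F\simeq\Omega P_{2n-1}\Sigma F$. A short naturality check confirms that this isomorphism is the one induced by the canonical unit map $F\to\Omega\Sigma F$, namely the gap map of the suspension square, after applying $P_{2n-1}$ and commuting it past $\Omega$ (legitimate, as $P_{2n-1}$ is left exact).

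For the second assertion, suppose in addition that $F$ is $(2n-1)$-excisive, so that $P_{2n-1}F=F$ and the first part reads $F\simeq\Omega(P_{2n-1}\Sigma F)$. Write $BF:=P_{2n-1}\Sigma F$; it is $(2n-1)$-excisive by construction and $n$-reduced by (b), so it satisfies exactly the same hypotheses as $F$. Applying the first part to $BF$, and using (a) to simplify $P_{2n-1}\Sigma(P_{2n-1}\Sigma F)=P_{2n-1}\Sigma^2F$, yields $BF\simeq\Omega B^2F$ with $B^2F=P_{2n-1}\Sigma^2F$. Iterating, one obtains $F\simeq\Omega^k\!\left(P_{2n-1}\Sigma^kF\right)$ for every $k\geq 1$, which exhibits $F$ as infinitely deloopable with deloopings $P_{2n-1}\Sigma^kF$.

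I expect the main obstacle to lie not in the application of Theorem~\ref{thm:segmentspushout}, which becomes essentially immediate once the indices are aligned (its index $n-1$ producing $P_{2n-1}$), but in the bookkeeping surrounding fact (a): verifying carefully that suspension preserves $P_k$-equivalences via the colimit-preservation of $P_k$, and checking that the isomorphism produced by the cartesian square agrees with the canonical map named in the statement. Everything else is a matter of tracking reducedness and excisiveness through the constructions.
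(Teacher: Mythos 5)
Your proof is correct, but it is not the proof the paper gives: the paper explicitly remarks that the corollary ``follows from Theorem~\ref{thm:segmentspushout}'' --- exactly the route you take --- and then deliberately gives a direct proof instead. The direct proof applies Theorem~\ref{thm:Goodwillie-Blakers-Massey} to the suspension square of $F$ itself: both legs $F\to\term$ are $P_{n-1}$-equivalences since $F$ is $n$-reduced, so the gap map $F\to\term\times_{\Sigma F}\term=\Omega\Sigma F$ is a $P_{(n-1)+(n-1)+1}=P_{2n-1}$-equivalence; applying the left exact functor $P_{2n-1}$ and commuting it past $\Omega$ yields the canonical map $P_{2n-1}F\to\Omega P_{2n-1}\Sigma F$ as an isomorphism on the nose. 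This buys two simplifications over your argument: because the Blakers-Massey theorem carries no excisiveness hypotheses, there is no need to first replace $F$ by $\tilde F=P_{2n-1}F$ and transport back along your fact (a), and the isomorphism produced is visibly the canonical map, so the naturality check you correctly flag as the delicate point of your route simply disappears. Conversely, your route makes good on the paper's unproved aside that Theorem~\ref{thm:segmentspushout} suffices, and your explicit identification of the $k$-fold deloopings as $P_{2n-1}\Sigma^k F$ is sharper than the paper's one-line iteration (``take $P_{2n-1}\Sigma F$ in place of $F$''). Your fact (a), derived from Remark~\ref{n-exc-colimits}, is the same content as the paper's appeal to stability of the left class under colimits in $\cE^\to$ (which it uses only to see that $\Sigma F$ is again $n$-reduced); both justifications are valid. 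Since Theorem~\ref{thm:segmentspushout} is itself deduced from Theorem~\ref{thm:Goodwillie-Blakers-Massey}, the two proofs ultimately rest on the same Blakers-Massey input, yours being a longer but entirely sound detour.
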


The assertion follows from Theorem~\ref{thm:segmentspushout} but we going to give a direct proof. 
\begin{proof}
The isomorphism follows by applying Theorem~\ref{thm:Goodwillie-Blakers-Massey} to the pushout square
\[
  \begin{tikzcd}
  F \ar[r] \ar[d] & \term \ar[d] \\
  \term \ar[r] & \Sigma F . \pomark
  \end{tikzcd} 
\]
Since the class of $P_{n-1}$-equivalence is stable by colimits in $\cE^\to$, the functor $\Sigma F$ is $n$-reduced when $F$ is.
The theorem may then be iterated by taking $P_{2n-1}\Sigma F$ in place of $F$.
\end{proof}

\begin{theorem}
Consider a cartesian square
  \[
  \begin{tikzcd}
    F \ar[r] \ar[d] \pbmark & H \ar[d, "h"] \\
    G \ar[r, "k"] & K 
  \end{tikzcd}
  \]
in $[\cC,\cS]$ where $h$ and $k$ are $P_n$-equivalences and $G,H$ and $K$ are $(2n+1)$-excisive. Then the cogap map $\cogap{h}{k}$ is a $P_{2n+1}$-equivalence and the square is cocartesian in $\Snexc$.
\end{theorem}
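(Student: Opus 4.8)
The plan is to obtain both conclusions from the dual Blakers--Massey theorem for the Goodwillie tower (Theorem~\ref{thm:dual-Goodwillie-Blakers-Massey}), reinterpreting its output inside the topos of $(2n+1)$-excisive functors. The first assertion is immediate, while the second amounts to translating a $P_{2n+1}$-equivalence into an isomorphism once colimits are computed in the excisive localization. This is the exact dualization of the proof of Theorem~\ref{thm:segmentspushout}.

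First I would handle the cogap map. Since the given square is cartesian and both $h$ and $k$ are $P_n$-equivalences, Theorem~\ref{thm:dual-Goodwillie-Blakers-Massey} applies with $m = n$, so that
\[ \cogap{h}{k} : G \sqcup_F H \to K \]
is a $P_{n+n+1} = P_{2n+1}$-equivalence. No further work is needed here.

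For the cocartesianness, I would first note that the entire square already lives in the localization $[\cC,\cS]\exc{2n+1}$: the object $F = G \times_K H$ is a finite limit of $(2n+1)$-excisive functors, and since $[\cC,\cS]\exc{2n+1}$ is a left exact localization it is closed under finite limits in $[\cC,\cS]$, so $F$ too is $(2n+1)$-excisive. By Remark~\ref{n-exc-colimits}, a colimit in $[\cC,\cS]\exc{2n+1}$ is computed by applying $P_{2n+1}$ to the ambient colimit; thus the pushout of $G \leftarrow F \to H$ taken in $[\cC,\cS]\exc{2n+1}$ is $P_{2n+1}(G \sqcup_F H)$, and the cocartesian gap map computed there is identified with $P_{2n+1}(\cogap{h}{k}) : P_{2n+1}(G \sqcup_F H) \to P_{2n+1}K = K$, using that $P_{2n+1}K = K$ because $K$ is $(2n+1)$-excisive. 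By the first part $\cogap{h}{k}$ is a $P_{2n+1}$-equivalence, so this map is an isomorphism, and hence the square is cocartesian in $[\cC,\cS]\exc{2n+1}$.

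I do not anticipate a serious obstacle. The one point that must be handled carefully is the identification of the cocartesian gap map \emph{computed in the localization} with $P_{2n+1}$ applied to the ambient cogap map; this rests on Remark~\ref{n-exc-colimits} together with the idempotence of $P_{2n+1}$ on the $(2n+1)$-excisive target $K$.
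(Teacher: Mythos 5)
Your proposal is correct and follows essentially the same route as the paper's proof: apply Theorem~\ref{thm:dual-Goodwillie-Blakers-Massey} with $m=n$, observe that $F$ is $(2n+1)$-excisive as a limit of $(2n+1)$-excisive functors, and use Remark~\ref{n-exc-colimits} to identify the pushout in the localization with $P_{2n+1}(G\sqcup_F H)$, so the localized cogap map is a $P_{2n+1}$-equivalence between $(2n+1)$-excisive functors and hence an isomorphism. Your explicit care about identifying the localized cogap map with $P_{2n+1}(\cogap{h}{k})$ via idempotence on $K$ is exactly the step the paper performs, only stated more tersely there.
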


\begin{proof}
By Theorem~\ref{thm:dual-Goodwillie-Blakers-Massey} the cogap map
\[
   \cogap{h}{k}:G\sqcup_F H\to K 
\]
is a $P_{2n+1}$-equivalence. Now note that $F$, as a limit of $(2n+1)$-excisive functors, is $(2n+1)$-excisive. Hence $P_{2n+1}(G\sqcup_F H)$ is the pushout in the category $\Snexc$. So the cogap map in $\Snexc$
\[   
   P_{2n+1}(G\sqcup_F H)\to K
\]
is a $P_{2n+1}$-equivalence between $(2n+1)$-excisive functors. Hence it is an isomorphism.
\end{proof}

\appendix
\section{Truncated and connected maps of $n$-excisive functors}
\label{sec:appendix-epis-monos}

Recall that every topos admits a factorization system consisting of
the monomorphisms and covers (whose definition we will recall
momentarily).  The goal of this appendix is to describe this modality
in the topos $[\cC, \cS]\exc{n}$ of $n$-excisive functors.

\begin{definition}
  A map $f:X\to Y$ in a topos $\cE$ a {\it monomorphism} if its diagonal map
  \[ \Delta f: X\to X\times_Y X \]
  is an isomorphism. A map is a {\it cover} if it is
  left orthogonal to every monomorphism.
\end{definition}

\begin{example}\label{exmp:em-spaces}
  In the category of spaces $\cS$, a map is a monomorphism if and
  only if it is the inclusion of a union of path components. The
  covers in spaces are exactly the maps that induce a surjection on
  the set of path components.
\end{example}

Before stating the next result, recall that an object $X$ in a
category $\cC$ is called \emph{discrete} if the space $\cC(K, X)$ is
discrete for every object $K\in \cC$.  Moreover, \cite[Proposition
5.5.6.18]{LurieHT} shows that if the category $\cC$ is presentable
(for example, if $\cC$ is a topos) then the inclusion of the full
subcategory $ \cE_0 \hookrightarrow \cE$ of discrete objects admits a
left adjoint $\tau_0 : \cE \to \cE_0$ which we will refer to as
\emph{$0$-truncation functor}.  A posteriori, we may characterize the
discrete objects $X \in \cE$ as those for which the canonical map
$X \to \tau_0 X$ is an isomorphism.

\begin{example} Here are examples of discrete objects.
\begin{enumerate}
\item If $\cE = \cS$, then $\tau_0$ is the functor sending a space $X$
  to its set $\pi_0 X$ of path components regarded as a discrete
  space.
\item If $\cE = [\cC, \cS]$ it is not hard to see that
  \[ (\tau_0F)(K) = \pi_0(F(K)) \] for every $F\in \cE$ and every
  $K\in \cC$, so that a functor $F$ is discrete if and only if it
  takes values in discrete spaces.
\end{enumerate}
\end{example}

The following folklore proposition asserts that 
monomorphisms and covers in a topos $\cE$ are
essentially determined by their restriction to discrete objects.

\begin{proposition}
  \label{prop:epi-mono-topos}
Let $f : X \to Y$ be a morphism in a topos. 
\begin{enumerate}
  \item[\emph{(1)}] The map $f$ is a monomorphism if and only if $\tau_0 f$ is a monomorphism and the square
    \[
    \begin{tikzcd}
      X \ar[r] \ar[d, "f"', rightarrowtail] \pbmark & \tau_0 X \ar[d, "\tau_0 f", rightarrowtail] \\
      Y \ar[r] & \tau_0 Y
    \end{tikzcd}
    \]
is a pullback.
  \item[\emph{(2)}] The map $f$ is a cover if and only if $\tau_0 f$ is a cover.
\end{enumerate}
\end{proposition}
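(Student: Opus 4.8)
The two statements are proved in this order, with (2) invoking (1). Throughout the plan I use only standard facts about truncation in an $\infty$-topos: that \emph{monomorphism} means $(-1)$-truncated and \emph{cover} means effective epimorphism $=(-1)$-connected; that $(\text{covers},\text{monomorphisms})$ is the image factorization system, with both classes stable under base change and satisfying the usual cancellations (if $gh$ is a cover then $g$ is a cover; if $gh$ is a monomorphism then $h$ is a monomorphism; a map lying in both classes is invertible); that every unit $\eta_X\colon X\to\tau_0 X$ is $0$-connected, with $\tau_0\eta_X$ invertible by idempotency; that the $\tau_0$-equivalences are precisely the $0$-connected maps; and that the class of $0$-connected maps is stable under base change \cite[Prop.~6.5.1.16]{LurieHT}. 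The easy implication of (1) is immediate: if $\tau_0 f$ is a monomorphism and the naturality square is a pullback, then $f$ is a base change of $\tau_0 f$ and is therefore a monomorphism.

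For the substantial implication of (1), assume $f\colon X\to Y$ is a monomorphism and set $P=Y\times_{\tau_0 Y}\tau_0 X$, with projections $q\colon P\to Y$ and $\psi\colon P\to\tau_0 X$ and the canonical comparison $\phi\colon X\to P$ determined by the square, so that $f=q\phi$ and $\eta_X=\psi\phi$. The plan is to show that $\phi$ is invertible by exhibiting it in both classes of the image factorization. Since $f=q\phi$ is a monomorphism, the cancellation property for monomorphisms gives that $\phi$ is a monomorphism. On the other hand $\psi$ is the base change of $\eta_Y$ along $\tau_0 f$, hence $0$-connected, hence a $\tau_0$-equivalence; applying $\tau_0$ to $\psi\phi=\eta_X$ yields $\tau_0\psi\circ\tau_0\phi=\tau_0\eta_X$, which is invertible, so $\tau_0\phi$ is invertible and $\phi$ is itself a $\tau_0$-equivalence, i.e. $0$-connected. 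A map that is simultaneously $0$-connected and a monomorphism is invertible, so $\phi$ is an isomorphism and the square is a pullback. Finally $\tau_0 f$ is a monomorphism, either because $\tau_0$ preserves monomorphisms (truncation preserves $k$-truncated maps), or because pulling $\Delta(\tau_0 f)$ back along the cover $\eta_Y$ recovers an isomorphism and base change along a cover is conservative.

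For (2) I would exploit that $\eta_X$ and $\eta_Y$ are covers. If $f$ is a cover, then $\tau_0 f\circ\eta_X=\eta_Y\circ f$ is a composite of covers and hence a cover, so by cancellation $\tau_0 f$ is a cover. Conversely, suppose $\tau_0 f$ is a cover and factor $f=m\circ e$ with $e$ a cover and $m\colon I\to Y$ a monomorphism. Applying $\tau_0$ and using that $\tau_0$ preserves covers (the forward implication) and monomorphisms, the composite $\tau_0 f=\tau_0 m\circ\tau_0 e$ is the image factorization of $\tau_0 f$; since $\tau_0 f$ is a cover, uniqueness of the factorization forces $\tau_0 m$ to be invertible. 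But $m$ is a monomorphism with $\tau_0 m$ invertible, so by part (1) its naturality square is a pullback and $m$ is the base change of the isomorphism $\tau_0 m$ along $\eta_Y$; thus $m$ is an isomorphism and $f\simeq e$ is a cover.

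The only genuine content is the pullback assertion in (1), and the step I expect to require the most care is the verification that the comparison map $\phi$ is $0$-connected. Its monomorphism half is purely formal, but its $0$-connectedness has to be squeezed out of the $0$-connectedness of the unit maps — here the two inputs that make it work are the stability of $0$-connected maps under base change (applied to $\psi$) and the idempotency of $\tau_0$ (applied to $\eta_X$). Once $\phi$ is seen to lie in both classes, the rest of both parts is formal bookkeeping with the two factorization systems and their cancellation properties.
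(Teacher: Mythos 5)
The paper itself offers no proof of this proposition --- it is stated as folklore and only used later --- so there is nothing to compare against; I am assessing your argument on its own merits. Its architecture is sound (show the comparison map $\phi\colon X\to P=Y\times_{\tau_0 Y}\tau_0 X$ lies in both classes of the image factorization; derive (2) from (1) by cancellation and the image factorization), and almost every step checks out: the cancellation arguments, the $0$-connectedness of the units, stability of connective maps under base change, idempotency giving $\tau_0\eta_X$ invertible, $\tau_0$ preserving monomorphisms, and the whole of part (2) \emph{granted} part (1) and cover-detection.

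But one of your ``standard facts'' is false, and it is load-bearing: the $\tau_0$-equivalences are \emph{not} precisely the $0$-connected maps. A $0$-connected map (fibers connected) is a $\tau_0$-equivalence, but not conversely: the double cover $S^1\to S^1$ in $\cE=\cS$ induces an isomorphism on $\tau_0$ while its fibers are $S^0$, which is disconnected. You use the true direction correctly for $\psi$ (base change of $\eta_Y$, hence $0$-connected, hence $\tau_0\psi$ invertible), but the crucial step ``$\tau_0\phi$ is invertible, so $\phi$ is a $\tau_0$-equivalence, i.e.\ $0$-connected'' invokes exactly the false direction, and without it you cannot conclude that $\phi$ is invertible. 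The repair is to aim lower: since $\phi$ is already a monomorphism, it suffices to show $\phi$ is a \emph{cover}, and covers are detected by $0$-truncation --- a map is an effective epimorphism if and only if its $0$-truncation is one \cite[Prop.~7.2.1.14]{LurieHT} --- so $\tau_0\phi$ invertible forces $\phi$ to be a cover, and a map that is both a cover and a monomorphism is invertible. Be careful, though, about how you import this detection principle: it is essentially the ``if'' half of part (2) itself, and your proof of (2) relies on (1), so deducing it from (2) would be circular. You must take it as an independent input (e.g.\ from Lurie, or via the observation that the subobject classifier is discrete, which yields (1) directly); once you do, the rest of your argument, including the derivation of (2) from (1), goes through as written.
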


We will prove the following result which characterizes monomorphisms
and covers in the topos of $n$-excisive functors:

\begin{theorem}
  \label{thm:epi-mono-nexc}
Let $f : F \to G$ be a map in $[\cC, \cS]\exc{n}$. Then:
  \begin{enumerate}
  \item[\emph{(1)}] The map $f$ is monomorphism if and only if $P_0 f$ is a monomorphism and the square
    \[
    \begin{tikzcd}
      F \ar[r] \ar[d, "f"', rightarrowtail] \pbmark & P_0 F \ar[d, "P_0 f", rightarrowtail] \\
      G \ar[r] & P_0 G
    \end{tikzcd}
    \]
is a pullback.
  \item[\emph{(2)}] The map $f$ is a cover if and only if $P_0 f$ is a cover.
  \end{enumerate}
\end{theorem}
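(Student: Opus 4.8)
The statement is the $P_0$-analogue of the general dichotomy in Proposition~\ref{prop:epi-mono-topos}, and the plan is to obtain it directly by exploiting that $P_0 : \Snexc \to [\cC,\cS]^{(0)}$ is a left exact left adjoint (Proposition~\ref{prop:pn-properties}). As such it preserves finite limits and all colimits, hence monomorphisms, covers, and the (cover, mono) factorization of any map. This already gives the easy implications ``$f$ a monomorphism $\Rightarrow$ $P_0 f$ a monomorphism'' and ``$f$ a cover $\Rightarrow$ $P_0 f$ a cover''. For the converse half of (1), if $P_0 f$ is a monomorphism and the displayed square is cartesian, then $f$ is the base change of $P_0 f$ along $G \to P_0 G$; since monomorphisms are stable under base change, $f$ is a monomorphism.

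Everything else reduces to the following rigidity lemma, which I expect to be the crux: \emph{a monomorphism $m : F \to G$ in $\Snexc$ with $P_0 m$ an isomorphism is itself an isomorphism.} Granting it, the converse in (2) follows by factoring $f = m \circ e$ as a cover $e$ followed by a monomorphism $m$ in $\Snexc$ and applying $P_0$, which yields a (cover, mono) factorization of $P_0 f$; so when $P_0 f$ is a cover, $P_0 m$ is both a cover and a monomorphism, hence an isomorphism, whence $m$ is an isomorphism and $f = e$ is a cover. The cartesianness in (1) is obtained by comparing $F$ with $F' := G \times_{P_0 G} P_0 F$, which is again $n$-excisive (a limit of $n$-excisive functors) and $P_0$-local; the canonical map $F \to F'$ is a monomorphism inducing an isomorphism after $P_0$, so the lemma forces $F \simeq F'$, which is exactly the assertion that the square is cartesian.

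It remains to prove the rigidity lemma, and the key idea is to \emph{bootstrap} the connectivity of $m$ through the dual Blakers--Massey theorem for the Goodwillie tower (Theorem~\ref{thm:dual-Goodwillie-Blakers-Massey}). Since $m$ is a monomorphism, the square
\[
\begin{tikzcd}
F \ar[r, equal] \ar[d, equal] \pbmark & F \ar[d, "m"] \\
F \ar[r, "m"'] & G
\end{tikzcd}
\]
is cartesian, this being precisely the condition $F \simeq F \times_G F$, and its cocartesian gap map $\cogap{m}{m}$ is again $m$, since the pushout $F \sqcup_F F$ of the two identities collapses to $F$. Thus if $m$ is a $P_d$-equivalence, Theorem~\ref{thm:dual-Goodwillie-Blakers-Massey} shows that $m = \cogap{m}{m}$ is a $P_{2d+1}$-equivalence. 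Starting from $d = 0$ (our hypothesis that $P_0 m$ is an isomorphism) and iterating $d \mapsto 2d+1$, we obtain that $m$ is a $P_d$-equivalence for arbitrarily large $d$; as soon as $d \geq n$ this forces $P_n m$ to be an isomorphism, and since $F$ and $G$ are $n$-excisive we have $m = P_n m$, so $m$ is an isomorphism. The main obstacle is establishing this rigidity, i.e. the interaction between truncatedness (being a monomorphism) and Goodwillie connectivity; the points requiring care are the identification of the self-pullback square and of its cogap with $m$, while circularity is avoided because Theorem~\ref{thm:dual-Goodwillie-Blakers-Massey} is proved before this appendix.
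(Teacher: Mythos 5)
Your proposal is correct, but it takes a genuinely different route from the paper's. The paper's crux is Proposition~\ref{prop:nexc-monos} (every $n$-excisive monomorphism is $0$-excisive, i.e.\ the naturality square over $P_0$ is cartesian), proved by induction on $n$: the base case $n=1$ rests on a $0$-truncation analysis --- discrete $1$-excisive functors are constant (Lemma~\ref{lem:one-excisive-discrete}) and $\tau_0^{(1)} = \tau_0 P_0$ on $1$-excisive functors (Lemma~\ref{lem:pi-naught-comm}) --- and the inductive step uses that $\fwd{w^K_n}{-}$ preserves monomorphisms together with Theorem~\ref{thm:adjunction-tricks}(1); part (2, $\Leftarrow$) is then an explicit lifting argument against monomorphisms, using Proposition~\ref{prop:nexc-monos} to transport the lift from the $P_0$-level back up. Your ``rigidity lemma'' is equivalent in content to Proposition~\ref{prop:nexc-monos} (each yields the other by the base-change manipulations you describe), but your proof of it is new relative to the appendix and quite elegant: since a monomorphism $m$ makes the self-pullback square with two identities cartesian, and since the cogap of that square is $m$ itself, Theorem~\ref{thm:dual-Goodwillie-Blakers-Massey} upgrades ``$m$ is a $P_d$-equivalence'' to ``$m$ is a $P_{2d+1}$-equivalence,'' and the bootstrap from $d=0$ terminates because $F$ and $G$ are $n$-excisive, hence $d$-excisive for $d \geq n$, so a $P_d$-equivalence between them is invertible. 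The trade-off: your route invokes the full strength of Theorem~\ref{thm:dual-Goodwillie-Blakers-Massey}, hence Theorem~\ref{thm:adjunction-tricks}(4) and the generalized dual Blakers--Massey theorem of \cite{ABFJ:gbm}, whereas the appendix only needs Theorem~\ref{thm:adjunction-tricks}(1) plus elementary truncation arguments; in exchange you dispense with the $\tau_0$/discreteness analysis entirely, and your (cover, mono)-factorization argument for (2, $\Leftarrow$) is cleaner than the paper's lifting diagram (and correctly avoids circularity, since the appendix is used only downstream of the Blakers--Massey theorems). One step you should make explicit: in forming $u : F \to F' := G \times_{P_0 G} P_0 F$ you assert $u$ is a monomorphism, but monomorphisms are not left-cancellable in an $\infty$-topos (consider $\term \to S^1 \to \term$: the composite is monic, the first map is not); the claim does hold here because $F' \to G$ is itself a monomorphism, being a base change of $P_0 f$, which is monic since $P_0$ is left exact, and when $g$ and $g \circ u$ are both monic, so is $u$.
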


\begin{remark}
  \label{rem:locality-of-epi-mono}
  We invite the reader to observe the similarity between 
  Proposition~\ref{prop:epi-mono-topos} and Theorem~\ref{thm:epi-mono-nexc}.
  The category $[\cC, \cS]^{(0)}$ is equivalent to the category
  of spaces and the functor $P_0$ is equivalent to the
  evaluation functor $F\mapsto F(\term)$. It follows
  from the theorem that a map $f:X\to Y$ in $[\cC, \cS]\exc{n}$
  is a cover if and only if the map $f(\term):X(\term)\to Y(\term)$
  is a cover in the category of spaces.
\end{remark}

We begin with some generalities: let us suppose that we have a
left exact localization $P : \cE \to \cF$ with fully faithful
right adjoint $i : \cF \hookrightarrow \cE$.  We will write
$\tau_0^\cE$ and $\tau_0^\cF$ for the $0$-truncation functors
of $\cE$ and $\cF$ respectively.  Now, it follows from the fact
that $P$ preserves colimits that $P$ also commutes with $0$-truncation.
That is
\begin{equation}
  \label{eq:local-trunc}
  P \tau_0^\cE \simeq \tau_0^\cF P
\end{equation}
On the other hand, the inclusion $i$ does \emph{not}, in general,
preserve colimits and hence when we identify $\cF$ with a full
subcategory of $\cE$, we must distinguish between these two distinct
operations.

Specializing to the case at hand, the following notation will be
convenient:

\begin{definition}
In what follows, we write $\tau_0$ for $0$-truncation in $[\cC, \cS]$
and $\tau^{(n)}_0$ for the $0$-truncation functor in the $n$-excisive
localization $[\cC, \cS]\exc{n}$.  
\end{definition}

\begin{remark}
  \label{rem:0-trunc-const}
  The case $n=0$ here merits special attention.  In this case, the
  functor $i : [\cC, \cS]\exc{0} \hookrightarrow [\cC, \cS]$ admits both
  a left and right adjoint given respectively by left and right Kan
  extension along the inclusion of the terminal object
  $\term \hookrightarrow \cC$.  As a consequence, the $0$-truncation
  functors $\tau_0\exc{0}$ and $\tau_0$ \emph{do} coincide on the
  essential image of $i$, which can be identified with the
  \emph{constant} functors.
\end{remark}

\begin{lemma}\label{lem:one-excisive-discrete}
   A discrete functor $F:\cC \to  \cS$ is $1$-excisive if and only it is
  constant. 
\end{lemma}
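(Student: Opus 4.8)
The plan is to identify ``constant'' with ``$0$-excisive'' and then show that a discrete $1$-excisive functor is forced to be $0$-excisive. First I would record, from the formulas defining $T_0$ and $P_0$, that $K\join\{1\}\simeq\term$, so that $P_0F$ is the constant functor at $F(\term)$ and the localization map $F\to P_0F$ is objectwise the map $F(K\to\term)\colon F(K)\to F(\term)$. Thus $F$ is $0$-excisive, equivalently isomorphic to a constant functor, precisely when each $F(K\to\term)$ is an isomorphism. The backward implication is then immediate: a constant functor sends every cube to a constant cube, which is cartesian, so it is $n$-excisive for every $n$, in particular $1$-excisive, and discreteness plays no role. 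It remains to treat the forward direction, and since $F$ is discrete all the spaces $F(K)$ are sets, so I only need to prove that each $F(K\to\term)$ is a bijection of sets.

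For injectivity I would use, for an arbitrary map $f\colon A\to B$, the strongly cocartesian square obtained as the pushout of $B\xleftarrow{f}A\xrightarrow{f}B$, with terminal corner $B\sqcup_AB$. Since $F$ is $1$-excisive it carries this to a pullback square of sets, so the gap map $F(A)\to F(B)\times_{F(B\sqcup_AB)}F(B)$, which sends $a\mapsto(F(f)a,F(f)a)$, is a bijection; being injective, it forces $F(f)$ itself to be injective. Hence $F(f)$ is a monomorphism of sets for \emph{every} map $f$ of $\cC$, and in particular for all the maps appearing below.

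For surjectivity I would feed the suspension $\Sigma K:=\term\sqcup_K\term$ into $1$-excision. Writing $i_1,i_2\colon\term\to\Sigma K$ for the two poles and $\pi\colon\Sigma K\to\term$ for the terminal map, each $i_j$ is a section of $\pi$, so $F(i_j)$ is a section of $F(\pi)$. But $F(\pi)$ is injective by the previous step, and a map of sets that is injective and admits a section is a bijection with a unique section; therefore $F(i_1)=F(i_2)=F(\pi)^{-1}$. Now $1$-excision applied to the suspension pushout makes
\[
  F(K)\ \xrightarrow{\ \sim\ }\ F(\term)\times_{F(\Sigma K)}F(\term)
\]
a bijection via $x\mapsto\bigl(F(K\to\term)x,\,F(K\to\term)x\bigr)$. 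Since $F(i_1)=F(i_2)$ is injective, the fibre product on the right is exactly the diagonal copy of $F(\term)$, so this bijection exhibits $F(K\to\term)$ as a bijection onto $F(\term)$.

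Combining the two steps, $F(K\to\term)$ is a bijection for every $K$, hence $F\to P_0F$ is an isomorphism and $F$ is constant. The one genuinely delicate point is surjectivity, and more precisely the equality $F(i_1)=F(i_2)$ of the maps induced by the two poles of the suspension: this equality fails at the level of $\cC$ itself, and the whole argument hinges on promoting the abstract injectivity of $F(\pi)$ — which is available only because discreteness turns ``cartesian'' into an honest pullback of sets — into the rigidity that collapses the fibre product to its diagonal.
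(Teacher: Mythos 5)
Your proof is correct, and while it pivots on the same diagram as the paper --- the suspension square $\Sigma K = \term \sqcup_K \term$, with discreteness used to turn homotopy-cartesian squares into honest pullbacks of sets --- it reaches the conclusion by a genuinely different (and more complete) mechanism. The paper's proof observes that $F(\term)\to F(\Sigma K)$ is a split monomorphism into a discrete object, hence monic, and then asserts that cartesianness of the square makes $F(K)\to F(\term)$ invertible. Taken literally, monicity of the two pole maps only identifies $F(K)$ with the equalizer $\{x\in F(\term) : F(i_1)x = F(i_2)x\}$, i.e.\ it yields that $F(K)\to F(\term)$ is \emph{monic}; invertibility requires the further input that $F(i_1)=F(i_2)$ (equivalently, that one pole factors through the other), which does not follow from monicity alone, since a monic section of a non-injective retraction need not be unique. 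This is exactly the point you isolate and settle: your auxiliary lemma that $F(f)$ is injective for \emph{every} map $f$, proved by applying $1$-excision to the codiagonal square with corner $B\sqcup_A B$ (note that for $f=\pi\colon\Sigma K\to\term$ this codiagonal square is precisely the suspension square of $\Sigma K$, so only suspension squares are really used), gives injectivity of $F(\pi)$, and an injective map of sets admitting a section is a bijection with a unique section, forcing $F(i_1)=F(i_2)=F(\pi)^{-1}$ and collapsing the fibre product to the diagonal. Thus your route costs one extra easy lemma but fully justifies the step that the paper's one-line conclusion elides; your treatment of the converse and of the identification of constant with $0$-excisive via $P_0$ is routine and matches what the paper leaves implicit.
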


\begin{proof}
If $F:\cC \to  \cS$ is $1$-excisive, then the following square
is cartesian for every $K \in \cC$:
 \[
  \begin{tikzcd}
    F(K) \ar[r] \ar[d] \pbmark & F(\term) \ar[d, rightarrowtail] \\
    F(\term) \ar[r, rightarrowtail] & F(\Sigma K)
  \end{tikzcd}
  \]
But the map $F(\Sigma K)\to F(\term)$ is a left inverse
of the map $F(\term)\to F(\Sigma K)$, since the map $\Sigma K\to \term$
is a left inverse of the map $\term \to \Sigma K$.
Hence the map $F(\term)\to F(\Sigma K)$ is monic, since $F(\Sigma K)$
is discrete by hypothesis. 
It follows that the map $F(K)\to F(\term)$
is invertible since the square above is cartesian.
\end{proof} 

The preceding lemma allows us to calculate the action of the
$0$-truncation functor $\tau_0\exc{1}$ in the category of $1$-excisive
functors.  The result asserts that the $0$-truncation of a
$1$-excisive $F$ is constant with value the $0$-truncation of the
space $F(\term)$.

\begin{lemma}\label{lem:pi-naught-comm}
For $F \in [\cC, \cS]\exc{1}$, we have
  \[ \tau_0^{(1)} F = \tau_0 P_0 F \]
\end{lemma}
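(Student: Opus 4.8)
The plan is to prove that for a $1$-excisive functor $F$, the $0$-truncation $\tau_0^{(1)} F$ (computed inside $[\cC,\cS]^{(1)}$) equals $\tau_0 P_0 F$, where the right-hand side is a constant functor. The key idea is that both sides are discrete $1$-excisive functors, and by Lemma~\ref{lem:one-excisive-discrete} such objects are necessarily constant; once we know both are constant, it suffices to compare their values, which reduces to a computation in spaces.

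First I would analyze the right-hand side. Since $P_0 F$ is by definition the constant functor with value $F(\term)$ (recall that $P_0$ sends $F$ to its value at the terminal object, extended constantly), applying the ordinary $0$-truncation $\tau_0$ in $[\cC,\cS]$ gives, by the pointwise formula $(\tau_0 F)(K) = \pi_0(F(K))$, the constant functor with value $\tau_0(F(\term)) = \pi_0(F(\term))$. In particular $\tau_0 P_0 F$ is discrete and constant, hence a fortiori $1$-excisive. So the right-hand side is a discrete $1$-excisive functor.

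Next I would identify $\tau_0^{(1)} F$. The functor $\tau_0^{(1)}$ is left adjoint to the inclusion of discrete objects \emph{within} $[\cC,\cS]^{(1)}$, so $\tau_0^{(1)} F$ is the universal discrete $1$-excisive functor under $F$. By Lemma~\ref{lem:one-excisive-discrete}, any discrete $1$-excisive functor is constant; therefore $\tau_0^{(1)} F$ is a constant functor receiving a map from $F$, and it is the universal such. A constant functor is determined by its value, and maps from $F$ to the constant functor on a set $S$ correspond to maps of spaces $F(\term)\to S$ with $S$ discrete (using that evaluation at $\term$ is the relevant adjoint on constant functors, cf.\ Remark~\ref{rem:0-trunc-const}). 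Thus the universal discrete constant target is the constant functor on $\pi_0(F(\term)) = \tau_0(F(\term))$, which is exactly $\tau_0 P_0 F$.

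The main obstacle, and the point requiring care, is making precise the claim that $\tau_0^{(1)} F$ really is \emph{constant} rather than merely discrete, and that its universal property matches that of $\tau_0 P_0 F$. This is where Lemma~\ref{lem:one-excisive-discrete} does the essential work: it collapses ``discrete $1$-excisive'' to ``constant,'' so that the $0$-truncation in $[\cC,\cS]^{(1)}$ factors through the subcategory of constant functors, on which truncation is computed simply by truncating the value space. Once this reduction is in place, the equality $\tau_0^{(1)} F = \tau_0 P_0 F$ follows by comparing universal properties (both corepresent discrete quotients of the space $F(\term)$) or, equivalently, by the commutation relation~\eqref{eq:local-trunc} applied to the localization $P_0$ together with the identification $P_0 \tau_0^{(1)} F = \tau_0^{(1)} F$ (since $\tau_0^{(1)} F$ is already $0$-excisive, being constant).
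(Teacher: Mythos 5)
Your proof is correct, and it shares its backbone with the paper's: both arguments lean on Lemma~\ref{lem:one-excisive-discrete} to collapse ``discrete and $1$-excisive'' to ``constant'', so that everything reduces to comparing values at $\term$. Where you genuinely diverge is in how that comparison is made. The paper evaluates at $\term$ and runs the formal chain $\tau_0^{(1)}(F)(\term) = P_0\tau_0^{(1)}(F)(\term) = \tau_0^{(0)}P_0(F)(\term) = \tau_0 P_0(F)(\term)$, using the commutation relation~\eqref{eq:local-trunc} for the left exact localization $P_0 : [\cC,\cS]\exc{1} \to [\cC,\cS]\exc{0}$ together with Remark~\ref{rem:0-trunc-const}. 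You instead corepresent both sides: using the adjunction between evaluation at $\term$ and the constant-functor embedding (valid since $\term$ is terminal in $\cC$), maps from $F$ into a constant functor on a discrete $S$ are maps $F(\term)\to S$, so the universal discrete $1$-excisive functor under $F$ is the constant functor on $\pi_0(F(\term))$, which is exactly $\tau_0 P_0 F$. Your route is more hands-on and self-contained --- it never needs~\eqref{eq:local-trunc}, replacing it with an explicit universal-property computation --- while the paper's chain is shorter and illustrates a mechanism (truncation commutes with any left exact localization) that works verbatim in greater generality; notably, you also record this second route as an equivalent alternative at the end, and your appeal there to $P_0\tau_0^{(1)}F = \tau_0^{(1)}F$ (constancy of $\tau_0^{(1)}F$) is precisely the first equality in the paper's chain. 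The only point you might have made fully explicit is that discreteness of $\tau_0^{(1)}F$ as an object of $[\cC,\cS]\exc{1}$ implies pointwise discreteness (test against the localized representables $P_1 R^K$), which is needed to invoke Lemma~\ref{lem:one-excisive-discrete}; but the paper elides this in exactly the same way, so it is not a gap relative to the original.
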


\begin{proof}
  Note that the functor $\tau\exc{1}_0 F$ is both discrete and
  $1$-excisive by definition.  According to
  Lemma~\ref{lem:one-excisive-discrete}, then, it is constant.
  The functor $\tau_0 P_0 F$ is also constant and hence, to show
  that the two agree, it suffices to show they agree after
  evaluation at $\term \in \cC$.  But we have
  \[ \tau_0^{(1)}(F) (\term) = P_0\tau_0^{(1)}(F) (\term) = \tau_0\exc{0} P_0(F) (\term) = \tau_0 P_0(F) (\term) \]
  Where the first equality is by the definition of $P_0$, the
  second is an application of \ref{eq:local-trunc} to the localization
  $P_0 : [\cC, \cS]\exc{1} \to [\cC, \cS]\exc{0}$, and the last
  follows from Remark~\ref{rem:0-trunc-const}.
\end{proof}


\begin{proposition}
  \label{prop:nexc-monos}
  Every $n$-excisive monomorphism is $0$-excisive.
\end{proposition}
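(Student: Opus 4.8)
The plan is to use the characterisation of $n$-excisive maps by fiberwise orthogonality together with the adjunction formula of Proposition~\ref{prop:the-strange-adjunction}. First note that since $F$ and $G$ are $n$-excisive we have $F = P_nF$ and $G = P_nG$, so $f$ is automatically $P_n$-local, i.e.\ $n$-excisive as a map. By Theorem~\ref{thm:n-excisive-equals-Pn-local}(1), proving that $f$ is $0$-excisive amounts to showing $R^a \sperp f$ for every single map $a : K \to L$ in $\cC$, whereas the hypothesis provides $R^{h_0} \pp \cdots \pp R^{h_n} \sperp f$ for \emph{every} family of $n+1$ maps. The idea is to fix a single map $a$ and feed in the constant family $h_0 = \cdots = h_n = a$, which gives $(R^a)^{\pp(n+1)} \sperp f$.

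The point is then to descend from the $(n+1)$-fold pushout power to a single factor. By Propositions~\ref{prop:the-strange-adjunction} and~\ref{prop:fiberwisediagonalorth} this says exactly that the iterated fiberwise diagonal $\fwd{R^a}{\fwd{R^a}{\cdots \fwd{R^a}{f}}}$ (with $n+1$ copies) is an isomorphism, and I want to conclude that already the single $\fwd{R^a}{f}$ is an isomorphism. The heart of the argument is therefore the general fact, which I would isolate as a lemma,
\[
\text{$f$ a monomorphism and } u^{\pp(n+1)} \sperp f \;\Longrightarrow\; u \sperp f ,
\]
proved in contrapositive form by the fibre calculus. Since $f$ is a monomorphism, each fibre $\fib_y f$ is $(-1)$-truncated, and the fibrewise computation following Definition~\ref{def:fiberwise-diagonal} identifies the fibre of $\fwd{u}{f}$ at $(b,y)$ with the $(\fib_b u)$-diagonal $\fib_y f \to \intmap{\fib_b u}{\fib_y f}$. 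A diagonal of a $(-1)$-truncated object is an isomorphism \emph{unless} that object is empty and $\fib_b u$ is empty as well; hence $\fwd{u}{f}$ fails to be invertible precisely when there is a (generalised) point $(b,y)$ at which $\fib_b u$ and $\fib_y f$ are simultaneously empty.

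To transport such a defect to the pushout power I would use Example~\ref{example-po}(\ref{fib-join}): the fibre of $(R^a)^{\pp(n+1)}$ over a point $(b_0,\dots,b_n)$ is the join $\fib_{b_0} R^a \join \cdots \join \fib_{b_n} R^a$, and a join of empty objects is empty. Thus, given a point $b$ with $\fib_b R^a$ empty that witnesses $R^a \not\sperp f$, the diagonal point $(b,\dots,b)$ has empty fibre for $(R^a)^{\pp(n+1)}$; paired with the same $y$ this produces a common empty fibre, so $(R^a)^{\pp(n+1)} \not\sperp f$, contradicting $n$-excisiveness. Running this for every $a$ yields $R^a \sperp f$ for all $a$, i.e.\ $f$ is $0$-excisive. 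The main obstacle is making the phrase ``point with empty fibre'' precise in the topos $[\cC,\cS]$ rather than in $\cS$, since invertibility of $\fwd{u}{f}$ is detected only after base change to a generating family (equivalently, levelwise in $\cC$). I would handle this by passing to the internal logic, replacing ``$\fib = \emptyset$'' by the vanishing of the relevant support and using that $\sperp$ is stable under base change (Remark~\ref{remarksonfiberwiseorth}(1)) to reduce the join-of-empties computation to the external statement, which is the elementary fact already visible in $\cS$.
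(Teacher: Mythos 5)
Your route is genuinely different from the paper's and, modulo the one step you yourself flag, it works. The paper argues by induction on $n$: the inductive step uses Theorem~\ref{thm:adjunction-tricks}(1) (hence Proposition~\ref{prop:the-strange-adjunction}) to show that $\fwd{w^K_1}{f}$ is an $n$-excisive monomorphism, and the base case $n=1$ rests on the $0$-truncation machinery of the appendix (Proposition~\ref{prop:epi-mono-topos}, Lemmas~\ref{lem:one-excisive-discrete} and~\ref{lem:pi-naught-comm}: discrete $1$-excisive functors are constant). You instead descend from the $(n+1)$-fold pushout power to a single factor in one stroke, via the lemma that for a monomorphism $f$ one has $u^{\pp(n+1)}\sperp f \Rightarrow u\sperp f$. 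That lemma is in fact true in any topos, and the gap you acknowledge --- making ``point with empty fibre'' precise in $[\cC,\cS]$ --- can be closed without internal logic, using only tools already in the paper. Pull $u^{\pp(n+1)}$ back along the diagonal $B\to B^{\times(n+1)}$: by Example~\ref{example-po}(5) this is the iterated fiberwise join $u\join_B\cdots\join_B u$, and by Remark~\ref{remarksonfiberwiseorth}(1) it still satisfies $\sperp f$. Now observe that against a monomorphism, fiberwise orthogonality sees only the image of the left-hand map: factor $u=i\circ e$ with $e$ a cover and $i$ monic; then $e\perp f$ (covers are by definition orthogonal to monomorphisms), so $u\perp f \iff i\perp f$ by the standard cancellation property of orthogonal classes, and since image factorizations in a topos are stable under base change, Proposition~\ref{prop:equivfwortho}(7) upgrades this to $u\sperp f \iff i\sperp f$. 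Finally the image of $u\join_B\cdots\join_B u$ coincides with the image of $u$ --- this is the rigorous form of your ``join of empties is empty'' --- so $u\sperp f$ follows. Applying the lemma with $u=w^K_1$, using $w^K_{n+1}=(w^K_1)^{\pp(n+1)}$ and Theorem~\ref{thm:n-excisive-equals-Pn-local}(2) (or, as you do, constant families in characterization (1)), yields the proposition with no induction and no appendix input. What each approach buys: the paper's induction produces byproducts of independent interest (the structure of discrete excisive functors, the identification $\tau_0^{(1)}=\tau_0 P_0$), whereas yours isolates a clean, reusable cancellation principle special to monomorphisms. Two minor corrections: your opening reduction to $F=P_nF$ and $G=P_nG$ is unnecessary --- the statement concerns maps that are monomorphisms and $n$-excisive in the sense of Definition~\ref{defn:n-excisive-maps}, not necessarily maps between $n$-excisive functors, and your core argument uses only that; and your dichotomy for the diagonal of a $(-1)$-truncated object is literally valid only pointwise in $\cS$, which is exactly why the image-theoretic reformulation above, rather than an appeal to (generalized) points, is the right form of the argument.
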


\begin{proof}
  The proof is by induction on $n \geq 0$. The case $n=0$ is obvious.
  Let us suppose $n = 1$.
  So let $f : F \to G$ be a $1$-excisive monomorphism.
  The map $P_1f$ is monic, since the functor $P_1$ is a left exact localization.
  The following square is cartesian, since $f$ is 1-excisive.
  \[
    \begin{tikzcd}
      F \ar[r] \ar[d, "f"', rightarrowtail] \pbmark & P_1 F \ar[d, "P_1 f", rightarrowtail] \\
      G \ar[r] & P_1 G 
    \end{tikzcd}
  \]
  Hence it suffices to show that $P_1 f$ is $0$-excisive.
  Hence we may suppose, without loss of generality, that $F$ and $G$ are
  in fact $1$-excisive functors. Consider the cube:
  \[
    \begin{tikzcd}
      F \ar[dd, rightarrowtail] \ar[dr] \ar[rr] & & P_0 F \ar[dd, rightarrowtail] \ar[dr] & \\
      & \tau^{(1)}_0 F \ar[rr, crossing over, "\simeq", near start] & & \tau_0 P_0 F \ar[dd, rightarrowtail] \\
      G \ar[rr] \ar[dr] & & P_0 G \ar[dr] & \\
      & \tau^{(1)}_0 G \ar[rr, "\simeq", near start] \ar[from=uu, crossing over, rightarrowtail] & & \tau_0 P_0 G \\
    \end{tikzcd}
  \]
  Let us show that the back face is a pullback.
All of the vertical maps are monomorphisms since the functors $P_0$, $\tau_0$ and
  $\tau^{(1)}_0$ preserve them.  Both the left and the right face are
  pullbacks by Proposition~\ref{prop:epi-mono-topos}.  By Lemma
  \ref{lem:pi-naught-comm}, the front two horizontal maps are
  in fact isomorphisms, since $F$ and $G$ are 1-excisive. Consequently, the back face is a pullback,
  which says that $f$ is $0$-excisive.

  For the inductive step, let $f : F \to G$ be a $(n+1)$-excisive
  monomorphism.  Note that the functor $\fwd{w_K}{-}$ preserves
  monomorphisms. So, for all $K$, $\fwd{w_K}{f}$ is a
  monomorphism. But it is also $n$-excisive by
  Theorem~\ref{thm:adjunction-tricks}.(1).  By the induction
  hypothesis, it is then $0$-excisive.  This, in turn, shows that $f$
  is $1$-excisive. Then the case $n=1$ implies that $f$ is also
  $0$-excisive.
\end{proof}

\begin{proof}[Proof of Theorem~\emph{\ref{thm:epi-mono-nexc}}]
(1 $\Rightarrow$). This is immediate since $P_0$ preserves monomorphisms and the pullback expresses just the statement that $f$ is $0$-excisive.

  (1 $\Leftarrow$).  Monomorphisms are always stable by pullback.

  (2 $\Rightarrow$). The functor $P_0$ preserves covers because it is a localization.

  (2 $\Leftarrow$).  Note that $f$ is a cover if and only if it is orthogonal
  to every monomorphism in $[\cC, \cS]\exc{n}$.  So let $g : H \to K$ be such a monomorphism
  and consider a lifting problem as follows:
  \[
  \begin{tikzcd}
    F \ar[r] \ar[d, "f"'] & H \ar[d, "g", tail] \\ 
    G \ar[r] & K
  \end{tikzcd}
  \]
Note that since $g$ is a monomorphism, it is enough to
  show that a lift exists, as its uniqueness is automatic.  Now apply
  the functor $P_0$ to obtain
  \[
  \begin{tikzcd}
    P_0 F \ar[r] \ar[d, "P_0 f"', two heads] & P_0 H \ar[d, "P_0 g", rightarrowtail] \\ 
    P_0 G \ar[r] \ar[ur, "\exists !", dotted] & P_0 K
  \end{tikzcd}
  \]
  Observe that the left map is a cover by assumption. Since
  $P_0$ preserves monomorphisms, this square has a unique lift.  But
  now composition of our lift with the map $p_0G : G \to P_0 G$ yields
  the lift shown in the diagram:
  \[
  \begin{tikzcd}
    F \ar[r] \ar[d, "f"'] & H \ar[d, "g", tail] \ar[r] \pbmark & P_0 H \ar[d, "P_0 g", tail] \\ 
    G \ar[r] \ar[urr, "\exists !", dotted, near start] & K \ar[r] & P_0 K
  \end{tikzcd}
  \]
On the other hand, Proposition~\ref{prop:nexc-monos} asserts that the right hand square is a pullback. Hence we have an induced unique lift to the original problem.  This shows that $f$ is a cover.
\end{proof}


\end{document}